\let\OLDthebibliography\thebibliography
\renewcommand\thebibliography[1]{
 \OLDthebibliography{#1}
 \setlength{\parskip}{1.4pt}
 \setlength{\itemsep}{0pt plus 0.2ex}
}
\newtheorem{theorem}{Theorem}[section]
\newtheorem{proposition}[theorem]{Proposition}
\newtheorem{corollary}[theorem]{Corollary}
\newtheorem{lemma}[theorem]{Lemma}
\newtheorem{notation}[theorem]{Notation}
\newtheorem*{thrm}{Theorem B}
\newtheorem*{thrA}{Theorem A}
\newtheorem{definition}[theorem]{Definition}
\newtheorem{example}[theorem]{Example}
\newtheorem{remark}[theorem]{Remark}
\newcommand{\Mdef}[2]{\newcommand{#1}{{#2}}}
\Mdef{\bA}{\mathbb{A}}
\Mdef{\bB}{\mathbb{B}}
\Mdef{\bC}{\mathbb{C}}
\Mdef{\bD}{\mathbb{D}}
\Mdef{\bE}{\mathbb{E}}
\Mdef{\bF}{\mathbb{F}}
\Mdef{\bG}{\mathbb{G}}
\Mdef{\bH}{\mathbb{H}}
\Mdef{\bI}{\mathbb{I}}
\Mdef{\bJ}{\mathbb{J}}
\Mdef{\bK}{\mathbb{K}}
\Mdef{\bL}{\mathbb{L}}
\Mdef{\bM}{\mathbb{M}}
\Mdef{\bN}{\mathbb{N}}
\Mdef{\bO}{\mathbb{O}}
\Mdef{\bP}{\mathbb{P}}
\Mdef{\bQ}{\mathbb{Q}}
\Mdef{\bR}{\mathbb{R}}
\Mdef{\bS}{\mathbb{S}}
\Mdef{\bT}{\mathbb{T}}
\Mdef{\bU}{\mathbb{U}}
\Mdef{\bV}{\mathbb{V}}
\Mdef{\bW}{\mathbb{W}}
\Mdef{\bX}{\mathbb{X}}
\Mdef{\bY}{\mathbb{Y}}
\Mdef{\bZ}{\mathbb{Z}}
\Mdef{\cL}{\mathcal{L}}
\Mdef{\cF}{\mathcal{F}}
\Mdef{\mcA}{\mathcal{A}}
\Mdef{\mcB}{\mathcal{B}}
\Mdef{\mcC}{\mathcal{C}}
\Mdef{\mcD}{\mathcal{D}} % Something funny about \cD.
\Mdef{\mcE}{\mathcal{E}}
\Mdef{\mcF}{\mathcal{F}}
\Mdef{\mcG}{\mathcal{G}}
\Mdef{\mcH}{\mathcal{H}} % There's something funny about \cH: it
\Mdef{\mcI}{\mathcal{I}}
\Mdef{\mcJ}{\mathcal{J}}
\Mdef{\mcK}{\mathcal{K}}
\Mdef{\mcL}{\mathcal{L}}% There's something funny about \cL: it
\Mdef{\mcM}{\mathcal{M}}
\Mdef{\mcN}{\mathcal{N}}
\Mdef{\mcO}{\mathcal{O}}
\Mdef{\mcP}{\mathcal{P}}
\Mdef{\mcQ}{\mathcal{Q}}
\Mdef{\mcR}{\mathcal{R}}% There's something funny about \cR: it
\Mdef{\mcS}{\mathcal{S}}
\Mdef{\mcT}{\mathcal{T}}
\Mdef{\mcU}{\mathcal{U}}
\Mdef{\mcV}{\mathcal{V}}
\Mdef{\mcW}{\mathcal{W}}
\Mdef{\mcX}{\mathcal{X}}
\Mdef{\mcY}{\mathcal{Y}}
\Mdef{\mcZ}{\mathcal{Z}}
\Mdef{\At}{\tilde{A}}
\Mdef{\Bt}{\tilde{B}}
\Mdef{\Ct}{\tilde{C}}
\Mdef{\Et}{\tilde{E}}
\Mdef{\Ht}{\tilde{H}}
\Mdef{\Kt}{\tilde{K}}
\Mdef{\Lt}{\tilde{L}}
\Mdef{\Mt}{\tilde{M}}
\Mdef{\Nt}{\tilde{N}}
\Mdef{\Pt}{\tilde{P}}
\newcommand{\pscr}{{ \mathscr{P} }}
\newcommand{\torus}{{ \mathbb{T} }}
\DeclareMathOperator{\ch}{Ch}
\DeclareMathOperator{\Sp}{Sp}
\def\endash{\mathchar"707B}
\newcommand{\cell}{\endash \textnormal{cell} \endash}
\newcommand{\dmod}{\textnormal{dmod}}
\newcommand{\leftmod}{\endash \textnormal{mod}}
\newcommand{\leftdmod}{\endash \textnormal{dmod}}
\newcommand{\leftmodin}{\endash \textnormal{mod} \endash}
\newcommand{\leftdmodin}{\endash \textnormal{dmod} \endash}
\newcommand{\Einfty}{\textnormal{E}_\infty}
\newcommand{\EGinfty}{\textnormal{E}^G_\infty}
\newcommand{\Eoneinfty}{\textnormal{E}^1_\infty}
\newcommand{\einf}{\textnormal{E}_\infty}
\newcommand{\co}{\colon \!}
\newcommand{\ho}{\textnormal{Ho}}
\newcommand{\smashprod}{\wedge}
\DeclareMathOperator{\colim}{colim}
\newcommand{\tensor}{\otimes}
\newcommand{\adjunction}[4]{
\xymatrix{
#1:#2 \ar@<0.7ex>[r] &
\ar@<0.7ex>[l] #3:#4
}}
\newcommand{\C}{\mathbb{C}}
\newcommand{\Q}{\mathbb{Q}}
\newcommand{\T}{\mathbb{T}}
\newcommand{\Z}{\mathbb{Z}}
\newcommand{\cA}{\mathcal{A}}
\newcommand{\cC}{\mathcal{C}}
\newcommand{\cE}{\mathcal{E}}
\newcommand{\cK}{\mathcal{K}}
\newcommand{\cM}{\mathcal{M}}
\newcommand{\cN}{\mathcal{N}}
\newcommand{\cP}{\mathcal{P}}
\newcommand{\cQ}{\mathcal{Q}}
\newcommand{\cV}{\mathcal{V}}
\newcommand{\gl}{\mathcal{Gl}}
\newcommand{\bbS}{\mathbb{S}}
\newcommand{\PcOC}{P\cOC}
\newcommand{\Hom}{\mathrm{Hom}}
\newcommand{\cOCFhat}{(\cOC)_{\cF}^{\wedge}}
\newcommand{\Iinj}{I\endash\mathrm{inj}}
\newcommand{\Jinj}{J\endash\mathrm{inj}}
\newcommand{\Icof}{I\endash\mathrm{cof}}
\newcommand{\piT}{\pi^{\T}}
\newcommand{\lra}{\longrightarrow}
\newcommand{\spO}{\textnormal{Sp}}
\newcommand{\TSP}{\mathbb{T}\textnormal{Sp}}
\newcommand{\spS}{\textnormal{Sp}^\Sigma}
\Mdef{\infl}{\mathrm{inf}}
\Mdef{\defl}{\mathrm{def}}
\Mdef{\res}{\mathrm{res}}
\Mdef{\ind}{\mathrm{ind}}
\Mdef{\coind}{\mathrm{coind}}
\Mdef{\Top}{\mathsf{Top}}
\Mdef{\sset}{\mathsf{sSet}}
\Mdef{\Comm}{\mathsf{Comm}}
\newcommand{\algin}[2]{#1 \endash \textnormal{alg-in}\endash #2}
\newcommand{\st}{\; |\;}
\newcommand{\lspec}{\cL(\bR^\infty)\endash\TSP_\bQ}
\newcommand{\adjunct}{{ \,\,\raisebox{-0.1\height}{$\overrightarrow{\longleftarrow}$}\,\, }}
\definecolor{darkgreen}{RGB}{85,107,47}
\newcommand{\GSp}{G \spO}
\newcommand{\Sn}{\Sigma_n}
\newcommand{\lla}{\leftarrow}
\newcommand{\sm}{\wedge}
\newcommand{\Ninfty}{N_{\infty}}
\newcommand{\cO}{\mathcal{O}}
\newcommand{\Pic}{\mathrm{Pic}}
\newcommand{\Cn}{C\langle n \rangle}
\newcommand{\cOC}{\cO_C}
\newcommand{\OmegaC}{\Omega^1_C}
\newcommand{\cOCnhat}{(\cOC)_{\Cn}^{\wedge}}
\newcommand{\cOCfork}{\cOC^{\lrcorner}}
\newcommand{\PcOCfork}{\PcOC^{\lrcorner}}
\newcommand{\cOCforkmod}{\cOCfork \endash \dmod}
\newcommand{\GcOCforkmod}{\Gamma\cOCfork \leftmod}
\newcommand{\PcOCforkmod}{\PcOCfork \endash \dmod}
\newcommand{\Rafork}{\bbSfork_a}
\newcommand{\Raforkmod}{\bbSfork_a \leftmod}
\newcommand{\bbSfork}{\bbS^{\lrcorner}}
\newcommand{\Rfork}{R^{\lrcorner}}
\newcommand{\siftyV}[1]{S^{\infty V(#1)}}
\newcommand{\cOcF}{\cO_{\cF}}
\newcommand{\cEi}{\cE^{-1}}
\newcommand{\cOCFh}{\cO_{C(\cF)}^{\wedge}} 
\newcommand{\efp}{E\cF_+}
\newcommand{\elr}[1]{E\langle #1 \rangle}
\newcommand{\jcell}{\textnormal{cell} \endash}
\newcommand{\fork}{\lrcorner}
\newcommand{\tF}{\cEi\cOcF}
\newcommand{\bbK}{\mathbb{K}}
\title[An algebraic model for rational na\"{i}ve-commutative ring $SO(2)$--spectra]{An algebraic model for rational na\"{i}ve-commutative ring $SO(2)$--spectra and equivariant elliptic cohomology}
\author[Barnes]{David Barnes}
\address[Barnes]{Pure Mathematics Research Centre, Queen's University Belfast, UK}
\email{d.barnes@qub.ac.uk}
\author[Greenlees]{J.P.C. Greenlees}
\address[Greenlees]{Warwick Mathematics Institute, Coventry, UK}
\email{john.greenlees@warwick.ac.uk}
\author[K\k{e}dziorek]{Magdalena K\k{e}dziorek}
\address[K\k{e}dziorek]{Mathematical Institute, Utrecht University, The Netherlands}
\email{m.kedziorek@uu.nl}
\begin{document}

\begin{abstract}
Equipping a non-equivariant topological $\Einfty$--operad with the trivial $G$--action gives
an operad in $G$--spaces. For a $G$-spectrum, being an algebra over
this operad does not provide any multiplicative norm maps on homotopy
groups. Algebras over this operad are called
na\"{i}ve--commutative ring $G$--spectra.
In this paper we take $G=SO(2)$ and we
show that commutative algebras in
the algebraic model for rational $SO(2)$--spectra model
rational na\"{i}ve--commutative ring $SO(2)$--spectra. In particular,
this applies to show that the  $SO(2)$-equivariant
cohomology associated to an elliptic curve $C$ of \cite{gre05} is represented by an
$\Einfty$--ring spectrum. Moreover, the category of modules 
over that $\Einfty$--ring spectrum is equivalent to
the derived category of sheaves over the elliptic curve $C$ with the
Zariski torsion point topology. 
\end{abstract}

\maketitle

\tableofcontents

\section{Introduction}

\subsection*{Rational equivariant cohomology theories}
We are interested in the category of rational \makebox{$G$--spectra}, where $G=SO(2)$ is the circle group and
the indexing universe 
is a complete $G$--universe $U$.
This is a model for the rational equivariant stable homotopy category
where all $G$--representation spheres are invertible.
Building on work of Greenlees and Shipley \cite{tnqcore} and Barnes \cite{barnesmonoidalmodelso2}, Barnes, Greenlees, K\c{e}dziorek  and Shipley \cite{BGKS} gave a symmetric monoidal algebraic model for the category of rational $G$--spectra, when $G=SO(2)$.
As a consequence, one obtains a model for rational ring $G$--spectra in terms of
monoids in the algebraic model. However, this does not imply analogous results
about strict commutative rational ring $G$--spectra ($\Comm$-algebras). This is because of
the well-known but surprising result that \emph{symmetric}
monoidal Quillen functors can fail to preserve \emph{commutative}
monoids (algebras for the operad $\Comm$) in the equivariant setting.

Recent work of Blumberg and Hill \cite{BlumbergHillNorms}
describes a class of commutative multiplicative structures on the
equivariant stable homotopy category. These multiplicative structures are governed by $G$--operads called $\Ninfty$--operads. Roughly speaking, such a multiplicative structure is characterised by the set of Hill-Hopkins-Ravenel norms (see \cite{HHR_annals}) which exist on the commutative algebras corresponding to it. Originally \cite{BlumbergHillNorms} considered $\Ninfty$--operads for a finite group $G$, however the definition can be extended to any compact Lie group $G$. The present paper uses only the most trivial $\Ninfty$--operad $\Eoneinfty$ which for any group $G$ is modelled by a non-equivariant topological $\Einfty$--operad inflated to the equivariant world of $G$-topological spaces.

The work of Blumberg and Hill raises a question: which level of commutative ring $\T$--spectra is modelled by commutative algebras in the algebraic model of \cite{BGKS}?

For $G$ a finite group, an understanding of which levels of commutativity are visible in the algebraic models for rational $G$---spectra was done in \cite{BGKfinite}. The new ingredient there was an analysis of which localised model structures $L_A\GSp$ can be 
right-lifted to the category of $\mcO$-algebras in $L_A\GSp$.
Recall that $L_A\GSp$ is a left Bousfield localisation at an object $A$ of the positive stable model structure on $G$-spectra and $\mcO$ is an $\Ninfty$--operad.

The work mentioned above is related to another surprising fact in equivariant homotopy theory, namely that a left Bousfield localisation of a genuinely commutative ring $G$--spectrum might fail to be commutative. Historically, the first example of that phenomenon appeared in \cite{McClureTate}, where McClure showed that for a finite group $G$ and a family $\cF$ of proper subgroups of $G$,  $\widetilde{E}\cF \simeq L_{\widetilde{E}\cF}\bS$ is not a strictly commutative ring spectrum. Here $\widetilde{E}\cF$ is the cofibre  of the natural map from a universal space $E\cF_+$ to $S^0$. Recently, more examples have been discussed in \cite{HillHopkins} and \cite{hill17}.

The failure of $\widetilde{E}\cF$ to be genuinely-commutative for a finite $G$ is related
to the fact that the restriction of $\widetilde{E}\cF$ to $H\Sp$ for any
proper subgroup $H$ of $G$ is trivial, while $\widetilde{E}\cF$ is not
contractible in $\GSp$. If $\widetilde{E}\cF$ were commutative then,  there would exist a ring map $*\simeq N_H^G\mathrm{res}_H^G \widetilde{E}\cF \lra \widetilde{E}\cF$, which gives a contradiction, since $\widetilde{E}\cF$ is not equivariantly contractible. Here $N_H^G$ denotes a norm \emph{functor} (see \cite{HHR_annals}) which is a left adjoint to the restriction $\res_H^G$ at the level of equivariant commutative ring spectra (algebras for the operad $\Comm$).

In the case of the circle, we will always take $\cF$ to be
the collection of all finite subgroups. 
Whereupon, $\widetilde{E}\cF$ has a strictly 
commutative model by 
\cite{JohnCouniversalComm}. The different behaviour in case of a circle comes from the fact that all proper subgroups are of infinite index in $SO(2)$ and the norm maps can only link subgroups $K$ and $H$ if $K\leq H$ is of finite index in $H$. We will use this observation in Section \ref{sec:algModTsp}, where we summarise the zig-zag of Quillen equivalences to the algebraic model for rational $\T$--spectra.

\subsection*{Contents of Part 1 of this paper}
Let $G=\T=SO(2)$ and write $\Eoneinfty$ for the \emph{non-equivariant} $\Einfty$--operad equipped with the trivial $\T$--action. 

The main theorem of Part 1 appears later in the paper as Theorem \ref{thm:fullcompare}.

\begin{thrA}
There is a zig-zag of Quillen equivalences
\[
\algin{\Eoneinfty}{  (\TSP_\bQ)   }
\ \simeq \
\algin{\Comm} d\mcA(\torus),
\]
where $ d\mcA(\torus)$ denotes the algebraic model for rational $\torus$-spectra from \cite{gre99, BGKS}.
\end{thrA}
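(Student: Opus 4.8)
The plan is to promote the zig-zag of symmetric monoidal Quillen equivalences $\TSP_\bQ \simeq d\mcA(\torus)$ of \cite{BGKS} to a zig-zag of Quillen equivalences between categories of operad-algebras, beginning with $\Eoneinfty$ at the topological end and transporting this operad along the functors of the zig-zag until one reaches $d\mcA(\torus)$. The guiding observation is that $\Eoneinfty$ carries the \emph{trivial} $\torus$-action, so its equivariance plays no structural role: as long as the categories in the zig-zag are tensored over $\torus$-spaces and the functors respect this, an $\Eoneinfty$-algebra is transported to an $\Eoneinfty$-algebra, and once one passes to the $\bQ$-linear world of $d\mcA(\torus)$ --- tensored over $\bQ$-chain complexes rather than over $\torus$-spaces --- the operad becomes an ordinary $E_\infty$-operad over $\bQ$. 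Since rectification holds in characteristic zero, once one establishes that $d\mcA(\torus)$ is a sufficiently well-behaved monoidal model category (in particular satisfies the commutative monoid axiom), $E_\infty$-algebras in $d\mcA(\torus)$ are identified with $\Comm$-algebras, which yields the right-hand side.

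Carrying this out requires two inputs at each stage $\mcC$ of the zig-zag. First, the model structure on $\mcC$ must right-lift along the forgetful functor to $\mcO$-algebras, where $\mcO$ is the operad appropriate to that stage ($\Eoneinfty$, its images, and finally $\Comm$); concretely one checks that $\mcO$ is admissible, which reduces to a commutative-monoid-type axiom controlling how symmetric powers of (positive) acyclic cofibrations interact with the monoidal product. This is precisely the point at which working with $\Eoneinfty$ rather than genuine $\Comm$, and with $\torus = SO(2)$ rather than a finite group, is essential: $\Eoneinfty$ encodes no Hill--Hopkins--Ravenel norms, and every proper subgroup of $SO(2)$ has infinite index, so the $\widetilde{E}\cF$-type obstructions that prevent localised categories of genuinely-commutative $G$-spectra from being model categories (compare the finite-group analysis of \cite{BGKfinite}) do not arise; the positive, localised, and twisted-diagram model structures appearing in the $\cite{BGKS}$ zig-zag all admit the lift. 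Second, once the model structures exist, the lifted adjunctions must be shown to remain Quillen equivalences: one verifies that cofibrant $\mcO$-algebras have underlying objects on which the derived functors of the zig-zag are correctly computed, and then checks the derived unit and counit on underlying objects and on free algebras, where they are weak equivalences because the underlying Quillen equivalences are.

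I expect the main obstacle to be the first input --- establishing the requisite monoidal axioms for the \emph{intermediate} model categories in the zig-zag. Unlike plain $\torus$-spectra, these are categories of localised or $\cL(\bR^\infty)$-twisted diagrams, and of sheaf-type objects over the torsion-point data underlying $d\mcA(\torus)$, whose generating (acyclic) cofibrations are considerably less transparent, so controlling their symmetric powers and checking flatness of the monoidal unit in each will be the technical heart of the argument. A secondary point needing care is the handedness of the transport: since only some of the functors in the zig-zag are functors of $\torus$-space modules, one must be explicit about which operad lives on each category, and about how the change of enrichment --- ultimately, passing to $\bQ$-chains on $\Eoneinfty$ --- converts the equivariant topological operad into the $\bQ$-linear $E_\infty$-operad that rectifies to $\Comm$.
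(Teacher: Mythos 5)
Your high-level plan --- lift the BGKS zig-zag to operad-algebras and eventually rectify $E_\infty$ to $\Comm$ --- is the paper's plan too, and most of the ingredients you name (right-lifting along the forgetful functor, preservation of cofibrant objects, positivity, trivial $\torus$-action avoiding norm obstructions) are exactly the ones the paper marshals (via Lemma \ref{lem:liftQE} from \cite{BGKfinite} and \cite{harperhess13}). But there is a genuine gap in where you place the rectification, and it is not a cosmetic choice.

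You propose to carry the topological operad $\Eoneinfty$ through the \emph{entire} zig-zag, only converting to $\Comm$ once you arrive in $d\mcA(\torus)$, on the grounds that characteristic-zero rectification identifies $E_\infty$-algebras with $\Comm$-algebras there. The problem is the stretch of the zig-zag from $H\bQ$-modules in symmetric spectra down to $\Ch(\bQ)$: the functors in the Shipley/Richter--Shipley comparison (normalisation, Dold--Kan, and the other intermediate steps) are lax but \emph{not strong} symmetric monoidal, so an operad in one enriching category is not simply transported to an operad in the next, and ``$\Eoneinfty$-algebras in $\Ch(\bQ)$'' has no canonical meaning under your scheme. The Richter--Shipley theorem you would need at that stage is stated and proved specifically for \emph{commutative} algebras, not for algebras over an arbitrary $E_\infty$-operad carried across a zig-zag of weakly monoidal Quillen pairs. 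So you cannot postpone rectification until after you have crossed into chain complexes: you have to do it before you invoke Richter--Shipley.

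The paper therefore rectifies much earlier. After taking $\torus$-fixed points (so you are now doing non-equivariant rational spectra), it applies the operad change $\Eoneinfty \rightsquigarrow \Comm$ using \cite[Lemma 6.2]{BGKfinite} (Lemma \ref{lem:einftocomm} here), which is a Quillen equivalence in the positive model structure on $\Sp_\bQ$. After that, the remaining steps --- passage to symmetric spectra, to $H\bQ$-modules, through Shipley's functors, and the formality and cellular-skeleton arguments into $d\mcA(\torus)$ --- are carried out entirely at the level of $\Comm$-algebras, where Richter--Shipley applies directly and one needs no transport of operads at all, because $\Comm$ is terminal in every symmetric monoidal category. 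If you reorganise your argument to perform the rectification at this earlier stage, the rest of your outline (admissibility via trivial $\torus$-action, forgetful functor preserving cofibrancy, preservation of cellularisations, checking Quillen equivalences on free algebras and underlying objects) lines up with the paper's proof; one simplification you will find is that the paper deliberately works with the commutative-ring diagram $\bbSfork_{\Top}$ built from $\widetilde{E}\cF$, using \cite{JohnCouniversalComm} to give it a genuinely commutative model, which keeps the whole fork-module picture inside ordinary modules over commutative rings rather than over $E_\infty$-objects.
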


In view of \cite{BlumbergHillNorms} $\Eoneinfty$--operad has the least commutative structure,
in particular it encodes the algebra structure without multiplicative norms.

Note that the operad $\Comm$ in the algebraic model doesn't encode any
additional structure beyond that of commutative algebras in the usual
sense,  and thus the category of $\Comm$--algebras in the algebraic model can only model the
rational na\"ive--commutative ring $\T$--spectra.

The present work is the first step towards obtaining an algebraic
model for $\T$--spectra that are equivariantly commutative in the
strongest sense. Notice, that in the case of a finite group $G$, there was a clear reason why it should be expected that the model categorical methods used to obtain the monoidal 
algebraic model only capture the lowest level of commutativity in the equivariant setting.
The methods for
rational $G$--spectra for a finite $G$ rely on the complete idempotent splitting using results of
Barnes \cite{barnessplitting}. The splitting operation does not preserve norms, and hence won't preserve more structured commutative objects (see for example McClure \cite{McClureTate}).

The case of $\T$ does not use the splitting techniques, instead it
uses an isotropy separation technique (known as the Tate square
or Hasse square). For suitable families of subgroups, this technique
preserves the strongest form of equivariant commutativity, so there is a good chance of obtaining an algebraic model for more structured commutative $\T$--ring spectra. In view of the present paper, that requires recognising additional structure on these commutative algebras in the algebraic model which are modelling strictly-commutative rational ring $\T$--spectra. This is work in progress.

\subsection*{Summary of the zig-zag of Quillen equivalences}

To illustrate the zig-zag of Quillen equivalences from \cite[Theorem 5.2.1]{BGKS} we present a diagram
of key steps. At the top we have our preferred model for rational $\mathbb{T}$-spectra (namely
the left Bousfield localization $L_{\bS_\bQ}\TSP$ of the category of
orthogonal  $\mathbb{T}$-spectra at the rational sphere spectrum). At the bottom we have the algebraic model
$d\mcA(\T)_{dual}$ with the dualizable model structure from \cite{barnesmonoidalmodelso2}.

The reader may wish to refer to this diagram now, but the notation will be introduced as we proceed.
In the diagram, left Quillen functors are placed on the left
and $\T=SO(2)$. A lift to the level of algebras over an $\Eoneinfty$--operad is given on
the left,  on the right there is an indication of the ambient
category.

\[\begin{array}{lr}
\xymatrix@R=1pc @C=7pc{
\algin{\Eoneinfty}{\TSP_\bQ}
\ar@<-1ex>[dd]_{\bbSfork \smashprod - } 
\\ \  \\
\algin{\Eoneinfty}{K_{\Top} \cell \bbSfork_{\Top} \leftmod}
\ar@<-1ex>[uu]_{\mathrm{pb}}
\ar@<+1ex>[dd]^{(-)^{\torus}}  
\\ \  \\
\algin{\Eoneinfty}{K_{\Top}^{\torus} \cell (\bbSfork_{\Top})^\torus \leftmod}
\ar@<+1ex>[uu]^{\zeta_{\#}}
\ar@<-1ex>[dd]_{\mathrm{change}}
\\ \ \\
\algin{\Comm}{K_{\Top}^{\torus} \cell (\bbSfork_{\Top})^\torus \leftmod}
\ar@<-1ex>[uu]_{\mathrm{of\ operads}}
\ar@<+0ex>[dd]^{\mathrm{of\ Quillen\ equivalences}} 
\\ \  \\
 \algin{\Comm}{K_t \cell \bbSfork_t \leftdmod}
\ar@<+0ex>[uu]^{\mathrm{zig-zag}}
\ar@<+0ex>[dd]^{\mathrm{of\ Quillen\ equivalences}} 
\\ \  \\
\algin{\Comm}{K_a  \cell \bbSfork_a \leftdmod} 
\ar@<+0ex>[uu]^{\mathrm{zig-zag}} 
\ar@<+1ex>[dd]^{\Gamma}  \\ \ \\
\algin{\Comm}{d \mcA(\mathbb{T})_{dual}}
\ar@<+1ex>[uu]^{\mu} 
}
& 
\xymatrix@R=1.03pc @C=7pc{
\TSP_\bQ
\ar@<-1ex>[dd]_{\bbSfork \smashprod - } & **[l] \mathrm{in\ } \TSP
\\ \  \\
 K_{\Top} \cell \bbSfork_{\Top} \leftmod
\ar@<-1ex>[uu]_{\mathrm{pb}}
\ar@<+1ex>[dd]^{(-)^{\torus}} & **[l] \mathrm{in\ } \TSP
\\ \  \\
K_{\Top}^{\torus} \cell (\bbSfork_{\Top})^\torus \leftmod
\ar@<+1ex>[uu]^{\zeta_{\#}}
\ar@<+0ex>[dd] & **[l] \mathrm{in\ Sp}
\\ \  \\ 
K_{\Top}^{\torus} \cell (\bbSfork_{\Top})^\torus \leftmod
\ar@<+0ex>[uu]^{\mathrm{Id}}
\ar@<+0ex>[dd]^{\mathrm{of\ Quillen\ equivalences}} & **[l] \mathrm{in\ Sp}
\\ \ \\
K_t \cell \bbSfork_t \leftdmod
\ar@<+0ex>[uu]^{\mathrm{zig-zag}}
\ar@<+0ex>[dd]^{\mathrm{of\ Quillen\ equivalences}} & **[l] \mathrm{in\ Ch}(\bQ)
\\ \  \\
K_a  \cell \bbSfork_a \leftdmod
\ar@<+0ex>[uu]^{\mathrm{zig-zag}}
\ar@<+1ex>[dd]^{\Gamma} & **[l] \mathrm{in\ Ch}(\bQ)
\\ \ \\
d \mcA(\mathbb{T})_{dual}
\ar@<+1ex>[uu]^{\mu} & **[l] \mathrm{in\ Ch}(\bQ)
}
\end{array}
\]

In the above the subscript `$\Top$' indicates that the
corresponding object has a topological origin, whereas the subscript
`$t$' indicates that the object is algebraic, but has been produced by
applying the results of \cite{shiHZ} and thus usually it does not have an explicit description. The subscript `$a$' indicates
that the object is algebraic in nature and has a small and explicit
description.  The symbols $\bbSfork_{(-)} $ refer to particular
$\pscr$-diagrams of rings, and the various categories $\bbSfork_{(-)} \leftmod$ are
diagrams of modules over a diagram of rings, see
Section \ref{sec:diagramsmodcat}. We use $\leftdmod$ to denote differential objects in graded modules, once we are in algebra.  These form model categories and  are cellularized (i.e., right Bousfield localized) at the sets of objects $K_{(-)}$, which at every level of
the diagram are the derived images of the usual stable generators
$\mathbb{T}/H_+$ of $\mathbb{T}$-spectra, where $H$ varies through closed subgroups of $\mathbb{T}$.

\subsection*{Contents of Part 2 of this paper: Elliptic cohomology}
Given an  elliptic curve $C$ over a $\Q$-algebra $K$, together with
some coordinate data, \cite{gre05} shows that one may construct an
$SO(2)$-equivariant elliptic cohomology associated to $C$. Indeed, 
one may write down an object $EC_a$ of the algebraic model $d\cA (\T)$
(the construction of $EC_a$ is recalled in 
  Subsection \ref{subsec:ECa}). 
 In view of the results of 
  \cite{gre99} and \cite{BGKS},  there is an associated rational   $\T$-spectrum 
  $EC_{\Top}$.  Moreover,  $d\cA (\T)$ is a symmetric 
monoidal category and $EC_a$ is visibly a commutative monoid.

It was asserted in \cite[Theorem 11.1]{gre05} that \cite{tnqcore} would 
(a) establish a monoidal Quillen equivalence and (b) that would  allow 
one to deduce that $EC_{\Top}$ is a commutative ring spectrum.
 It was then further stated (c) that   its  category of modules would be 
monoidally equivalent to a  derived category of quasi-coherent sheaves over the elliptic curve 
 $C$. 

 In the  event, \cite{tnqcore} did not prove a monoidal equivalence,
 but  we may refer instead to \cite{BGKS}. This shows we may take
$EC_{\Top}$ to be a ring spectrum. But even then,  the deduction of (b) 
from (a) requires substantial additional work, and Part 1 of this
paper establishes a version of it.  Finally, it is the purpose of 
Part 2 of the present paper to give a complete proof of (c). 

As noted above, $EC_a $ is visibly a commutative monoid, and  
therefore an algebra over $\Comm$, and by 
Theorem \ref{thm:fullcompare}, it  corresponds to an
$\Eoneinfty$-algebra $EC_{\Top}$ in rational  $\T$-spectra. We can
therefore consider the symmetric  monoidal category of modules.
 This is effectively what the author of \cite{gre05} had in mind when stating \cite[Theorem 11.1]{gre05}, and this structure suffices to give a natural symmetric 
monoidal product on $EC_{\Top}$--modules. 

To be precise on the geometric side,  we form a 2-periodic version
$P\cOC$ of the structure sheaf $\cOC$ to correspond to the fact that
$EC$ is 2-periodic. 
The non-empty open sets of the usual Zariski topology on $C$ consist of the complements  of arbitrary finite sets of points, but we use the torsion point (tp) topology in which
only points of finite order may be deleted.

The main theorem of Part 2 of the paper appears later as Theorem \ref{cor:main}.
\begin{thrm}
There is a symmetric monoidal Quillen equivalence 
\[
EC_{\Top}\leftmodin\TSP_\bQ \simeq 
\textnormal{quasi-coherent} \endash P\cOC \leftdmodin 
\textnormal{sheaves}/C_{tp}
\]
giving an equivalence of tensor triangulated categories 
\[
\ho( EC_{\Top} \leftmodin \TSP_\bQ) \simeq D(PC_{tp}). 
\]
\end{thrm}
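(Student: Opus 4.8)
The plan is to build the equivalence by transporting the already-established equivalence of underlying categories (rational $\T$-spectra $\simeq$ the algebraic model $d\mcA(\T)$, and correspondingly $d\mcA(\T)$-modules $\simeq$ quasi-coherent $P\cOC$-sheaves on $C_{tp}$) along the commutative ring object $EC_a \in d\mcA(\T)$ and its topological counterpart $EC_{\Top}$, and then to check that everything is compatible with the symmetric monoidal structures. Concretely, the first step is to use Theorem~A to identify $EC_{\Top}$ as an $\Eoneinfty$-algebra corresponding to the commutative monoid $EC_a$; since $\Eoneinfty$-algebras are the ``na\"ive-commutative'' ring $\T$-spectra and these admit a well-behaved category of modules with a symmetric monoidal (relative smash) product, we get a symmetric monoidal model category $EC_{\Top}\leftmodin\TSP_\bQ$. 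On the algebraic side, the analogous category is $EC_a\leftdmodin d\mcA(\T)$, which by the usual dévissage of the algebraic model (the standstill diagram $\bbSfork_a$ and the cellularisation at the generators $K_a$) should be identified with modules over the $\pscr$-diagram of rings obtained by smashing/tensoring $EC_a$ into $\bbSfork_a$; this is a purely algebraic manipulation with diagrams of DGAs.

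The second step is to run the zig-zag of the Introduction diagram \emph{relative to $EC$}: at each stage of the zig-zag one has a commutative (or $\Eoneinfty$-) ring object, and one passes from algebras-over-that-ring at one level to algebras at the next by the base-changed Quillen equivalence. Since each functor in the zig-zag is (strong) symmetric monoidal on the relevant homotopy category and sends the ring at one level to the ring at the next, one obtains a zig-zag of Quillen equivalences between module categories; the key input here is that left Bousfield localisation and cellularisation at the generators $\mathbb{T}/H_+$ interact well with the module structure (the generators for $EC_{\Top}$-modules are the images of $EC_{\Top}\smashprod\mathbb{T}/H_+$), which is exactly the sort of statement established in the body of Part~1. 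Composing, we land at $EC_a\leftdmodin d\mcA(\T)$.

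The third step is the identification $EC_a\leftdmodin d\mcA(\T) \simeq \textnormal{qc-}P\cOC\leftdmodin\textnormal{sheaves}/C_{tp}$. Here one unwinds the definition of $EC_a$ recalled in Subsection~\ref{subsec:ECa}: $EC_a$ is assembled from the $2$-periodic structure sheaf $P\cOC$ evaluated on the open sets of the torsion-point topology together with the function-field data at the generic point, and a module over $EC_a$ in the algebraic model is, after the standard torsion/completion and ``nub/fork'' bookkeeping, precisely a compatible family of modules over the local rings $P\cOC(V)$ — that is, a quasi-coherent $P\cOC$-module on $C_{tp}$. One checks this is an equivalence of model categories and that it matches the tensor products: $\smashprod_{EC_a}$ on one side and $\otimes_{P\cOC}$ on the other. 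Passing to homotopy categories and using that quasi-coherent sheaves on $C_{tp}$ present $D(PC_{tp})$ gives the claimed equivalence of tensor triangulated categories.

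The main obstacle will be the symmetric monoidal bookkeeping across the whole zig-zag: while each underlying Quillen equivalence is known, verifying that the comparison functors are \emph{symmetric} monoidal (not merely monoidal) at the point-set or at least derived level, and that this property is inherited through left Bousfield localisation, cellularisation, and the passage to module categories over the varying ring objects, is delicate — precisely the phenomenon flagged in the Introduction, that symmetric monoidal Quillen functors can misbehave. In the module-category setting one expects this to be manageable (modules, unlike commutative algebras, transport cleanly along monoidal equivalences), but the careful statement and proof that each step is a symmetric monoidal Quillen equivalence of module categories — together with the explicit identification of the generators at each stage so that the cellularisations agree — is where the real work lies. A secondary, more computational, obstacle is the concrete identification in the last step: matching the algebraic model's torsion/completion description of $EC_a$-modules with the Zariski-torsion-point sheaf description of $P\cOC$-modules, including the check that the derived category of quasi-coherent sheaves on $C_{tp}$ is what one expects.
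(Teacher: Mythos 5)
Your three-step plan matches the paper's strategy at a high level, but your sense of where the difficulty lies is inverted, and your third step conceals almost all of the real content of Part~2.

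For your steps 1 and 2: the paper does not rerun the Part~1 zig-zag by hand for $EC_{\Top}$-modules. It has already packaged this as Corollary~\ref{cor:modulesMQE}, which gives directly a zig-zag of \emph{symmetric monoidal} Quillen equivalences $R\leftmodin(\lspec)\simeq\widetilde{R}\leftmodin d\mcA(\T)$ for any rational $\Eoneinfty$-ring $\T$-spectrum $R$, using Blumberg--Hill's $\cL(1)$-modules and Schwede--Shipley to lift modules through each stage. The ``symmetric monoidal bookkeeping'' you flag as the main obstacle is therefore already dealt with in Part~1 (precisely because, as you observe, modules transport cleanly). You should cite the corollary rather than redo the work, and your worry here is misplaced.

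Your step 3 --- ``one unwinds the definition of $EC_a$'' --- is where the paper's proof actually lives, and it cannot be dismissed as a secondary computational point. The paper decomposes it into four genuinely distinct equivalences: (2) an elementary reformulation $EC_a\leftmodin\cA(\T)\simeq\cA(PC)$ matching $EC_a$-module structures against the diagram $\Gamma\cOCfork$ of rings (Lemma~\ref{lem:ECamodcAPC}); (3) a de-sheafification step $\cA(\cOC)\simeq\cA(C)$ converting between modules over skyscraper/constant \emph{sheaves} and modules over the \emph{rings} of global sections (Proposition~\ref{lem:desheaf}) --- this requires the explicit inverse functor $t$ handling the infinitely many torsion-point skyscrapers; (4) the Cellular Skeleton Theorem for sheaves (Lemma~\ref{lem:CSTS}), which relates $d\cA(\PcOC)$ to the cellularization of $\PcOCforkmod$ by an argument paralleling Lemma~\ref{lem:CSTT}; and (5) an application of the Cellularization Principle to the Hasse square of sheaves of rings over $C$ (Lemma~\ref{lem:PBfork}), showing $\cOC\leftdmod\simeq\textnormal{cell}\endash\cOCfork\leftdmod$. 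None of these is automatic, and crucially, step (5) presupposes that a well-behaved \emph{monoidal model structure} exists on quasi-coherent $\cOC$-sheaves in the tp-topology. Constructing that model structure --- the flat tp-model structure of Proposition~\ref{prop:monmod}, with generating sets built from $\cOC(D)$ for torsion-point divisors $D$ --- is itself a nontrivial part of the argument that your proposal never mentions. In short: the proof really does require building a Hasse square on the sheaf side mirroring the one in $\T$-spectra, proving a sheaf-side Cellular Skeleton Theorem, and establishing a new model structure on sheaves; merely ``unwinding the definition'' of $EC_a$ and matching local pieces would not get you through steps (3)--(5).
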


We note that our result showing that $EC_{\Top}$ is an $\Eoneinfty$-algebra  falls short
of
showing $EC_{\Top}$ may be represented by a  commutative orthogonal
$\bT$-spectrum. It is now well known that commutative ring spectra  in the
category of orthogonal $\T$-spectra have additional structure 
(in particular, they admit multiplicative norms along finite index
inclusions) but this was not known to the author of \cite{gre05} in
2005.  We do not know whether $EC_{\Top}$ can be taken to be an 
$\EGinfty$-ring spectrum and hence  admit a  model as a commutative 
orthogonal $\T$-spectrum.

\subsection*{Notation}
From now on we will write $\torus$ for the group $SO(2)$. We also
stick to the convention of drawing the left adjoint above the right
one (or to the left, if drawn vertically) in any adjoint pair. We use $\ch(\bQ)$ for the category of chain
complexes of rational vector spaces, $\Sp$ for the category of
orthogonal spectra, $\GSp$ for the category of orthogonal $G$--spectra, and $\Sp^\Sigma$ for the category of symmetric spectra. We add a subscript $\bQ$ to indicate \emph{rational} ($G$-) spectra. From now on we will use notation $\Eoneinfty$ for non-equivariant operad $\Einfty$ given a trivial $\bT$-action and considered an an operad in $\bT$-spaces.

\subsection*{Acknowledgements} The authors would like to thank the Mathematisches Forschungsinstitut Oberwolfach and the Centre International de Rencontres Math\'ematiques in Luminy for providing  ideal environments to work on this project as part of Research in Pairs Programmes. The second author is grateful to EPSRC for support from EP/P031080/1. The third author was supported by a NWO Veni grant 639.031.757 during final stages of this work.

The authors would also like to thank the Isaac Newton Institute for Mathematical Sciences, Cambridge, for support and hospitality during the programme 
\emph{Homotopy Harnessing Higher Structures}
where work on this paper was completed. This work was supported by EPSRC grant number EP/K032208/1.

%%%%%%%%%%%%%%%%%%%%%%%%%%%%%%%%%%%%%%%%%%%%%%
\part{Quillen equivalence with the algebraic model}

\section{Recollections}

We start this section by recalling several results on lifting various model structures to the categories of algebras over certain operads in equivariant stable setting.

\subsection{Model structures on algebras over operads}

\begin{definition} Let $\mcO$ be an operad in $G$-topological spaces, which has a trivial $G$-action and its underlying non-equivariant operad is an $\einf$-operad in topological spaces. We will call such an operad $\Eoneinfty$.
\end{definition}

\begin{example}\label{ex:operads}
Let $U$ denote a $G$-universe, i.e. a countably infinite-dimensional real $G$-inner product space which contains each finite dimensional sub-representation infinitely often. The \emph{linear isometries operad} $\cL(U)$ is a $G$-operad such that $\cL(U)(n):= \cL(U^n,U)$, where $ \cL(U^n,U)$ denotes non-equivariant linear isometries from $U^n$ to $U$. It is a $G\times \Sn$ space by conjugation and diagonal action. The identity map $U\lra U$ is the distinguished element of $\cL(U)(1)$ and the structure maps are given by composition.

If $U$ is a \textbf{$G$-fixed} $G$-universe then $\cL(U)$ is an example of an  $\Eoneinfty$-operad.

\end{example}

Since the category of $G$-spectra is tensored over $G$-spaces we can consider $\mcO$-algebras in $G$-spectra, where $\mcO$ is a $G$-operad (an operad in $G$-spaces). In particular this applies when $\mcO$ is an $\Eoneinfty$-operad in $G$-spaces.

In the next proposition, which follows from  \cite[Proposition A.1]{BlumbergHillNorms}, we consider the category of orthogonal $G$-spectra with the positive stable model structure of \cite[Section III.5]{mm02}.

\begin{proposition}\label{prop:liftingToOalg} For any $\Eoneinfty$-operad $\mcO$ such that each $\mcO(n)$ has a homotopy type of a $(G\times \Sn)$--CW complex there exists a right-lifted model structure on $\mcO$-algebras in  orthogonal $G$--spectra (i.e. the weak equivalences and fibrations are created in the category of orthogonal $G$--spectra with the positive stable model structure).
\end{proposition}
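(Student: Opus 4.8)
The plan is to construct the model structure by transfer (right-induction) along the free--forgetful adjunction; its existence is essentially \cite[Proposition A.1]{BlumbergHillNorms}, and I sketch the shape of the argument. Since $\GSp$ with the positive stable model structure is cocomplete and tensored over $G$-spaces, the operad $\mcO$ determines a free $\mcO$-algebra functor
\[
\bF_{\mcO}(X)\ =\ \bigvee_{n\ge 0}\ \mcO(n)_+\smashprod_{\Sigma_n}X^{\smashprod n},
\]
left adjoint to the forgetful functor $U\co\algin{\mcO}{\GSp}\to\GSp$, and the category $\algin{\mcO}{\GSp}$ is complete and cocomplete, with $U$ creating limits and filtered colimits. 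Writing $I$ and $J$ for the generating cofibrations and generating acyclic cofibrations of the positive stable model structure, I would declare a map of $\mcO$-algebras to be a fibration, respectively a weak equivalence, exactly when $U$ sends it to one, and take $\bF_{\mcO}I$ and $\bF_{\mcO}J$ as the candidate generating (acyclic) cofibrations. It then remains to verify the two hypotheses of the transfer theorem.

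The first hypothesis, that the small object argument applies to $\bF_{\mcO}I$ and $\bF_{\mcO}J$, is routine: the domains of $I$ and $J$ are small, and $U$ preserves filtered colimits because the monad $U\bF_{\mcO}$ is finitary, being assembled from finite smash powers, $\Sigma_n$-orbits and coproducts. The substantive hypothesis is acyclicity: every transfinite composite of pushouts of maps in $\bF_{\mcO}J$ must be sent by $U$ to a stable equivalence. I would establish this via the standard filtration of a pushout of a free map. For $f\co K\to L$ in $J$ and an $\mcO$-algebra $A$, the underlying $G$-spectrum of the pushout of $\bF_{\mcO}(f)$ along a map $\bF_{\mcO}K\to A$ is the colimit of a sequence $A=A_0\to A_1\to A_2\to\cdots$ in which $A_{n-1}\to A_n$ is a pushout of a map of the form
\[
\mcO(n)_+\smashprod_{\Sigma_n}\bigl(Q_{n-1}(f)\smashprod A\bigr)\ \lra\ \mcO(n)_+\smashprod_{\Sigma_n}\bigl(L^{\smashprod n}\smashprod A\bigr),
\]
where $Q_{n-1}(f)\to L^{\smashprod n}$ is the $n$-th latching map associated with the iterated pushout-products of $f$.

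Here the choice of the \emph{positive} stable model structure is essential: since $f$ is a positive acyclic cofibration, this latching map is a $\Sigma_n$-equivariant acyclic cofibration between orthogonal $G$-spectra on which $\Sigma_n$ acts freely, so the functor $\mcO(n)_+\smashprod_{\Sigma_n}(-\smashprod A)$---which on such $\Sigma_n$-free inputs depends on $\mcO(n)$ only up to $\Sigma_n$-equivariant homotopy equivalence---carries it to an acyclic cofibration of orthogonal $G$-spectra, using precisely that each $\mcO(n)$ has the homotopy type of a $(G\times\Sigma_n)$-CW complex. Consequently each $A_{n-1}\to A_n$, hence the transfinite composite $A\to\colim_n A_n$, is an acyclic cofibration; the same holds for iterated pushouts along $\bF_{\mcO}J$ and their retracts, which yields the acyclicity hypothesis and hence the right-lifted model structure.

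I expect the main obstacle to be exactly this last point: identifying the underlying homotopy type of a free operadic pushout along an acyclic cofibration. It rests on two inputs---the combinatorial filtration of the pushout, and the good behaviour of the positive stable model structure under symmetric powers ($\Sigma_n$-freeness of iterated pushout-products of positive cofibrations, cf.\ \cite[Section III]{mm02})---both of which are assembled in \cite[Proposition A.1]{BlumbergHillNorms}, from which the proposition follows. Note that the $\Eoneinfty$ hypothesis plays no role in the existence of this model structure; only the $(G\times\Sigma_n)$-CW homotopy type of the operad spaces is used.
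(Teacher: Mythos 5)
Your proposal is correct and takes essentially the same route as the paper: the paper's entire justification for this proposition is the citation of \cite[Proposition A.1]{BlumbergHillNorms}, and you also identify that result as the source, correctly filling in the transfer/filtration argument that it encapsulates (including the crucial role of the positive model structure in ensuring $\Sigma_n$-freeness of the latching maps, and the observation that only the $(G\times\Sigma_n)$-CW hypothesis, not the $\Eoneinfty$ condition, is needed).
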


\begin{lemma}\label{lem:adjunctionlift}
Let $(L,R)$ be a $\bV$-enriched strong symmetric monoidal adjunction between $\bV$-tensored
symmetric monoidal categories $\bC$ and $\bD$.

Let $\mcO$ be an operad in $\bV$,
then $L$ and $R$ extend to functors of $\mcO$-operads in $\bC$ and $\bD$, and we obtain
a square of adjunctions
\[
\xymatrix{
\algin{\mcO}{\bC} \ar@<0.5ex>[d]^{U_\bC}  \ar@<0.5ex>[r]^L
&
\algin{\mcO}{\bD} \ar@<0.5ex>[d]^{U_\bD}   \ar@<0.5ex>[l]^R
\\
\bC \ar@<0.5ex>[r]^L \ar@<0.5ex>[u]^{F_{\mcO}}
&
\bD  \ar@<0.5ex>[l]^R \ar@<0.5ex>[u]^{F_{\mcO}}
}
\]
which is commutative in the sense that the square of left adjoints
commutes and the square of right adjoints commutes. 
\end{lemma}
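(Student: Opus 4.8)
The plan is to reduce everything to the universal property of the free–forgetful adjunction for operad algebras, so that the extension of $L$ and $R$ to $\mcO$-algebras is essentially forced. First I would recall the monadic description: for an operad $\mcO$ in $\bV$ and a $\bV$-tensored symmetric monoidal category $\bC$, there is a monad $\mathbb{T}^{\bC}_{\mcO}$ on $\bC$ whose algebras are exactly $\algin{\mcO}{\bC}$, with $\mathbb{T}^{\bC}_{\mcO}(X)=\coprod_n \mcO(n)\otimes_{\Sigma_n}X^{\otimes n}$, where $\otimes$ is the $\bV$-tensoring and $X^{\otimes n}$ is the $n$-fold monoidal power (which carries the $\Sigma_n$-action). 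The free $\mcO$-algebra functor $F_{\mcO}$ is the left adjoint to the forgetful functor $U_{\bC}$. The same applies with $\bD$ in place of $\bC$.

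The key computation is that $L\mathbb{T}^{\bC}_{\mcO}\cong\mathbb{T}^{\bD}_{\mcO}L$ as monads, i.e.\ $L$ is a monad map in the appropriate lax sense. This follows from three facts about the strong symmetric monoidal $\bV$-enriched left adjoint $L$: it is strong monoidal, so $L(X^{\otimes n})\cong (LX)^{\otimes n}$ compatibly with the $\Sigma_n$-actions; it is a $\bV$-functor compatible with the tensorings, so $L(K\otimes X)\cong K\otimes LX$ for $K\in\bV$ (this uses that the $\bV$-tensoring is determined by the $\bV$-enrichment together with colimits, and $L$ being a $\bV$-enriched left adjoint preserves both); and $L$ preserves the coproduct over $n$ and the coequalizer defining $\otimes_{\Sigma_n}$, being a left adjoint. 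Assembling these identifications gives the monad isomorphism, and hence $L$ lifts to $\bar L\colon\algin{\mcO}{\bC}\to\algin{\mcO}{\bD}$ with $U_{\bD}\bar L=LU_{\bC}$ and $\bar L F_{\mcO}\cong F_{\mcO}L$. Then $\bar L$ admits a right adjoint $\bar R$ by the adjoint lifting theorem (or directly: $R$ of an $\mcO$-algebra in $\bD$ is again an $\mcO$-algebra in $\bC$, since $R$ being lax symmetric monoidal — which it is automatically as the right adjoint of a strong monoidal functor — and $\bV$-enriched transports the algebra structure maps $\mcO(n)\otimes Y^{\otimes n}\to Y$ to $\mcO(n)\otimes (RY)^{\otimes n}\to R(\mcO(n)\otimes Y^{\otimes n})\to RY$). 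One checks $U_{\bC}\bar R=RU_{\bD}$ directly from this formula, and that $(\bar L,\bar R)$ is an adjunction by verifying the unit and counit are $\mcO$-algebra maps, which is a diagram chase using naturality.

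The two commutativity assertions are then immediate: the square of right adjoints commutes on the nose, $U_{\bC}\bar R=RU_{\bD}$, by construction of $\bar R$ via the transported structure; and the square of left adjoints commutes up to the canonical isomorphism $\bar L F_{\mcO}\cong F_{\mcO}L$ coming from the monad map, which is really the statement that $L$ of a free algebra is the free algebra on $LX$. I would phrase the conclusion as: the square of right adjoints strictly commutes, and by uniqueness of left adjoints the square of left adjoints commutes up to canonical natural isomorphism (which is all that is needed downstream).

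The main obstacle is bookkeeping rather than conceptual: one must be careful that the $\bV$-tensoring interacts correctly with $L$, i.e.\ that "$\bV$-enriched strong symmetric monoidal adjunction between $\bV$-tensored categories" really does supply the coherent isomorphism $L(K\otimes_{\bC}X)\cong K\otimes_{\bD}LX$ natural in $K\in\bV$ and $X\in\bC$, compatibly with the symmetry so that it descends to the $\Sigma_n$-orbits. In the applications in this paper $\bV$ is based spaces or chain complexes and the categories are (modules in) spectra or chain complexes, where this compatibility is standard, so I would either cite the relevant enriched-category-theory reference or, if a self-contained argument is wanted, spell out that the tensoring is recovered as $K\otimes X=\colim_{\ast\to K}(\ast)\cdot X$ with $\cdot$ the copower, and $L$ preserves copowers (being a left adjoint) and this colimit.
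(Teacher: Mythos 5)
Your proof is correct; the paper in fact states this lemma without proof, treating it as standard enriched monoidal category theory, and the monadic route you take is the expected one. The only thing to fix is the final parenthetical: the formula $K\otimes X=\colim_{\ast\to K}(\ast)\cdot X$ is not how tensors are recovered over a general enriching $\bV$ (the tensoring is a $\bV$-weighted colimit, not a conical one), and what you actually want to cite is the standard fact that a $\bV$-enriched left adjoint preserves all $\bV$-weighted colimits, in particular tensors, so that $L(K\otimes_{\bC}X)\cong K\otimes_{\bD}LX$ coherently and naturally. With that replacement, the rest of the argument --- the isomorphism of free $\mcO$-algebra monads across $L$, the resulting lift $\bar L$ satisfying $U_{\bD}\bar L = LU_{\bC}$ and hence $\bar L F_{\mcO}\cong F_{\mcO}L$ by mateship, the direct construction of $\bar R$ via the lax symmetric monoidal and lax tensored structure that $R$ inherits as the right adjoint of a strong monoidal $\bV$-functor, and the identification of the two commutativities (strict for the right-adjoint square, canonical natural isomorphism for the left-adjoint square) --- is sound and matches what the paper implicitly relies on.
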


Now we consider the case when the original $L,R$-adjunction is a
Quillen equivalence and give criteria for it to lift to a
Quillen equivalence of $\mcO$-algebras. 

\begin{lemma}\label{lem:liftQE}\cite[Lemma 3.6]{BGKfinite} Suppose that $L$ is a strong symmetric monoidal functor tensored over $\bV$, $L$ and $R$ form a Quillen equivalence (at the level of categories $\bC$ and $\bD$) and
that the categories of $\mcO$--algebras in $\bC$ and $\bD$ have
right--lifted cofibrantly generated model structures from the ones on
$\bC$ and $\bD$ respectively. 

If  $U_\bC$ preserves cofibrant objects then the lifted adjoint pair
$L$, $R$ at the level of $\mcO$--algebras is a Quillen equivalence:
\[ \algin{\mcO}{\bC}\simeq  \algin{\mcO}{\bD} .\] 
\end{lemma}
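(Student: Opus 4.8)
The plan is to verify the hypothesis of the standard criterion for a Quillen pair to be a Quillen equivalence: namely, that $L$ is a left Quillen functor, that its right adjoint $R$ reflects weak equivalences between fibrant objects, and that for every cofibrant object the derived unit map is a weak equivalence. Since we are given that $L,R$ is already a Quillen equivalence at the level of $\bC$ and $\bD$, and since (by construction of the right-lifted model structures) fibrations and weak equivalences of $\mcO$-algebras are created by the forgetful functors $U_\bC$ and $U_\bD$, most of these conditions will be inherited almost for free from the base adjunction. What needs genuine argument is the derived unit condition, and here the assumption that $U_\bC$ preserves cofibrant objects is exactly what makes the argument go through.

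First I would observe that $L$ is left Quillen at the level of $\mcO$-algebras. By Lemma~\ref{lem:adjunctionlift} the square of right adjoints commutes, i.e.\ $U_\bD R \cong R U_\bC$ (up to coherent isomorphism); since $R$ is right Quillen on the base categories and $U_\bD$ creates fibrations and weak equivalences, $U_\bD R$ preserves fibrations and acyclic fibrations, hence so does $R$ (the forgetful functor $U_\bD$ detects these), and therefore its left adjoint $L$ is left Quillen. Next I would check that $R$ reflects weak equivalences between fibrant objects: if $f$ is a map of fibrant $\mcO$-algebras in $\bD$ with $Rf$ a weak equivalence, then $U_\bC R f \cong R U_\bD f$ is a weak equivalence in $\bC$; since $R$ on the base reflects weak equivalences between fibrant objects (it is part of a Quillen equivalence and $U_\bD f$ is a map of fibrant objects because $U_\bD$ preserves fibrant objects, being right Quillen), $U_\bD f$ is a weak equivalence in $\bD$, hence $f$ is a weak equivalence of $\mcO$-algebras.

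The main step is the derived unit. Let $X$ be a cofibrant $\mcO$-algebra in $\bC$; I need the composite $X \to RL X \to R((LX)^{\fib})$ to be a weak equivalence, equivalently (since $R$ on the base reflects weak equivalences between fibrant objects, and $U_\bD$ detects everything) that $U_\bC$ applied to this composite is a weak equivalence in $\bC$. Using the commuting squares of Lemma~\ref{lem:adjunctionlift}, $U_\bC$ of the algebra-level unit is identified with the base-level unit $U_\bC X \to R L (U_\bC X)$ of the $(L,R)$-adjunction on $\bC,\bD$, composed with $R$ applied to the comparison $L(U_\bC X) \to U_\bD((LX)^{\fib})$. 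Now the hypothesis that $U_\bC$ preserves cofibrant objects enters: $U_\bC X$ is cofibrant in $\bC$, so the base-level derived unit at $U_\bC X$ is a weak equivalence because $(L,R)$ is a Quillen equivalence on the base. It remains to see that fibrant replacement of $LX$ in $\mcO$-algebras, after applying $U_\bD$, is a weak equivalence out of $L(U_\bC X)$; this holds because $U_\bD$ creates weak equivalences (so it sends the acyclic map $LX \to (LX)^{\fib}$ to a weak equivalence) and $U_\bD L X \cong L U_\bC X$ by the commuting square of left adjoints. Assembling these identifications shows the derived unit at $X$ is a weak equivalence.

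The hard part will be the bookkeeping around fibrant replacement in the category of $\mcO$-algebras versus fibrant replacement downstairs: one must be careful that $U_\bD$ of a fibrant replacement of $LX$ need not be a fibrant replacement of $U_\bD LX = L U_\bC X$ in $\bD$, but this is not needed --- what is needed is only that it is a weak equivalence with fibrant target, and that $R$ reflects weak equivalences between fibrant objects of $\bD$, which is available since $(L,R)$ is a Quillen equivalence. Once this is arranged, the three conditions (left Quillen, reflecting weak equivalences between fibrant objects, derived unit an equivalence) give that $L,R$ is a Quillen equivalence of $\mcO$-algebras, completing the proof.
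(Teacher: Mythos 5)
The paper does not give a local proof of this lemma but simply cites~\cite[Lemma 3.6]{BGKfinite}, so a direct comparison is not possible; however, your argument is correct and is the argument one would expect. The route you take --- verifying that the lifted adjunction is Quillen, that $R$ reflects weak equivalences between fibrant objects, and that the derived unit is an equivalence on cofibrant algebras --- is the standard criterion, and it is the one that fits the asymmetric hypothesis in the statement: only $U_\bC$ (not $U_\bD$) is assumed to preserve cofibrant objects, which is exactly what lets you reduce the derived-unit condition at a cofibrant $\mcO$-algebra $X$ to the base-level derived unit at $U_\bC X$, while a derived-counit argument would need the symmetric hypothesis. Your handling of the fibrant-replacement bookkeeping is also right: it suffices that $U_\bD$ of the $\mcO$-algebra fibrant replacement of $LX$ is a weak equivalence with fibrant target, since any such map computes the derived unit up to isomorphism in the homotopy category, and you observe this correctly.
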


We will apply this result numerous times in the rest of the paper in case where $\mcO$ is an $\Eoneinfty$--operad and categories $\bC$ and $\bD$ are built from equivariant orthogonal spectra, as we explain in the next section.

%%%%%%%%%%%%%%%%%%%%%%%%%%%%%%%%%%%%%%%%%%%%%%

\subsection{Diagrams of model categories}\label{sec:diagramsmodcat}
Since the methods used to obtain an algebraic model for rational $\torus$-spectra are substantially different than the ones for a finite group we recall here the basic building blocks from \cite{BGKS} and \cite{tnqcore}, i.e. diagrams of model categories. This idea has been studied in some detail in \cite{huettemann-roendigs} and in \cite{gsmodules}, among others. In this section we introduce the
relevant structures and leave most of the proofs to the references.
We will only use one shape of a diagram, the pullback diagram $\pscr$:
\[
\bullet \longrightarrow \bullet \longleftarrow \bullet, 
\]
where the direction of the arrows refers to the left adjoints.
Pullbacks of model categories are also considered in detail in
\cite{berg11}. 

\begin{definition}
A \textbf{$\pscr$--diagram of model categories} $R^\bullet$
is a pair of Quillen pairs
\[
\begin{array}{rcl}
L:
\mcA
&
\adjunct
&
\mcB : R \\
F:
\mcC
&
\adjunct
&
\mcB : G \\
\end{array}
\]
with $L$ and $F$ the left adjoints.
We will usually draw this as the diagram below.
\[
\xymatrix{
\mcA
\ar@<+1ex>[r]^L
&
\mcB
\ar@<+0.5ex>[l]^R
\ar@<-0.5ex>[r]_G
&
\mcC
\ar@<-1ex>[l]_F
}
\]
\end{definition}

A standard example, which is of the main interest to this paper, comes from a $\pscr$--diagram of rings
$\Rfork=(R_1 \overset{f}{\rightarrow} R_2 \overset{g}{\leftarrow} R_3)$. Using the adjoint pairs
of extension and restriction of scalars we obtain a $\pscr$--diagram
of model categories $R^\bullet$ as below.
\[
\xymatrix@C+0.3cm{
R_1 \leftmod
\ar@<+1ex>[r]^{R_2 \otimes_{R_1} -}
&
R_2 \leftmod
\ar@<+0.5ex>[l]^{f^*}
\ar@<-0.5ex>[r]_{g^*}
&
R_3 \leftmod.
\ar@<-1ex>[l]_{R_2 \otimes_{R_3} -}
}
\]

\begin{definition}\label{def:generalisedDiag}
Given a $\pscr$--diagram of model categories $R^\bullet $
we can define a new category, $R^\bullet \leftmod$.
The objects of this category are pairs of morphisms,
$\alpha \co La \to b$ and $\gamma \co Fc \to b$ in $\mcB$.
We usually abbreviate a pair $(\alpha \co La \to b, \gamma \co Fc \to b)$
to a quintuple $(a,\alpha, b, \gamma, c)$.

A morphism in $R^\bullet \leftmod$ from
$(a,\alpha, b, \gamma, c)$ to $(a',\alpha', b', \gamma', c')$
is a triple of maps
$x \co a \to a'$ in $\mcA$,
$y \co b \to b'$ in $\mcB$,
$z \co c \to c'$ in $\mcC$ such that we have a commuting diagram in $\mcB$
\[
\xymatrix{
La \ar[r]^\alpha
\ar[d]^{Lx}
& b
\ar[d]^y
& Fc \ar[l]_\gamma
\ar[d]^{Fz} \\
La' \ar[r]^{\alpha'}
& b'
& Fc' \ar[l]_{\gamma'}
}
\]
\end{definition}

Note that we could also have defined an object as a sequence
$(a,\bar{\alpha}, b, \bar{\gamma}, c)$.
where $\bar{\alpha} \co a \to Rb$ is a map in $\mcA$
and $\bar{\gamma} \co c \to Gb$ is a map in $\mcC$.

\begin{notation}
We use $\Rfork\leftmod$ to denote a category of modules over a diagram of rings $\Rfork$. This is a special case of the category $R^\bullet\leftmod$ from Definition \ref{def:generalisedDiag}.
\end{notation}

We say that a map $(x,y,z)$ in $R^\bullet \leftmod$ is an objectwise
cofibration if $x$ is a cofibration of $\mcA$,
$y$ is a cofibration of $\mcB$ and
$z$ is a cofibration of $\mcC$.
We define objectwise weak equivalences similarly.

\begin{lemma}\cite[Proposition 3.3]{gsmodules}
Consider a $\pscr$--diagram of model categories $R^\bullet $ as below,
with each category cellular and proper.
\[
\xymatrix{
\mcA
\ar@<+1ex>[r]^L
&
\mcB
\ar@<+0.5ex>[l]^R
\ar@<-0.5ex>[r]_G
&
\mcC
\ar@<-1ex>[l]_F
}
\]
The category $R^\bullet \leftmod$ admits a cellular
proper model structure with cofibrations and weak
equivalences defined objectwise. This is called the {\em diagram injective} model structure.
\end{lemma}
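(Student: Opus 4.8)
This is \cite[Proposition 3.3]{gsmodules}; the argument also draws on the analysis of pullbacks of model categories in \cite{berg11}, and I indicate its shape. One takes the weak equivalences and cofibrations of $R^\bullet \leftmod$ to be the objectwise ones in the sense just defined, and declares a map to be a fibration precisely when it has the right lifting property against every objectwise acyclic cofibration. It is useful to record first that $R^\bullet \leftmod$ is complete and cocomplete with both limits and colimits computed objectwise in $\mcA\times\mcB\times\mcC$: for colimits because $L$ and $F$, being left adjoints, commute with them, so that the colimit of objects $(a_i,\alpha_i,b_i,\gamma_i,c_i)$ inherits the structure map $\colim La_i=L\colim a_i\to\colim b_i$; for limits because the required map $L\lim a_i\to\lim b_i$ is induced by the universal property of the limit. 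Given this, two-out-of-three, the retract axiom, and the half of the lifting axiom saying that objectwise acyclic cofibrations lift against fibrations all hold by fiat, and left and right properness will follow at the end from properness of $\mcA$, $\mcB$ and $\mcC$, since pushouts, pullbacks and weak equivalences are objectwise and (as one checks) fibrations are in particular objectwise fibrations.

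For the remaining axioms I would present $R^\bullet \leftmod$ as a cofibrantly generated category: the forgetful functor to $\mcA\times\mcB\times\mcC$ has a left adjoint $\Phi$ with $\Phi(a,b,c)=(a,\,La\amalg b\amalg Fc,\,c)$ and the evident structure maps, and I would take as generating (acyclic) cofibrations the images under $\Phi$ of generating (acyclic) cofibrations of the product $\mcA\times\mcB\times\mcC$. The key computation is that, because $L$ and $F$ are left Quillen, $\Phi$ carries a generating cofibration in the $\mcA$-, $\mcB$- or $\mcC$-coordinate to a map which is objectwise a cofibration (in the $\mcB$-coordinate it is, up to a coproduct with an identity, $L$ or $F$ applied to a cofibration); since colimits in $R^\bullet \leftmod$ are objectwise, the relative cell complexes built from these generators are again objectwise cofibrations, so the small object argument produces factorizations of the two required kinds. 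An equivalent, more hands-on description of these factorizations glues along the object $b''$ obtained as the colimit in $\mcB$ of $L\tilde a\leftarrow La\xrightarrow{\alpha}b\xleftarrow{\gamma}Fc\to F\tilde c$, where $a\xrightarrow{i_\mcA}\tilde a\to a'$ and $c\xrightarrow{i_\mcC}\tilde c\to c'$ factor the $\mcA$- and $\mcC$-components; the map $b\to b''$ is a cofibration precisely because $L(i_\mcA)$ and $F(i_\mcC)$ are, and likewise with ``cofibration'' replaced by ``acyclic cofibration''.

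The main obstacle is to verify that these classes genuinely interlock into a model structure, that is, the two lifting statements not yet accounted for: that objectwise cofibrations lift against maps which are both objectwise acyclic and fibrations, and that objectwise acyclic fibrations are fibrations, i.e.\ have the right lifting property against all objectwise acyclic cofibrations. The difficulty is that lifts chosen componentwise in $\mcA$, $\mcB$ and $\mcC$ need not be compatible with the structure maps $\alpha$ and $\gamma$, and---unlike the factorizations---this incompatibility cannot simply be absorbed by a gluing; reconciling the three lifts requires a careful argument threading them through the structure maps, and it is here that the cellularity of $\mcA$, $\mcB$ and $\mcC$ is genuinely used, both to guarantee the smallness needed for the small object argument to converge in $R^\bullet \leftmod$ and to carry out the comparison of the auxiliary lifting systems. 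Once this is in place, $R^\bullet \leftmod$ is cellular (with generating cells the $\Phi$-images above) and proper (objectwise), as claimed. The details are carried out in \cite[Proposition 3.3]{gsmodules}.
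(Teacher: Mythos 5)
The paper itself gives no proof here (it cites \cite[Proposition 3.3]{gsmodules}), so I compare your sketch with the argument that reference supplies. There is a genuine gap, and it sits exactly where you locate the ``main obstacle'': your choice of generating sets does not generate the model structure being claimed. Transferring along the adjunction $\Phi \dashv U$ with $U \co R^\bullet \leftmod \to \mcA\times\mcB\times\mcC$ the forgetful functor produces the \emph{diagram projective} structure: by adjunction, maps with the right lifting property against $\Phi(J)$ (resp.\ $\Phi(I)$) are precisely the objectwise fibrations (resp.\ objectwise acyclic fibrations), so the cofibrations of the structure you generate are the retracts of relative $\Phi(I)$-cell complexes. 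These form a strictly smaller class than the objectwise cofibrations: for a relative $\Phi(I)$-cell complex the pushout-corner maps such as $La' \cup_{La} b \to b'$ must be cofibrations in $\mcB$, a condition an objectwise cofibration need not satisfy (already for one adjunction, the map $(\emptyset,\, b) \to (a',\, b)$ with $a'$ cofibrant and arbitrary structure map $La' \to b$ is objectwise a cofibration but in general not a $\Phi(I)$-cofibration). Consequently your small-object-argument factorizations have right-hand factors that are only objectwise (acyclic) fibrations, and such maps need not have the right lifting property against all objectwise (acyclic) cofibrations; so neither the factorization axiom nor the lifting axiom for the \emph{injective} structure is obtained, and the claim that $R^\bullet\leftmod$ is cellular ``with generating cells the $\Phi$-images above'' cannot be correct, since a cofibrantly generated structure with those generators has the wrong cofibrations.

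The missing idea is the explicit identification of the fibrations of the diagram injective structure, exploiting that the shape $\pscr$ is an inverse category once the twisting by the adjunctions is taken into account: $(x,y,z)\co (a,b,c)\to(a',b',c')$ is a fibration if and only if $y$ is a fibration in $\mcB$ and the induced maps $a \to a'\times_{Rb'} Rb$ and $c \to c'\times_{Gb'} Gb$ are fibrations in $\mcA$ and $\mcC$. With this description the factorizations are built vertexwise in the correct order --- factor first in $\mcB$, then factor the induced maps into the pullbacks in $\mcA$ and in $\mcC$ --- and the lifting problems are solved in the same order, first in $\mcB$ and then against the pullback-corner maps in $\mcA$ and $\mcC$; no reconciliation of independently chosen componentwise lifts is needed, and no cellularity hypothesis enters at this stage. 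Cellularity and properness of $\mcA$, $\mcB$, $\mcC$ are used only to conclude that the resulting structure is itself cellular and proper (which is what the later cellularizations require), not to make the lifting argument converge, so your attribution of the role of these hypotheses, and the deferral of the decisive lifting step to the reference, leave the actual content of the lemma unproved.
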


Whilst there is also a \emph{diagram projective} model structure,
in this paper we only use the diagram injective model structure (and cellularizations thereof)
on diagrams of model categories.

Now consider maps of $\pscr$--diagrams of model categories.
Let $R^\bullet $ and $S^\bullet$ be two  $\pscr$-diagrams,
where $R^\bullet $ is as above and $S^\bullet$ is given below.
\[
\xymatrix{
\mcA'
\ar@<+1ex>[r]^{L'}
&
\mcB'
\ar@<+0.5ex>[l]^{R'}
\ar@<-0.5ex>[r]_{G'}
&
\mcC'
\ar@<-1ex>[l]_{F'}
}
\]
Now we assume that we have Quillen adjunctions as below
such that $P_2 L$ is naturally isomorphic to
$L' P_1$ and $P_2 F$ is naturally isomorphic to
$F' P_3$.
\[
\begin{array}{rcl}
P_1:
\mcA
&
\adjunct
&
\mcA' : Q_1 \\
P_2:
\mcB
&
\adjunct
&
\mcB' : Q_2 \\
P_3:
\mcC
&
\adjunct
&
\mcC' : Q_3 \\
\end{array}
\]
We then obtain a Quillen adjunction $(P,Q)$ between
$R^\bullet \leftmod$ and $S^\bullet \leftmod$.
For example, the left adjoint $P$ takes the object
$(a,\alpha,b,\gamma,c)$ to
$(P_1 a, P_2 \alpha, P_2 b, P_2 \gamma, P_3 c)$.
The commutativity assumptions ensure that this is an
object of $S^\bullet \leftmod$. It is easy to see the following

\begin{lemma}\label{lemma:QEondiag}
If the Quillen adjunctions $(P_i, Q_i)$ are
Quillen equivalences then the adjunction $(P,Q)$ between
$R^\bullet \leftmod$ and $S^\bullet \leftmod$ is a
Quillen equivalence.
\end{lemma}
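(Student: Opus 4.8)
The plan is to check that the adjunction $(P,Q)$ is a Quillen adjunction and then that it detects weak equivalences in the appropriate sense, deducing that it is a Quillen equivalence directly from the definitions, since everything is computed objectwise. First I would verify $(P,Q)$ is a Quillen pair: a cofibration (resp. trivial cofibration) in $R^\bullet\leftmod$ with the diagram injective model structure is a triple $(x,y,z)$ that is objectwise a cofibration (resp. trivial cofibration), and $P$ is applied objectwise by $P_1,P_2,P_3$, each of which is a left Quillen functor; the commutativity isomorphisms $P_2 L\cong L' P_1$ and $P_2 F\cong F' P_3$ are exactly what is needed for $P$ to land in $S^\bullet\leftmod$ and to respect the structure maps, so $P$ preserves objectwise (trivial) cofibrations and hence is left Quillen. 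Dually $Q$ preserves objectwise (trivial) fibrations.

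Next I would use the characterisation of Quillen equivalences via the total derived functors, or more concretely the criterion: $(P,Q)$ is a Quillen equivalence iff $Q$ reflects weak equivalences between fibrant objects and for every cofibrant $X$ the composite $X\to QP X\to Q(PX)^{\mathrm{fib}}$ is a weak equivalence. Here fibrant objects and weak equivalences in both diagram categories are detected objectwise. Since each $(P_i,Q_i)$ is a Quillen equivalence, each $Q_i$ reflects weak equivalences between fibrant objects; applying this coordinatewise shows $Q$ reflects weak equivalences between (objectwise) fibrant objects. For the unit condition, given an objectwise-cofibrant $X=(a,\alpha,b,\gamma,c)$, one may choose objectwise fibrant replacements componentwise (the fibrant replacement in the diagram injective structure can be taken objectwise, since fibrations and trivial cofibrations are objectwise), and then the derived unit map is, in each coordinate, the derived unit of $(P_i,Q_i)$, which is a weak equivalence by hypothesis. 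Hence the derived unit of $(P,Q)$ is an objectwise weak equivalence, i.e.\ a weak equivalence in $R^\bullet\leftmod$.

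The one point that requires a little care — and the main (mild) obstacle — is that fibrant replacement in the diagram injective model structure is a priori a statement about the whole diagram, not obviously computable coordinatewise; one must observe that because the generating trivial cofibrations can be taken to be objectwise (images of the generating trivial cofibrations of $\mcA,\mcB,\mcC$ under the evident inclusions into the diagram category) and fibrations are objectwise, a functorial fibrant replacement constructed by the small object argument is automatically objectwise a fibrant replacement; compatibility of $P$ with the structure maps then propagates through. With this observation the argument reduces entirely to the coordinatewise statements, and the conclusion follows. (Alternatively, one can simply cite the general principle that a levelwise Quillen equivalence of diagrams of model categories over a fixed shape induces a Quillen equivalence on categories of sections/modules, as recorded in \cite{gsmodules}.)
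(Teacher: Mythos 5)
The paper gives no proof of this lemma — it is stated immediately after the words ``It is easy to see the following'' — so there is no official argument to compare against. Your fleshing-out is essentially correct and is exactly the sort of routine coordinatewise check the authors are alluding to: cofibrations, trivial cofibrations and weak equivalences are objectwise in the diagram injective structure, so $P$ is left Quillen coordinatewise, and the derived unit/counit can be computed coordinate by coordinate and recognised as the derived units/counits of the $(P_i,Q_i)$.

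One caveat, which is more a matter of precision than a genuine gap: you assert in a couple of places that ``fibrations are objectwise.'' What is true, and what the argument actually needs, is only the implication in one direction: every fibration of the diagram injective model structure \emph{is} an objectwise fibration (because for each vertex the evaluation functor has a left adjoint that carries generating trivial cofibrations of the vertex to objectwise trivial cofibrations, and one can lift against those); but an objectwise fibration need \emph{not} be a fibration in the diagram structure, since an objectwise lift of a square need not be compatible with the structure maps. Consequently you should not claim that a fibrant replacement can be ``taken objectwise'' or that the generating trivial cofibrations of the diagram are precisely the objectwise ones. Fortunately this does no harm: all the argument requires is that a fibrant replacement $PX\to (PX)^{\mathrm{fib}}$ in $S^\bullet\leftmod$ is an objectwise weak equivalence (clear, as weak equivalences are objectwise) into an object that is objectwise fibrant (true, by the implication above). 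Since the derived unit of a Quillen adjunction is independent of the choice of fibrant replacement, the map $X_i\to Q_i\bigl((PX)^{\mathrm{fib}}_i\bigr)$ represents the derived unit of $(P_i,Q_i)$ for each vertex, and these are weak equivalences by hypothesis. Rephrasing the proof this way removes the unjustified claim and leaves the remainder unchanged.
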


Now we turn to monoidal considerations.
There is an obvious monoidal product for $R^\bullet \leftmod$,
provided that each of $\mcA$, $\mcB$ and $\mcC$ is monoidal
and that the left adjoints $L$ and $F$ are strong monoidal.
\[
(a,\alpha, b, \gamma, c) \smashprod (a',\alpha', b', \gamma', c')
:=(a \smashprod a', \alpha \smashprod \alpha',
b \smashprod b',\gamma \smashprod \gamma',c \smashprod c')
\]
Let $S_\mcA$ be the unit of $\mcA$,
$S_\mcB$ be the unit of $\mcB$ and let
$S_\mcC$ be the unit of $\mcC$.
Since $L$ and $F$ are monoidal, we have maps
$\eta_\mcA \co L S_\mcA \to S_\mcB$
and $\eta_\mcC \co F S_\mcC \to S_\mcB$.
The unit of the monoidal product on
$R^\bullet \leftmod$ is
$(S_\mcA, \eta_\mcA, S_\mcB, \eta_\mcC, S_\mcC)$.

It is worth noting that this category has an internal function object
when $\mcA$, $\mcB$ and $\mcC$ are closed monoidal categories and thus itself is closed.

\begin{lemma}
Consider a $\pscr$--diagram of model categories $R^\bullet $ such that each
vertex is a cellular monoidal model category.
Assume further that the two adjunctions of the diagram are strong monoidal
Quillen pairs.
Then $R^\bullet \leftmod$ is also a monoidal model category.
If each vertex also satisfies the monoid axiom,
so does $R^\bullet \leftmod$.
\end{lemma}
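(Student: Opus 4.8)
The plan is to verify the monoidal model structure axioms for $R^\bullet \leftmod$ directly, exploiting the fact that cofibrations and weak equivalences in the diagram injective model structure are defined objectwise, so essentially everything can be checked one vertex at a time. First I would recall that a generating set of cofibrations for $R^\bullet \leftmod$ can be built from the generating cofibrations at the three vertices $\mcA$, $\mcB$, $\mcC$: there is a left adjoint from each vertex category into $R^\bullet \leftmod$ (for $\mcA$ it sends $a$ to $(a, \mathrm{id}_{La}, La, \mathrm{id}, \ast)$ or an analogous free construction; similarly for $\mcC$; and for $\mcB$ one uses $(\ast, \ast, b, \ast, \ast)$ with the appropriate initial objects), and the generating (trivial) cofibrations of $R^\bullet \leftmod$ are the images of the generating (trivial) cofibrations of $\mcA$, $\mcB$, $\mcC$ under these left adjoints, together with whatever is needed to detect objectwise weak equivalences — but since the model structure is already established (by \cite[Proposition 3.3]{gsmodules}, quoted above), I only need a generating set, which I would take to be these images.

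The key step is the pushout-product axiom. Because the monoidal product on $R^\bullet \leftmod$ is computed objectwise and the left adjoints $L$, $F$ are strong monoidal, the pushout-product of two maps in $R^\bullet \leftmod$ is computed objectwise as the pushout-product of the three component maps in $\mcA$, $\mcB$, $\mcC$ (the strong monoidality of $L$ and $F$ is exactly what makes the structure maps $\alpha$, $\gamma$ behave correctly under $\smashprod$, so that the objectwise pushout really is the pushout in $R^\bullet \leftmod$). Hence it suffices to check the pushout-product axiom on generating cofibrations; since these are images of generating cofibrations at a single vertex, and the free functors are strong monoidal on those generators (again using that $L$, $F$ are strong monoidal), the pushout-product of two generators reduces to the pushout-product axiom at a single vertex $\mcA$, $\mcB$, or $\mcC$, which holds by hypothesis. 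The unit axiom is handled the same way: the unit is $(S_\mcA, \eta_\mcA, S_\mcB, \eta_\mcC, S_\mcC)$, cofibrant replacement can be taken objectwise, and tensoring with the unit is objectwise, so the unit axiom follows from the unit axiom at each vertex. For the monoid axiom, I would observe that the class of maps obtained by tensoring an objectwise trivial cofibration with an arbitrary object, and closing under pushout and transfinite composition, is again detected objectwise (pushouts and transfinite colimits in $R^\bullet \leftmod$ are objectwise, and $\smashprod$ is objectwise), so it reduces to the monoid axiom at each of $\mcA$, $\mcB$, $\mcC$.

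The main obstacle I anticipate is bookkeeping around the structure maps $\alpha \co La \to b$ and $\gamma \co Fc \to b$: one must check carefully that the objectwise pushout-product of maps in $R^\bullet \leftmod$ is genuinely the pushout-product in $R^\bullet \leftmod$, i.e. that the colimits defining pushout-products are created objectwise and that the canonical comparison maps $L(a \smashprod a') \to La \smashprod La'$ and $F(c \smashprod c') \to Fc \smashprod Fc'$ being isomorphisms (strong monoidality) is exactly what is needed for the pushout-product diagram to commute and for its colimit to again be a legitimate object of $R^\bullet \leftmod$. This is routine but is the only place where the hypothesis that $L$ and $F$ are \emph{strong} monoidal (not merely lax) is used in an essential way, and it is worth isolating as a lemma that finite colimits — in particular pushouts — in $R^\bullet \leftmod$ are computed objectwise, after which the remaining verifications are formal transcriptions of the corresponding axioms at the three vertices.
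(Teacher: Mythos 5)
Your overall strategy --- reduce the pushout--product and monoid axioms to the three vertex categories by exploiting that everything in the diagram injective model structure is computed objectwise --- is exactly the paper's approach, and the paper's proof is essentially a one-sentence statement of this. However, your detour through a generating set of cofibrations contains a genuine error, and it is also unnecessary.

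The error: you claim that the generating cofibrations of $R^\bullet\leftmod$ are the images of the generating cofibrations at each vertex under the free left adjoints (sending, e.g., $a$ to $(a,\mathrm{id},La,!,\emptyset)$). That class generates the \emph{diagram projective} cofibrations, not the diagram injective ones. To see they do not agree, take the trivial fork where $\mcA=\mcB=\mcC=\Ch(\Q)$ with $L=F=\mathrm{id}$, and consider the map $(0,0,0)\to(0,0,\Q)$ which changes only the $\mcC$-entry. This is an objectwise cofibration (hence an injective cofibration), but it is not a retract of a relative cell complex built from the free-functor images: the free functor from $\mcC$ sends $0\to\Q$ to $(0,0,\Q,\mathrm{id},\Q)$, which necessarily changes the $\mcB$-entry as well. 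So your proposed generating set is too small, and the pushout--product argument built on it does not cover all injective cofibrations.

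The fix is to abandon generating cofibrations entirely, as the paper does. In the diagram injective model structure, cofibrations and weak equivalences are \emph{defined} objectwise; the monoidal product is computed objectwise; and colimits (in particular the pushouts appearing in $f\Box g$) are computed objectwise, with strong monoidality of $L$ and $F$ ensuring the objectwise colimit carries a canonical $R^\bullet$-module structure. Hence for any two maps $f,g$ in $R^\bullet\leftmod$ the pushout--product $f\Box g$ is, at each vertex, the pushout--product of the vertex components, so the pushout--product axiom at the vertices gives it for $R^\bullet\leftmod$ directly. The same objectwise reasoning handles the unit axiom and, since transfinite composition is also objectwise, the monoid axiom. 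Your closing observation that this should be ``isolated as a lemma that finite colimits in $R^\bullet\leftmod$ are computed objectwise'' is exactly the right lemma to extract, and once that is in place the generating-set machinery adds nothing.
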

\begin{proof}
Since the cofibrations and weak equivalences are defined objectwise,
the pushout product and monoid axioms hold provided they do so
in each model category in the diagram $R^\bullet $.
\end{proof}

We can also extend our monoidal considerations to maps
of diagrams.
Return to the setting of a map $(P,Q)$
of $\pscr$--diagrams from $R^\bullet $ to $S^\bullet$ as described above.
If we assume that each of the adjunctions
$(P_1, Q_1)$, $(P_2, Q_2)$ and $(P_3, Q_3)$
is a symmetric monoidal Quillen equivalence, then we see that
$(P,Q)$ is a symmetric monoidal Quillen equivalence.

\begin{lemma}Consider a $\pscr$--diagram of model categories $R^\bullet $ such that each
vertex is tensored over a symmetric monoidal category $\bV$ and  the two left adjoints of the diagram are also tensored over $\bV$. Then $R^\bullet \leftmod$ is tensored over $\bV$, using the formula
$$v\otimes (a,\alpha, b, \gamma, c):= (v\otimes a, v\otimes \alpha,v\otimes b, v\otimes \beta, v\otimes c).$$
\end{lemma}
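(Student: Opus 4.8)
The plan is to verify directly that the stated formula defines a tensoring, reducing everything to the already-established vertex-wise structures. First I would recall that each of $\mcA$, $\mcB$, $\mcC$ is tensored over $\bV$, so for $v \in \bV$ we have functors $v \otimes - $ on each vertex, and moreover the two left adjoints $L$ and $F$ are $\bV$-tensored, meaning there are natural isomorphisms $L(v \otimes a) \cong v \otimes La$ and $F(v \otimes c) \cong v \otimes Fc$ compatible with the tensoring structure. Given an object $(a,\alpha,b,\gamma,c)$ of $R^\bullet\leftmod$, the structure map $\alpha \co La \to b$ can then be tensored to $v \otimes \alpha \co v \otimes La \to v \otimes b$, and precomposing with the inverse of the isomorphism $L(v\otimes a) \cong v \otimes La$ produces a map $L(v \otimes a) \to v \otimes b$; similarly for $\gamma$. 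This is precisely the data $(v \otimes a, v \otimes \alpha, v \otimes b, v \otimes \gamma, v \otimes c)$ appearing in the statement (with the understood identifications), so the formula does land in $R^\bullet\leftmod$.

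Next I would check functoriality and the coherence data making this a genuine $\bV$-tensoring. A morphism $(x,y,z)$ of diagram modules is sent to $(v \otimes x, v \otimes y, v \otimes z)$, and the required commuting squares in $\mcB$ follow by applying $v \otimes -$ to the original squares and using naturality of the isomorphisms $L(v \otimes -) \cong v \otimes L(-)$ and $F(v \otimes -) \cong v \otimes F(-)$. For the coherence isomorphisms $(v \otimes w) \otimes (-) \cong v \otimes (w \otimes (-))$ and $S_\bV \otimes (-) \cong \id$, one assembles them from the corresponding coherence isomorphisms at each vertex; compatibility with the structure maps $\alpha,\gamma$ again reduces to the coherence of the tensoring isomorphisms for $L$ and $F$, which is part of the hypothesis that these are $\bV$-tensored functors. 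Finally, the adjoint characterisation of a tensoring — that $\bC(v \otimes c, c')$ is naturally isomorphic to $\bV(v, \underline{\bC}(c,c'))$ for an internal or enriched hom — follows objectwise together with the earlier observation that $R^\bullet\leftmod$ inherits an internal function object from the closed structures on the vertices.

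The only step requiring genuine care is checking that the tensored structure maps satisfy the required coherences \emph{uniformly across the diagram}, i.e.\ that the natural isomorphisms witnessing $v \otimes -$ on $R^\bullet\leftmod$ are built so that the component in $\mcB$ is simultaneously compatible with the images of both $L$ and $F$. This is where one must be slightly careful about the order in which the various canonical isomorphisms are composed, but it is a purely formal diagram chase: every identity needed is the vertex-level identity transported along the $\bV$-tensored structure of $L$ and $F$, and no new input is required. I expect this bookkeeping to be the main (though routine) obstacle, and the proof will simply remark that all verifications are inherited objectwise from the vertices and the tensored left adjoints.

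\begin{proof}
Since each vertex $\mcA$, $\mcB$, $\mcC$ is tensored over $\bV$ and the left adjoints $L$ and $F$ are tensored over $\bV$, for $v \in \bV$ and an object $(a,\alpha,b,\gamma,c)$ of $R^\bullet\leftmod$ we may form $v \otimes \alpha \co v \otimes La \to v \otimes b$ and compose with the canonical isomorphism $L(v \otimes a) \cong v \otimes La$ to obtain a structure map $L(v \otimes a) \to v \otimes b$, and similarly $F(v \otimes c) \to v \otimes b$. This gives the object displayed in the statement. Functoriality in morphisms $(x,y,z)$, and the coherence isomorphisms expressing associativity and unitality of the $\bV$-action, are all obtained by applying the vertex-level tensoring to the corresponding structure on $\mcA$, $\mcB$ and $\mcC$; compatibility with the structure maps $\alpha$, $\gamma$ follows from naturality and coherence of the isomorphisms $L(v\otimes -) \cong v \otimes L(-)$ and $F(v \otimes -) \cong v \otimes F(-)$, which hold because $L$ and $F$ are $\bV$-tensored functors. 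Finally, the defining adjunction of a tensoring is inherited objectwise, using the internal function object on $R^\bullet\leftmod$ noted above. Hence $R^\bullet\leftmod$ is tensored over $\bV$ via the stated formula.
\end{proof}
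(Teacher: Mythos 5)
The paper states this lemma without proof (treating it as a routine verification), so there is no argument in the paper for direct comparison. Your proposal is correct and is precisely the standard direct check one would perform: use the $\bV$-tensored structure of $L$ and $F$ to convert $v \otimes \alpha \co v \otimes La \to v \otimes b$ into a map $L(v \otimes a) \to v \otimes b$ (and likewise for $\gamma$), then verify functoriality, associativity, unitality, and the adjoint characterisation objectwise. One small remark: the displayed formula in the lemma has a typo, writing $v \otimes \beta$ where it should read $v \otimes \gamma$; you silently correct this, which is the right reading.
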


\begin{lemma}Under the assumptions from the lemma above, if $\mcO$ is an operad in $\bV$ and both left adjoints of the diagram $R^\bullet $ are strong symmetric monoidal, then the category of $\mcO$--algebras in $R^\bullet \leftmod$ is equivalent to the category $(\algin{\mcO}{R^\bullet}) \leftmod$, where  $\algin{\mcO}{R^\bullet}$ is the following diagram
\[
\xymatrix{
\algin{\mcO}{\mcA}
\ar@<+1ex>[r]^L
&
\algin{\mcO}{\mcB}
\ar@<+0.5ex>[l]^R
\ar@<-0.5ex>[r]_G
&
\algin{\mcO}{\mcC}
\ar@<-1ex>[l]_F
}
\]
\end{lemma}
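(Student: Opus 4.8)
The plan is to prove the equivalence by unwinding definitions, the point being that in $R^\bullet\leftmod$ essentially everything in sight — the monoidal product $\smashprod$, the tensoring over $\bV$, and the notion of a commuting diagram — is computed objectwise.

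First I would record that $\algin{\mcO}{R^\bullet}$ really is a $\pscr$--diagram of categories: applying Lemma~\ref{lem:adjunctionlift} to each of the Quillen pairs $(L,R)$ and $(F,G)$ — whose left adjoints are strong symmetric monoidal and tensored over $\bV$ by hypothesis — one obtains lifted functors $L\co\algin{\mcO}{\mcA}\to\algin{\mcO}{\mcB}$ and $F\co\algin{\mcO}{\mcC}\to\algin{\mcO}{\mcB}$, so that $(\algin{\mcO}{R^\bullet})\leftmod$ is defined by the recipe of Definition~\ref{def:generalisedDiag}.

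Next I would unpack an $\mcO$--algebra in $R^\bullet\leftmod$: such an object is an $X=(a,\alpha,b,\gamma,c)$ equipped with structure maps $\theta_n\co\mcO(n)\otimes X^{\smashprod n}\to X$ in $R^\bullet\leftmod$ subject to the unit, equivariance and associativity axioms. Because $\smashprod$ and the $\bV$--tensoring on $R^\bullet\leftmod$ are both objectwise, $\mcO(n)\otimes X^{\smashprod n}$ has $\mcO(n)\otimes a^{\smashprod n}$, $\mcO(n)\otimes b^{\smashprod n}$, $\mcO(n)\otimes c^{\smashprod n}$ at its three vertices, with connecting maps built from $\alpha$ and $\gamma$ via the strong symmetric monoidal, $\bV$--tensored structure of $L$ and $F$ (using the natural isomorphisms $L(\mcO(n)\otimes a^{\smashprod n})\cong\mcO(n)\otimes L(a)^{\smashprod n}$ and its $F$--analogue). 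Consequently $\theta_n$ amounts to a triple of maps $\theta_n^a,\theta_n^b,\theta_n^c$ in $\mcA,\mcB,\mcC$ together with the two commuting squares in $\mcB$ required of a morphism of $R^\bullet\leftmod$; and since a diagram in $R^\bullet\leftmod$ commutes iff it commutes at each vertex, the operad axioms for $\theta$ are equivalent to those for $\theta^a$, $\theta^b$, $\theta^c$ separately. Thus an $\mcO$--algebra structure on $X$ is exactly a triple of $\mcO$--algebra structures on $a$, $b$, $c$ for which $\alpha\co La\to b$ and $\gamma\co Fc\to b$ are maps of $\mcO$--algebras — which is precisely the data of an object of $(\algin{\mcO}{R^\bullet})\leftmod$. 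The identical objectwise bookkeeping identifies a morphism of $\mcO$--algebras in $R^\bullet\leftmod$ with a triple of $\mcO$--algebra maps making the square of Definition~\ref{def:generalisedDiag} commute, i.e. a morphism of $(\algin{\mcO}{R^\bullet})\leftmod$; that the resulting assignments are mutually inverse functors is then formal.

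The only step needing genuine care — the "hard part", such as it is — is the coherence check that the $\mcB$--vertex structure map of $\mcO(n)\otimes X^{\smashprod n}$ agrees, under $L(\mcO(n)\otimes a^{\smashprod n})\cong\mcO(n)\otimes L(a)^{\smashprod n}$, with the map assembled from $L\alpha$, and similarly from $F\gamma$. This is exactly where the hypotheses that $L$ and $F$ are strong symmetric monoidal and tensored over $\bV$ are used, and it is a routine (if slightly tedious) diagram chase rather than a conceptual obstacle.
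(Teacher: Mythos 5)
The paper states this lemma without proof, treating it as a direct unwinding of definitions, and your argument supplies exactly that unwinding correctly: you lift $L$ and $F$ to operad algebras via Lemma~\ref{lem:adjunctionlift}, observe that the monoidal product and $\bV$-tensoring on $R^\bullet\leftmod$ are objectwise so that $\mcO(n)\otimes X^{\smashprod n}$ is computed vertexwise, and then identify the two commuting squares defining a structure map $\theta_n$ as a morphism of $R^\bullet\leftmod$ with precisely the condition that $\alpha\colon La\to b$ and $\gamma\colon Fc\to b$ be $\mcO$-algebra maps. Your flagged coherence check (that the induced map $L(\mcO(n)\otimes a^{\smashprod n})\to\mcO(n)\otimes b^{\smashprod n}$ really is $\mcO(n)\otimes\alpha^{\smashprod n}$ under the strong monoidal isomorphism) is indeed the only place the hypotheses are genuinely used, and the proof is complete.
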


%%%%%%%%%%%%%%%%%%%%%%%%%%%%%%%%%%%%%%%%%%%%%%

\subsection{Cellularization}
A cellularization of a model category
is a right Bousfield localization at a set of objects.
Such a localization exists by \cite[Theorem 5.1.1]{hir03}
whenever the model category is right proper and cellular.
When we are in a stable context the results of \cite{brstable}
can be used.

In this subsection we recall the notion of cellularization when $\mcC$ is stable.

\begin{definition}
Let $\mcC$ be a stable model category and $K$ a stable set of objects of $\mcC$, i.e. such that a class of $K$--cellular objects of $\mcC$ is closed under desuspensions.\footnote{Note that this class is always closed under suspensions.}
We say that a map $f \co A \longrightarrow B$ of $\mcC$ is a \textbf{$K$--cellular equivalence} if
the induced map
\[
[k,f]^\mcC_*: [k,A]^\mcC_* \longrightarrow [k,B]^\mcC_*
\]
is an isomorphism of graded abelian groups for each $k \in K$. An object $Z \in \mcC$ is said to be
\textbf{$K$--cellular} if
\[
[Z,f]^\mcC_*: [Z,A]^\mcC_* \longrightarrow [Z,B]^\mcC_*
\]
is an isomorphism of graded abelian groups for any $K$--cellular equivalence $f$.
\end{definition}

\begin{definition}
A \textbf{right Bousfield localization} or \textbf{cellularization} of $\mcC$ with respect to
a set of objects $K$ is a model structure $K \cell \mcC$ on $\mcC$ such that
\begin{itemize}[noitemsep]
\item the weak equivalences are $K$-cellular equivalences
\item the fibrations of $K \cell \mcC$ are the fibrations of $\mcC$
\item the cofibrations of $K \cell \mcC$ are defined via left lifting property.
\end{itemize}
\end{definition}

By \cite[Theorem 5.1.1]{hir03}, if $\mcC$ is a right proper, cellular model category
and $K$ is a set of objects in $\mcC$, then the cellularization of $\mcC$ with respect to $K$, $K \cell \mcC$,
exists and is a right proper model category.
The cofibrant objects of $K \cell \mcC$
are called \textbf{$K$--cofibrant} and are precisely the
$K$--cellular and cofibrant objects of $\mcC$.
Moreover, if $\mcC$ is a stable model category and $K$ is a stable set of cofibrant objects (see \cite[Definition 4.3]{brstable}), then $K \cell \mcC$ is also cofibrantly generated, and the set of generating acyclic cofibrations is the same as for $\mcC$ (see \cite[Theorem 4.9]{brstable}).

%%%%%%%%%%%%%%%%%%%%%%%%%%%%%%%%%%%%%%%%%%%%%
\section{Preliminaries}

We now proceed to lifting model structures to the categories of algebras over $\Eoneinfty$-operads in this new setting of diagrams of model categories.

\subsection{Lifting model structures}

If $\bC$ is a model category we want to right--lift the model structure from $\bC$ to
the category $\algin{\mcO}{\bC}$ of $\mcO$-algebras in $\bC$ using 
the right adjoint $U$, i.e. we want the weak equivalences and fibrations to be created by $U$.
In our case, $\bC$ will be some category of $R$-modules in $\TSP$ (or spectra) 
with weak equivalences those maps which forget to rational equivalences of $\TSP$.

First we recall Kan's result for right lifting model structures in this setting.

\begin{lemma}\cite[Theorem 11.3.2]{hir03}
\label{lem:liftedmodel} Suppose $\bC$ is a cofibrantly generated model category with a set of generating acyclic cofibrations $J$.  If
\begin{itemize}
\item the free $\mcO$--algebra functor $F_{\mcO}$ preserves small objects (or the forgetful functor $U$ preserves filtered
  colimits) and
  \item every transfinite composition of pushouts (cobase extensions)
  of elements of $F_{\mcO}(J)$ is sent to a weak equivalence
  of $\bC$ by $U$,
\end{itemize}
then the model structure on $\bC$ may be lifted using the right
adjoint to give a cofibrantly generated model structure on
$\algin{\mcO}{\bC}$. The functor $U$ then creates fibrations and weak
equivalences.
\end{lemma}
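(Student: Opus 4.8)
The plan is to apply the standard transfer (right-lifting) principle for cofibrantly generated model structures. I would take as candidate generating cofibrations $F_{\mcO}(I)$ and as candidate generating acyclic cofibrations $F_{\mcO}(J)$, where $I$ and $J$ are the generating cofibrations and generating acyclic cofibrations of $\bC$, declare a map in $\algin{\mcO}{\bC}$ to be a weak equivalence or fibration precisely when $U$ carries it to one in $\bC$, and let the cofibrations be the maps with the left lifting property against the acyclic fibrations. The task is then to check that this data satisfies the model category axioms and is cofibrantly generated by $F_{\mcO}(I)$, $F_{\mcO}(J)$.

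First I would record the formal input. The category $\algin{\mcO}{\bC}$ is bicomplete: limits are created by $U$, and colimits exist by the usual monadic argument, using that $U$ creates reflexive coequalisers and (under hypothesis (1)) filtered colimits. By the free--forgetful adjunction, a map $p$ of $\algin{\mcO}{\bC}$ has the RLP against $F_{\mcO}(I)$ (resp.\ $F_{\mcO}(J)$) if and only if $U(p)$ has the RLP against $I$ (resp.\ $J$), i.e.\ if and only if $U(p)$ is an acyclic fibration (resp.\ fibration) of $\bC$; thus the proposed (acyclic) fibrations are exactly the $F_{\mcO}(J)$- (resp.\ $F_{\mcO}(I)$-) injectives. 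Hypothesis (1) --- $F_{\mcO}$ preserves small objects, equivalently $U$ preserves filtered colimits --- guarantees that the domains of $F_{\mcO}(I)$ and $F_{\mcO}(J)$ are small relative to the corresponding cell complexes (the transfinite compositions involved are filtered colimits, preserved by $U$), so both sets permit the small object argument. This produces the two functorial factorisations: any map factors as a relative $F_{\mcO}(I)$-cell complex followed by an $F_{\mcO}(I)$-injective, and as a relative $F_{\mcO}(J)$-cell complex followed by an $F_{\mcO}(J)$-injective. Two-out-of-three, the retract axiom, and one half of each lifting axiom follow immediately from the corresponding facts in $\bC$, since $U$ reflects weak equivalences and fibrations and preserves retracts and the relevant colimits.

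The one genuinely non-formal point --- and the step I expect to be the main obstacle --- is showing that every acyclic cofibration $j$ has the LLP against every fibration; this is exactly where hypothesis (2) is needed. I would factor $j = p \circ i$ via the small object argument, with $i$ a relative $F_{\mcO}(J)$-cell complex and $p$ an $F_{\mcO}(J)$-injective, hence a fibration. By hypothesis (2), $U(i)$ is a weak equivalence of $\bC$, so $i$ is a weak equivalence of $\algin{\mcO}{\bC}$, and by two-out-of-three $p$ is an acyclic fibration. Since $j$ is a cofibration it lifts against $p$, and the usual retract argument exhibits $j$ as a retract of $i$. As $i$, being a relative $F_{\mcO}(J)$-cell complex, has the LLP against all fibrations, so does its retract $j$. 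This closes the remaining lifting axiom.

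Finally, the two candidate sets $F_{\mcO}(I)$ and $F_{\mcO}(J)$ are by construction the generating cofibrations and generating acyclic cofibrations of the resulting model structure: $F_{\mcO}(J)$-cofibrations are cofibrations (they lift against the fibrations, in particular the acyclic fibrations) and are weak equivalences by (2), while $F_{\mcO}(I)$-injectives are exactly the acyclic fibrations as noted above. That $U$ creates fibrations and weak equivalences is built into the definitions. The only care points worth flagging in the write-up are the verification that $\algin{\mcO}{\bC}$ is genuinely bicomplete and that the smallness needed for the small object argument is supplied by hypothesis (1); everything else is the routine bookkeeping of Kan's lifting theorem.
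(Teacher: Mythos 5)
Your argument is correct and is precisely the standard proof of the transfer (Kan lifting) theorem: the paper gives no proof of this lemma, citing it directly as \cite[Theorem 11.3.2]{hir03}, and your write-up reproduces exactly that reference's argument — generating sets $F_{\mcO}(I)$, $F_{\mcO}(J)$, smallness from hypothesis (1), the small object argument for the factorisations, and hypothesis (2) plus the retract trick for the remaining lifting axiom. No gaps; this matches the intended (cited) proof.
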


The following remark tells us that cellularisations interact well with lifted model structures. 

\begin{remark}\label{rem:cellularisation_lift}
Let $\bC$ be a cofibrantly generated model structure and $\mcO$ be an operad such that 
the free $\mcO$--algebra functor $F_{\mcO}$ preserves small objects and such that the model structure on $\bC$ 
right lifts to $\mcO$--algebras in $\bC$.
Assume that for a set of objects $K$ the cellularisation $K \cell \bC$ of the model structure on $\bC$ exists. 
Then there exists a right--lifted model structure on $\mcO$--algebras in $\bC$ from $K \cell \bC$.

This follows directly from Kan's lifting lemma and the fact that the 
generating acyclic cofibrations for $K \cell \bC$ are the same as the ones for $\bC$.
\end{remark}

We will lift various model structures on $R$--modules in $\TSP_\bQ$ to
the level of algebras over an $\Eoneinfty$--operad $\mcO$. 
We first consider the categorical setting. 
Since $R$ is a $\Comm$--algebra we get a distributivity law $$F_{\mcO}(R\wedge -) \lra R\wedge F_{\mcO}(-),$$
and hence $$\algin{\mcO}{R \leftmodin \TSP}\cong R \leftmodin \algin{\mcO} \TSP.$$
Therefore the functor $R \sm -$ lifts to a functor at the level of $\mcO$-algebras as in the diagram below. 
In the following, $U$ denotes 
forgetful functors and $F_{\mcO,R}$ is defined by 
$X \mapsto \vee_{n \geqslant 0} \mcO(n)_+ \sm_{\Sn} X^{\sm_R^n}.$ 
\[
\xymatrix@C+1cm{
\algin{\mcO}{R \leftmodin \TSP}
\ar@<-1ex>[r]_-{U}
\ar@<+1ex>[d]^{U}
&
\ar@<-1ex>[l]_-{R \sm -}
\algin{\mcO}{\TSP}
\ar@<+1ex>[d]^{U}
\\
R \leftmodin \TSP  
\ar@<-1ex>[r]_-{U}
\ar@<+1ex>[u]^{F_{\mcO,R}}
&
\ar@<-1ex>[l]_-{R \sm -}
\TSP
\ar@<+1ex>[u]^{F_{\mcO}}
}
\]

Now considering model categories, we want to lift the positive stable model structure on rational $\T$--spectra to the other three corners. 
Lifting to $R$-modules is standard and lifting to $\mcO$-algebras
is Proposition \ref{prop:liftingToOalg}. Thus all that remains is the 
top left corner.

Since the homotopy theory of $\mcO$-algebras does not depend on the choice of the $\Eoneinfty$-operad $\mcO$ which at level $n$ has a homotopy type of $(\bT\times \Sigma_n)$-CW complex, from now on we will use the generic notation $\Eoneinfty$-algebras.

\begin{lemma}\label{lem:tspectraRmodules}  For $R$ a commutative ring $\torus$-spectrum
there is a cofibrantly generated model structure on
$\Eoneinfty$-algebras in $R\leftmod$ where the weak equivalences
are those maps which forget to rational stable equivalences of $\torus$-spectra.
Furthermore, the forgetful functor to $R\leftmod$ preserves cofibrant objects.
\end{lemma}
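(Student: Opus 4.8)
The plan is to apply Kan's lifting lemma (Lemma~\ref{lem:liftedmodel}) to the forgetful functor $U \co \algin{\Eoneinfty}{R\leftmod} \to R\leftmod$, where $R\leftmod$ is equipped with the model structure right-lifted from the positive stable model structure on $\TSP_\bQ$ along $R\sm -$. First I would recall that such a right-lifted model structure on $R\leftmod$ exists by the standard argument for modules over a ring spectrum (\cite[\S III.7]{mm02}), and that its weak equivalences are exactly the maps forgetting to rational stable equivalences of $\torus$-spectra; this is the category $\bC$ to which we apply Kan's lemma. The two hypotheses of Lemma~\ref{lem:liftedmodel} then need to be verified: that the free $\Eoneinfty$-algebra functor $F_{\Eoneinfty,R}$ preserves small objects (equivalently that $U$ preserves filtered colimits, which is clear since colimits of $\Eoneinfty$-algebras are computed on underlying objects when $U$ creates them appropriately, or at least that filtered colimits are so computed), and the key point, that transfinite composites of pushouts of maps in $F_{\Eoneinfty,R}(J)$ are sent by $U$ to rational stable equivalences.

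For the second hypothesis, the idea is to reduce to the already-known case of $\Eoneinfty$-algebras in $\TSP_\bQ$ itself, i.e.\ Proposition~\ref{prop:liftingToOalg} (via \cite[Proposition A.1]{BlumbergHillNorms}), using the distributivity isomorphism $\algin{\Eoneinfty}{R\leftmodin\TSP} \cong R\leftmodin\algin{\Eoneinfty}\TSP$ recalled just before the lemma. Concretely, a pushout in $\algin{\Eoneinfty}{R\leftmod}$ along a generating map $F_{\Eoneinfty,R}(j)$ corresponds, under this isomorphism, to a pushout of $R$-modules in $\algin{\Eoneinfty}{\TSP}$ along $R\sm F_{\Eoneinfty}(j)$; since smashing with the cofibrant commutative ring spectrum $R$ and the relevant filtered colimits are all computed on underlying spectra, the analysis of the relevant cell attachments in $\algin{\Eoneinfty}{\TSP}$ (the "cofibrancy hypothesis" on the spaces $\mcO(n)$ being $(\torus\times\Sigma_n)$-CW, which guarantees the pushout-product/filtration argument of \cite[Proposition A.1]{BlumbergHillNorms}) carries over verbatim. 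The underlying filtration of such a pushout is built from maps of the form $\mcO(n)_+ \sm_{\Sigma_n} (\text{iterated pushout products of } j \text{ over } R)$, and each such map forgets to a rational stable equivalence because $j$ does and because the positive stable model structure has the requisite cofibrancy to make these $\Sigma_n$-quotients homotopically well-behaved; transfinite composites of such maps remain rational stable equivalences.

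For the last sentence — that $U \co \algin{\Eoneinfty}{R\leftmod} \to R\leftmod$ preserves cofibrant objects — the plan is to factor $U$ through $\algin{\Eoneinfty}{\TSP}$ and $\TSP$ and use two facts: first, by Proposition~\ref{prop:liftingToOalg} together with the explicit description of cofibrations as retracts of $F_{\Eoneinfty}$-cell complexes, the forgetful functor $\algin{\Eoneinfty}{\TSP}\to\TSP$ preserves cofibrant objects in the positive stable structure (again this is \cite[Proposition A.1]{BlumbergHillNorms}, where the cofibrancy of the operad spaces ensures the free algebra functor applied to a generating cofibration is a cofibration of spectra); and second, that a cofibrant $\Eoneinfty$-algebra in $R\leftmod$ is a retract of a cell complex built from $F_{\Eoneinfty,R}(I)$, whose image in $R\leftmod$ (hence in $\TSP$) is a retract of a cell complex built from $R\sm F_{\Eoneinfty}(I)$, i.e.\ cofibrant. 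The main obstacle I anticipate is not any single step in isolation but the careful bookkeeping in the pushout/filtration argument: one must check that the interaction between the $\Sigma_n$-equivariance of $\mcO(n)$, the positive-cofibrancy conditions needed to take homotopy $\Sigma_n$-orbits, and the base change to $R$-modules all cohere — but all the necessary ingredients are packaged in \cite[Proposition A.1]{BlumbergHillNorms} and the distributivity isomorphism, so the proof is essentially a matter of invoking those in the module setting rather than proving anything genuinely new.
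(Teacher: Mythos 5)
Your overall strategy coincides with the paper's: right-lift along $(F_{\Eoneinfty,R},U)$ by verifying Kan's hypotheses, and reduce the key acyclicity step to the non-module case using the compatibility $F_{\Eoneinfty,R}(R\sm -)\cong R\sm F_{\Eoneinfty}(-)$. The paper phrases this reduction by invoking \cite[Theorem 4.4]{BGKfinite} (the $\Eoneinfty$-algebras-in-$\TSP_\bQ$ case) rather than \cite[Proposition A.1]{BlumbergHillNorms}, but these are doing the same work, and your Steps 1--3 are essentially the paper's Steps 1--3.

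The one place where your argument is not just sketchy but actually misstates the situation is the final paragraph on cofibrant preservation. You claim that a cofibrant $\Eoneinfty$-algebra, being a retract of an $F_{\Eoneinfty,R}(I)$-cell complex, has underlying $R$-module a retract of a cell complex built from $R\sm F_{\Eoneinfty}(I)$. That is not true as stated: the pushouts along $F_{\Eoneinfty,R}(i)$ are taken in $\algin{\Eoneinfty}{R\leftmod}$, and their underlying $R$-modules are \emph{not} pushouts along $R\sm F_{\Eoneinfty}(i)$ — they carry the more intricate filtration by $\mcO(n)_+\sm_{\Sigma_n}(\text{iterated pushout products})$ which you correctly invoked earlier for the acyclicity step. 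Establishing that the underlying object of such a filtered colimit is cofibrant is exactly the nontrivial content, and it is what the paper outsources to \cite[Theorem 5.18]{harperhess13}. So the conclusion you want is correct and the ingredients you name (cofibrancy of $\mcO(n)$, the pushout-product/filtration analysis) are the right ones, but the shortcut ``cell complex maps to cell complex'' does not hold and needs to be replaced by the filtration argument or the citation to Harper--Hess.
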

\begin{proof} 

We adapt the proof of \cite[Theorem 4.4]{BGKfinite}. 
A corollary of that theorem states that there is a lifted model structure on  $\Eoneinfty$--algebras in $\TSP_\bQ$. 
We extend this to 
$\Eoneinfty$--algebras in $R \leftmodin \TSP_\bQ$. Specifically, we show that the 
adjunction $(F_{\Eoneinfty,R}, U)$ can be used to lift the model structure on 
$R \leftmodin \TSP_\bQ$ to $\Eoneinfty$--algebras in $R \leftmodin  \TSP_\bQ$. 
 
As in the reference, this result has two parts, 
the first part is about the interaction of pushouts,
sequential colimits and $h$-cofibrations (also known as the Cofibration Hypothesis).
This is well known and uses a standard technique of describing
pushouts of $\Eoneinfty$-algebras as sequential colimits in the underlying category 
(in this case $R \leftmod$).
This argument appears in several places, one of the earlier occurrences is Elmendorf et al \cite[Chapters VII and VIII]{EKMM97}  where they construct model structure of commutative algebras and localisations of commutative algebras. A more recent description is given by Harper and Hess in  \cite[Proposition 5.10]{harperhess13}.

The second part is that every map built using pushouts (cobase extensions) and
sequential colimits from $F_{\Eoneinfty,R} (R \sm J)$ 
(i.e. maps of the form $F_{\Eoneinfty,R} (R \sm j)$, for $j$ a generating acyclic
cofibration of $ \TSP_\bQ$) is a rational equivalence. It can be broken into a number of steps.
\begin{description}
\item[Step 1] If $f \co X \to Y$ is a
generating acyclic cofibration of the rational model structure on 
$R$-modules then
$F_{\Eoneinfty,R} f$ is a rational equivalence and a $h$-cofibration of underlying \mbox{$\T$-spectra}.

\item[Step 2] Any pushout (cobase extension) of
$F_{\Eoneinfty,R} f$ is a rational equivalence and a $h$-cofibration of underlying $\T$-spectra.

\item[Step 3] Any sequential colimit of such maps is a rational equivalence.
\end{description}

To prove Step 1 we first identify the generating acyclic cofibrations as 
maps of the form $R \sm g \co R \sm Z \to R \sm Z'$ where $g \co Z \to Z'$
is a acyclic generating cofibration of $\TSP_\bQ$ and hence has cofibrant domain
and codomain. We know that 
$F_{\Eoneinfty,R} (R \sm g) = R \sm F_{\Eoneinfty} g$. 
By  \cite[Theorem 4.4]{BGKfinite}, 
$F_{\Eoneinfty} g$ is a rational acyclic cofibration of cofibrant \mbox{$\torus$--spectra}. 
Smashing such a map with a spectrum gives a rational acyclic $h$-cofibration of $\torus$--spectra that in $R \leftmod$
is a cofibration of cofibrant objects. 
Step 2 and Step 3 follow as in the classical case. 

By \cite[Theorem 5.18]{harperhess13} the forgetful functor
\[
U \colon \algin{\Eoneinfty}{R \leftmodin \TSP_\bQ} 
\longrightarrow 
R \leftmodin \TSP_\bQ 
\]
preserves cofibrant objects. 
\end{proof}

\begin{lemma}\label{lem:generalised_diag_lift} Suppose there is a right-lifted model structure to $\algin{\mcO}{\bC}$ from each category ${\bC}$ used to built the diagram category ${R^\fork}\leftmod$ separately and $$U:\algin{\mcO}{R^\fork} \leftmod \lra {R^\fork} \leftmod$$ commutes with filtered colimits. Then there is a right-lifted model structure from the diagram injective model structure on ${R^\fork} \leftmod$ to $\algin{\mcO}{R^\fork} \leftmod$.
\end{lemma}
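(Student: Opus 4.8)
The plan is to apply Kan's lifting lemma (Lemma \ref{lem:liftedmodel}) directly to the adjunction
\[
F_{\mcO} \co R^{\fork} \leftmod \adjunct \algin{\mcO}{R^{\fork}} \leftmod \co U,
\]
where the diagram injective model structure on $R^{\fork}\leftmod$ is cofibrantly generated (being cellular, by \cite[Proposition 3.3]{gsmodules}), and to verify its two hypotheses using the objectwise nature of both the weak equivalences on $R^{\fork}\leftmod$ and the free $\mcO$-algebra functor. The first hypothesis, that $F_{\mcO}$ preserves small objects, is equivalent to $U$ preserving filtered colimits, which is exactly one of our assumptions. So the real content is the second hypothesis: every transfinite composition of pushouts of maps in $F_{\mcO}(J^{\fork})$ is sent by $U$ to a weak equivalence of $R^{\fork}\leftmod$, where $J^{\fork}$ is the chosen set of generating acyclic cofibrations for the diagram injective model structure.

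First I would recall (or observe) that the generating acyclic cofibrations $J^{\fork}$ for the diagram injective model structure on $R^{\fork}\leftmod$ can be taken so that each element has exactly one nontrivial component: that is, $J^{\fork}$ is a union of three pieces, one for each vertex $\mcA,\mcB,\mcC$ of the diagram, where an element coming from the acyclic generating cofibrations $J_{\mcA}$ (resp.\ $J_{\mcB}$, $J_{\mcC}$) of a vertex is supported at that vertex (using the left adjoints $L,F$ to extend to the other vertices where necessary). The key point is that the free $\mcO$-algebra functor on $R^{\fork}\leftmod$ is computed objectwise — this is the content of the preceding lemma identifying $\mcO$-algebras in $R^{\fork}\leftmod$ with $(\algin{\mcO}{R^{\fork}})\leftmod$ — and likewise pushouts and transfinite compositions in $\algin{\mcO}{R^{\fork}}\leftmod$ are computed vertexwise in $\algin{\mcO}{\mcA}$, $\algin{\mcO}{\mcB}$, $\algin{\mcO}{\mcC}$. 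Therefore a transfinite composition of pushouts of maps in $F_{\mcO}(J^{\fork})$, evaluated at any one vertex, is a transfinite composition of pushouts of maps in $F_{\mcO}(J_{\mcA})$, $F_{\mcO}(J_{\mcB})$ or $F_{\mcO}(J_{\mcC})$ — possibly with some factors being identities, or being the image under the strong symmetric monoidal left adjoint $L$ (resp.\ $F$) of such a transfinite composition at a neighboring vertex. Either way, the hypothesis that each $\algin{\mcO}{\bC}$ is right-lifted from $\bC$ tells us that $U_{\bC}$ sends it to a weak equivalence of $\bC$; and since $L$ and $F$ are left Quillen and, being strong monoidal left adjoints between the relevant module categories, commute with the free $\mcO$-algebra functors and preserve the relevant (acyclic, $h$-cofibration-type) maps, the pushed-forward factors also become weak equivalences. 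Hence $U$ sends the whole transfinite composition to an objectwise weak equivalence, which is precisely a weak equivalence of $R^{\fork}\leftmod$.

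With both hypotheses of Lemma \ref{lem:liftedmodel} verified, the lemma produces the desired cofibrantly generated right-lifted model structure on $\algin{\mcO}{R^{\fork}}\leftmod$, with $U$ creating fibrations and weak equivalences. I would close by remarking that one may equally deduce this from Remark \ref{rem:cellularisation_lift} together with the description of the diagram injective model structure, so that the statement is also compatible with subsequent cellularizations at the sets $K_{(-)}$.

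The main obstacle I anticipate is the bookkeeping around how $F_{\mcO}(J^{\fork})$ interacts with the two left adjoints in the diagram: one must be careful that the ``one-vertex-supported'' generating acyclic cofibrations push forward under $L$ and $F$ to maps whose $\mcO$-free images behave well under pushout and transfinite composition — in particular that $L F_{\mcO} \cong F_{\mcO} L$ (which follows from $L$ being strong symmetric monoidal and tensored over $\bV$, as in Lemma \ref{lem:adjunctionlift}), and that the relevant acyclicity is preserved. Everything else is a formal consequence of objectwiseness of weak equivalences together with the per-vertex lifting hypotheses; there are no genuinely new homotopical inputs needed beyond those already established vertexwise.
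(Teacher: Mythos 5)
Your proposal is correct and follows essentially the same strategy as the paper's one-line proof: apply Kan's lifting lemma to the adjunction $(F_{\mcO},U)$, with the first hypothesis supplied directly by the assumptions and the second hypothesis supplied by the fact that the free $\mcO$-algebra functor, weak equivalences, pushouts and transfinite compositions are all computed objectwise, so the per-vertex lifted model structures do the work at each vertex. The one place where you add a layer that the paper does not is your explicit description of $J^{\fork}$ as being supported at a single vertex; this is not needed for the argument (and depends on details of the reference's construction of the diagram-injective generating sets). It suffices instead to observe that \emph{whatever} $J^{\fork}$ is, each $j\in J^{\fork}$ is objectwise an acyclic cofibration, so $F_{\mcO}(j)$ is objectwise an acyclic cofibration of the per-vertex lifted model structure (as $F_{\mcO}$ is left Quillen there); transfinite compositions of pushouts of acyclic cofibrations are acyclic cofibrations, hence weak equivalences, at each vertex, and weak equivalences in $R^{\fork}\leftmod$ are objectwise. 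Using that cleaner formulation avoids the $L$/$F$-pushforward bookkeeping that you flagged as the main anticipated obstacle, and aligns with the brief justification the paper actually gives.
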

\begin{proof}
If each separate lift exists, then the conditions of Kan's result are satisfied for the (generalised) diagram category, since the free $\mcO$-algebra functor, acyclic cofibrations, weak equivalences, transfinite compositions and pushouts are defined objectwise.
\end{proof}

As well as $\Eoneinfty$-algebras on $\TSP$ and $\Sp$ we will need
$\Comm$-algebras on various model categories of non-equivariant spectra. 
Specifically we will need to consider 
$\Comm$-algebras in the rationalised positive stable model structure on 
orthogonal spectra, 
$\Comm$-algebras in the rationalised positive stable model structure on 
symmetric spectra, 
and 
$\Comm$-algebras in the positive stable model structure on 
$H \bQ$--modules in symmetric spectra.  
We give a proof that the various lifted model structures exist for
the orthogonal spectra case, the remaining cases are similar. 

\begin{lemma}\label{lem:R-mod_lift} 
Let $R$ be a commutative ring spectrum.
If we equip orthogonal spectra with 
the positive rational stable model structure 
then there are lifted model structures on the categories of 
$\Comm$-algebras in $R$-modules in $\Sp_\bQ$ and  
$\Eoneinfty$-algebras in $R$-modules in $\Sp_\bQ$.
\end{lemma}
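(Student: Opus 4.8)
The plan is to imitate the structure of the proof of Lemma~\ref{lem:tspectraRmodules}, since the situation is formally the same with $\TSP_\bQ$ replaced by the positive rational stable model structure on orthogonal spectra $\Sp_\bQ$, and with $\mcO$ taken to be either $\Comm$ or an $\Eoneinfty$-operad. First I would recall that the positive stable model structure on orthogonal spectra admits a lifted model structure on $\Comm$-algebras: this is the orthogonal-spectra analogue of \cite[Proposition A.1]{BlumbergHillNorms} (equivalently, the classical result of \cite{mm02} that commutative ring orthogonal spectra form a model category with weak equivalences and fibrations created in the positive stable model structure), and the rationalisation is a left Bousfield localisation which, by Remark~\ref{rem:cellularisation_lift}-type reasoning (or directly, since the generating acyclic cofibrations only change), does not obstruct the lift. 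For the $\Eoneinfty$-operad case one invokes Proposition~\ref{prop:liftingToOalg} (non-equivariantly, with $G$ trivial, so every $\mcO(n)$ is a $\Sigma_n$-CW complex by hypothesis on the operad). Thus in both cases the lift to algebras in $\Sp_\bQ$ itself is already available.

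Next I would pass from algebras in $\Sp_\bQ$ to algebras in $R$-modules in $\Sp_\bQ$. As in Lemma~\ref{lem:tspectraRmodules}, since $R$ is a $\Comm$-algebra there is a distributivity isomorphism $\algin{\mcP}{R\leftmodin\Sp}\cong R\leftmodin\algin{\mcP}{\Sp}$ for $\mcP\in\{\Comm,\Eoneinfty\}$, and the free algebra functor $F_{\mcP,R}$ sends $R\sm(-)$ to $R\sm F_{\mcP}(-)$. One then checks Kan's criteria (Lemma~\ref{lem:liftedmodel}): the Cofibration Hypothesis (interaction of pushouts, sequential colimits and $h$-cofibrations) is the standard EKMM-style filtration argument \cite[Chapters VII--VIII]{EKMM97}, \cite[Proposition 5.10]{harperhess13}; and the second condition reduces, exactly as in the three steps of Lemma~\ref{lem:tspectraRmodules}, to showing that $F_{\mcP,R}$ applied to a generating acyclic cofibration $R\sm g$ (with $g$ an acyclic generating cofibration of $\Sp_\bQ$, hence with cofibrant domain and codomain) is a rational equivalence and an underlying $h$-cofibration, and that this property is preserved under cobase extension and sequential colimit. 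The key input is that $F_{\mcP}g$ is already a rational acyclic cofibration of cofibrant orthogonal spectra — for $\mcP=\Comm$ this is exactly the content of the $\Comm$-algebra lift in $\Sp_\bQ$ recalled in the first step (positivity of the model structure is what makes $F_{\Comm}$ behave homotopically well on cofibrant objects), and for $\mcP=\Eoneinfty$ it is the non-equivariant shadow of \cite[Theorem 4.4]{BGKfinite} — and then smashing with $R$ keeps it a rational acyclic $h$-cofibration that is a cofibration of cofibrant objects in $R\leftmod$. Steps 2 and 3 are then formal.

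The main obstacle, as usual in this circle of results, is verifying that $F_{\mcP}$ of a generating acyclic cofibration remains a weak equivalence after passing to cobase extensions in the category of algebras; this is where positivity of the stable model structure is essential (it rescues the homotopy type of the symmetric powers $E\Sigma_{n+}\sm_{\Sigma_n}X^{\sm n}$ versus $X^{\sm n}/\Sigma_n$) and it is the point at which one genuinely needs $\mcO(n)$ to have the homotopy type of a (free, in the $\Comm$ case after the $E\Sigma_n$-replacement built into the positive model structure) $\Sigma_n$-CW complex. Since the non-equivariant case of this is classical (it is precisely how commutative ring orthogonal spectra are shown to form a model category), I would cite \cite[Theorem 4.4]{BGKfinite}, \cite{harperhess13} and \cite{mm02} rather than reproving it, and simply remark that the arguments are identical to those of Lemma~\ref{lem:tspectraRmodules} with $\TSP_\bQ$ replaced by $\Sp_\bQ$. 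Finally, as in Lemma~\ref{lem:tspectraRmodules}, \cite[Theorem 5.18]{harperhess13} gives that the forgetful functor $\algin{\mcP}{R\leftmodin\Sp_\bQ}\to R\leftmodin\Sp_\bQ$ preserves cofibrant objects, which is the statement we will need downstream when applying Lemma~\ref{lem:liftQE}; the remaining cases (symmetric spectra, and $H\bQ$-modules in symmetric spectra) are word-for-word the same.
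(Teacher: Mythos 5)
Your treatment of the $\Eoneinfty$ case matches the paper exactly: both simply invoke a non-equivariant version of Lemma~\ref{lem:tspectraRmodules}. For the $\Comm$ case, however, you take a genuinely different and longer route. The paper's proof sidesteps the Kan-lifting machinery entirely via a categorical observation: $\Comm$-algebras in $R$-modules are the same thing as commutative $R$-algebras, which is the under-category $R/\Comm\endash\mathrm{alg}$, and an under-category automatically inherits a model structure from the ambient one. So once the model structure on $\Comm$-algebras in $\Sp_\bQ$ is in hand (by \cite[Theorem A.2]{BGKfinite}), the $R$-module version is immediate with no further work. Your version instead re-runs the three-step argument from Lemma~\ref{lem:tspectraRmodules} with $\mcP=\Comm$; this can be made to work, but it is more delicate than you let on, because the analogue of ``$F_{\Eoneinfty}g$ is a rational acyclic cofibration of cofibrant spectra'' does not hold verbatim for $\Comm$ (the operad spaces $\Comm(n)=*$ are not $\Sigma_n$-free, so $F_{\Comm}$ involves strict orbits $X^{\sm n}/\Sigma_n$ rather than homotopy orbits). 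The resolution — which you gesture at but do not spell out — is Shipley's positivity theorem identifying $X^{\sm n}/\Sigma_n$ with $E\Sigma_{n+}\sm_{\Sigma_n}X^{\sm n}$ for positively cofibrant $X$, together with the EKMM filtration analysis of pushouts of $\Comm$-algebras. What you lose by not using the under-category trick is precisely having to re-verify these technicalities; what you gain is a uniform argument for both operads. Both are valid, but you should be aware that the paper's $\Comm$ argument is a shortcut that avoids re-opening the symmetric-powers question.
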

\begin{proof}
The category of commutative algebras in $R$-modules
is the category of commutative $R$-algebras, 
which in turn is the category of commutative algebras
under $R$. Hence the
model structure for $\Comm$-algebras in $\Sp_\bQ$
\cite[Theorem A.2]{BGKfinite}
gives a model structure on 
$\Comm$-algebras in $R$-modules in $\Sp_\bQ$.

For the $\Eoneinfty$-case we use a non-equivariant version 
of Lemma \ref{lem:tspectraRmodules}. 
\end{proof}

\section{An algebraic model for rational \texorpdfstring{$\bT$}{T}--spectra}\label{sec:algModTsp}

We start by summarising the classification of 
rational $\bT$--spectra in terms of an
algebraic model. We will only outline the strategy here, for details and definitions we refer the reader to \cite{BGKS} and \cite{tnqcore}.

\begin{remark}\label{rmk:threechoices}
We choose to work with a simplified version of the proof presented in \cite{BGKS}. The simplification arises from very recent work of the 
second author, \cite{JohnCouniversalComm}
which proves that the ring $\bT$-spectrum 
$\widetilde{E}F$ has a genuinely commutative model.

This means that instead of working with localisations of model categories used in \cite{BGKS}, we work with diagrams of modules over a diagram of \emph{genuinely} commutative ring $\bT$--spectra. This approach is simpler,
and more closely follows the approach of
\cite{tnqcore}.

We remark here, that any one of the three zig-zags: the one presented in this paper, the one used in \cite{BGKS} or the one using $\cL(1)$-modules in $\bT$-orthogonal spectra suggested in \cite{tnqcore}, can be used to 
prove that there is a zig-zag of Quillen equivalences
\[
\algin{\Eoneinfty}{\TSP_\bQ}
\ \simeq \ 
\algin{\Comm}{d \mcA(\mathbb{T})_{dual}}
\]
\end{remark}

The Tate square for the family of finite subgroups is the homotopy
pullback square
$$\xymatrix{
\bbS \ar[r] \ar[d] & \siftyV{\bT}\ar[d] \\
D\efp \ar[r] & \siftyV{\bT}\sm D\efp 
}$$
of (genuinely) commutative ring $\T$-spectra (Note that $\siftyV{\bT}=\widetilde{E}F$ has a commutative model in rational orthogonal $\bT$-spectra by \cite{JohnCouniversalComm}). 
We then omit the copy of $\bbS$ at the top left, to leave the fork 
\[
\bbSfork_{\Top}=
\left(
\vcenter{\xymatrix{ & \siftyV{\bT}\ar[d] \\
D\efp \ar[r] & \siftyV{\bT}\sm D\efp 
}}\right) .
\]
The Cellularization Principle shows that, the category of $\T$-spectra can then be recovered from modules over 
$\bbSfork_{\Top}$ by taking a pullback construction \cite[Proposition 4.1]{gsmodules} (see also 
\cite[Proposition 4.1]{tnqcore}, 
\cite[Proposition 6.2]{gscell}): 
\[
\TSP_\bQ=  \bS \leftmodin \TSP_\bQ \simeq 
K_{\Top} \cell \bbSfork_{\Top} \leftmodin \TSP_\bQ.
\]
The cellularization here is at the derived image of the generators for $\TSP_\bQ$, which we denote by $K_{\Top}$. Taking $\T$-fixed points gives the diagram of rings
(arranged on a line for typographical convenience)
\[
(\bbSfork_{\Top})^\torus
=
\left(
D\efp^\bT
\longrightarrow
(\siftyV{\bT}\sm D\efp)^{\bT}
\longleftarrow (\siftyV{\bT})^{\bT}
\right).
\]
If $M$ is a module over the ring $\torus$--spectrum $R$, 
then $M^\torus$ is a module over the (non-equivariant) 
ring spectrum $R^\torus$. Taking $\torus$-fixes points
at every place of the diagram gives a Quillen equivalence
\[
\adjunction{\zeta_{\#}}{(\bbSfork_{\Top})^\torus \leftmod}{\bbSfork_{\Top} \leftmodin \TSP_\bQ}{(-)^\torus}
\]
by \cite[Section 3.3]{BGKS} or \cite[Section 7.A]{tnqcore}.
Let $K_{\Top}^\torus$ be the derived image of the cells 
$K_{\Top}$ under this adjunction. Then we have a 
symmetric monoidal Quillen equivalence 
\[
\adjunction{\zeta_{\#}}
{K_{\Top}^\torus \cell (\bbSfork_{\Top})^\torus \leftmod}
{K_{\Top} \cell \bbSfork_{\Top} \leftmodin \TSP_\bQ}{(-)^\torus}.
\]

Rational orthogonal spectra are symmetric 
monoidally Quillen equivalent to 
rational symmetric spectra in simplicial sets
$\Sp_\bQ^\Sigma$, which in turn are 
symmetric monoidally Quillen equivalent to 
$H \bQ$--modules in symmetric spectra.
Hence we can apply the 
results of \cite{shiHZ} to obtain a diagram $\bbSfork_{t}$ of 
commutative ring objects in rational chain complexes
and symmetric monoidal Quillen equivalences
\[
(\bbSfork_{\Top})^\torus \leftmod
\ \simeq \ 
\bbSfork_{t} \leftdmod 
\quad \quad 
K_{\Top}^\torus \cell (\bbSfork_{\Top})^\torus \leftmod
\ \simeq \ 
K_{t} \cell \bbSfork_{t} \leftdmod \\
\]
where $K_{t}$ is the derived image of the cells 
$K_{\Top}^\torus$, see
\cite[Section 3.4]{BGKS} or \cite[Section 8]{tnqcore}. 
Here, $\leftdmod$ denotes differential objects in graded modules. 
The results of \cite{shiHZ} give isomorphisms between 
the homology of these rational chain complexes 
in terms of homotopy groups of the original spectra. 

There is a product splitting 
\[
D\efp \simeq \prod_n D\elr{n}
\]
where the single isotropy space $\elr{n}$ is defined 
by the cofibre sequence
\[
E[H\subset C_n]_+\lra E[H\subseteq  C_n]_+\lra \elr{n}.
\]
We may then calculate 
\[
\pi_*(D\elr{n}^\torus)
=
\pi_*^\torus (D\elr{n}) \cong 
H^*(BS^1/C_n) = \bQ[c],
\quad \quad 
\pi_*((\siftyV{\bT})^{\bT}) 
= \bQ
\]
As these rings are polynomial, we use formality arguments 
to see that 
$\bbSfork_{t}$ is quasi-isomorphic (as diagrams of commutative rings)
to the diagram 
\[
\bbSfork_a=\left( 
\vcenter{\xymatrix{ & \Q\ar[d] \\
\cOcF  \ar[r] & \cEi \cOcF
}}\right) 
\]
of graded rings. Recall that 
$\cOcF \cong \prod_n \Q[c] $
and 
\[
\cEi \cOcF = \colim_{V^{\T}=0}\Sigma^V \cOcF
\]
where $\Sigma^V\cOcF=\prod_n\Sigma^{|V^{C_n}|}\Q [c]$. 
For typographical convenience we arrange this along a line
$$\cOcF \stackrel{l}\lra \cEi \cOcF \stackrel{\iota}\lla \Q, $$
and modules over this diagram are of the form 
$$N \stackrel{k}\lra P \stackrel{m}\lla V$$ 
where $k$ is the map of $\cOcF$-modules and $m$ is a map of $\Q$-modules.

Formality arguments allow us to simplify the diagram of commutative rings $\bbSfork_t$ to a quasi-isomorphic diagram of commutative rings $\bbSfork_a$. 
Another sequence of formality arguments gives a 
simpler set of cells $K_a$ defined in 
\cite[Lemma 4.2.2]{BGKS}.
Hence we have zig-zags of symmetric monoidal Quillen equivalences 
\[
\bbSfork_a \leftdmod \ \simeq \ \bbSfork_t \leftdmod
\quad \quad 
K_{a} \cell \bbSfork_a \leftdmod \ \simeq \ 
K_{t} \cell \bbSfork_t \leftdmod
\]
see \cite[Section 9]{tnqcore}.

Let $\cA (\T)$ be the subcategory of $\Rafork$-modules
where the structure maps induce isomorphisms $l_*N\cong P$ and $\iota_*V\cong 
P$. We use notation $d\cA (\T)$ for the category of objects of $\cA (\T)$ equipped with a differential. This categories are described in detail in \cite{BGKS}.

We give the differential module categories of the individual rings in $\Rafork$ the
(algebraic) projective model structures and $\Rafork \leftdmod$ the diagram
injective model structure. We give the category $d\cA(\T)$ of differential  objects of
$\cA (\T)$ the dualizable model structure of Barnes
\cite{barnesmonoidalmodelso2}: the essential fact is that this is a monoidal model
structure.  

The final stage in our sequence of Quillen equivalences is
to remove the cellularisation.
There is a symmetric monoidal Quillen equivalence 
\[
K_a \cell \Rafork \leftdmod \simeq d\cA (\T),
\]
by \cite[11.5]{tnqcore}.

%%%%%%%%%%%%%%%%%%%%%%%%%%%%%%%%%%%%%%%%%%%%%%
\section{An algebraic model for rational na\"ive-commutative ring \texorpdfstring{$\torus$}{T}--spectra}

In this section we construct our sequence of Quillen equivalences between $\Eoneinfty$-algebras in
rational $\torus$-spectra and commutative algebras in the algebraic model. The strategy is to replace
$\Eoneinfty$ by $\Comm$ as soon as we have moved from equivariant spectra to  (modules over $(\bS^\fork_{\Top})^{\bT}$ in) non-equivariant spectra. We then apply the results of
Richter and Shipley \cite{richtershipley} to compare commutative algebras in rational spectra to
commutative algebras in rational chain complexes.

Before we check the compatibility of the zig-zag of Quillen equivalences from Section \ref{sec:algModTsp} with algebras over $\Eoneinfty$-operads, we note that this zig-zag of Quillen equivalences works for positive stable model structures.

\begin{remark} The model structures and Quillen equivalences of Section \ref{sec:algModTsp}
relating rational $\torus$-spectra to the algebraic model $d\mcA(\torus)$ can all be obtained using the
positive stable model structure on the various categories of spectra.
Moreover, the adjunction
\[
\adjunction{H\bQ \wedge -}{\Sp_\bQ^\Sigma}{H\bQ \leftmod}{U}
\]
is a Quillen equivalence when the left side is considered with the positive stable model structure and the right hand side is considered with the positive flat stable model structure of \cite{Shipley_convenient}. This model structure on $H\bQ \leftmod$ is the starting point for the work in \cite{richtershipley}.
\end{remark}

Notice that all left adjoints of the zig-zag of Quillen equivalences from Section \ref{sec:algModTsp} (see also \cite{BGKS} and \cite{tnqcore}) up to $H\bQ \leftmod$ are strong symmetric monoidal.

By Lemmas \ref{lem:liftQE}, \ref{lem:generalised_diag_lift}, \ref{lem:R-mod_lift} and Remark \ref{rem:cellularisation_lift} we obtain lifted model structures on the categories of $\Eoneinfty$-algebras and Quillen equivalences at the levels of $\Eoneinfty$-algebras as below. 
\[
\xymatrix@R=1pc @C=7pc{
 & \algin{\Eoneinfty}{\TSP_\bQ}
\ar@<-1ex>[dd]_{\bbSfork \smashprod - } & **[l] \mathrm{in\ } \TSP
\\ \  \\
 & \algin{\Eoneinfty}{K_{\Top} \cell \bbSfork_{\Top} \leftmod}
\ar@<-1ex>[uu]_{\mathrm{pb}}
\ar@<+1ex>[dd]^{(-)^{\torus}} & **[l] \mathrm{in\ } \TSP
\\ \  \\
& \algin{\Eoneinfty}{K_{\Top}^{\torus} \cell (\bbSfork_{\Top})^\bT \leftmod}
\ar@<+1ex>[uu]^{\zeta_{\#}} & **[l] \mathrm{in\ } \Sp
}
\]

At this point we change the operad from $\Eoneinfty$ to $\Comm$ obtaining a Quillen equivalence.
\begin{lemma}\label{lem:einftocomm}\cite[Lemma 6.2]{BGKfinite}
There is an adjunction
\[
\adjunction{\eta_*}
{\algin{\Eoneinfty}{   \spO_\bQ   }}
{\algin{\Comm}{   \spO_\bQ   }}
{\eta^*}
\]
induced by the map of operads in $\Top_*$ $\eta: \Eoneinfty \lra \Comm$. This adjunction is a Quillen equivalence with respect to right-induced model structures from the rational positive stable model structure on $\spO_\bQ$.
\end{lemma}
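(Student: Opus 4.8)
The plan is to produce the Quillen equivalence by combining the rectification theorem for $\Einfty$-algebras over $\Comm$ in a suitable symmetric monoidal model category with the good behaviour of the forgetful functor established earlier in the excerpt. The statement is precisely \cite[Lemma 6.2]{BGKfinite} applied in the non-equivariant rational setting, so I would first invoke that reference and then indicate how its hypotheses are met here. The map of operads $\eta\colon \Eoneinfty \to \Comm$ in based spaces is a levelwise weak equivalence (each $\Eoneinfty(n)$ is a contractible free $\Sigma_n$-space, each $\Comm(n)$ is a point), so on underlying spaces it is an equivalence of operads; the content is that this equivalence is detected model-categorically once both sides carry right-induced model structures.

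First I would record that the right-induced model structures in question exist: on $\algin{\Comm}{\spO_\bQ}$ this is \cite[Theorem A.2]{BGKfinite} (with the rational positive stable model structure on orthogonal spectra), and on $\algin{\Eoneinfty}{\spO_\bQ}$ this is the non-equivariant case of the discussion leading to Lemma \ref{lem:tspectraRmodules} (equivalently the $R = \bbS$ case, using that each $\Eoneinfty(n)$ has the homotopy type of a $\Sigma_n$-CW complex). With both structures in hand, $\eta^*$ manifestly preserves fibrations and weak equivalences, since both are created by the forgetful functors to $\spO_\bQ$ and $\eta^*$ commutes with those forgetful functors; hence $(\eta_*, \eta^*)$ is a Quillen adjunction. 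Second I would check the Quillen equivalence by the usual criterion: $\eta^*$ reflects weak equivalences between fibrant objects (clear, again because weak equivalences are created underlying and $\eta^*$ is compatible), so it remains to show that for every cofibrant $\Eoneinfty$-algebra $X$ the derived unit $X \to \eta^* \eta_* X$ is a weak equivalence. The point is that the forgetful functor from $\Eoneinfty$-algebras preserves cofibrant objects (the non-equivariant analogue of the last sentence of Lemma \ref{lem:tspectraRmodules}, via \cite[Theorem 5.18]{harperhess13}), so $X$ is already cofibrant as an orthogonal spectrum, and then the comparison $X \to \eta^*\eta_* X$ is computed by the two-sided bar construction $B(\Comm, \Eoneinfty, X)$, which is a weak equivalence because $\eta$ is a levelwise equivalence of $\Sigma$-cofibrant operads acting on a cofibrant object and the monoidal model category $\spO_\bQ$ is flat enough that $(-)\wedge_{\Sigma_n}(-)$ is homotopically well-behaved on such inputs.

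The main obstacle is exactly this last homotopical rectification step: one must know that passing from $\Eoneinfty$-algebras to $\Comm$-algebras does not change the homotopy type, which a priori can fail (this is the very phenomenon the introduction warns about in the equivariant world). What makes it work here is that we are non-equivariant and rational: rationally the relevant $\Sigma_n$-homotopy orbits $E\Sigma_n{}_+\wedge_{\Sigma_n}X^{\wedge n}$ agree with strict orbits $X^{\wedge n}/\Sigma_n$ on cofibrant $X$ (the transfer argument / $\bQ[\Sigma_n]$ being semisimple), so the bar construction collapses. Since all of this is already packaged in \cite[Lemma 6.2]{BGKfinite}, in practice I would keep the proof to a single paragraph: state the lifted model structures exist (citing \cite[Theorem A.2]{BGKfinite} and the non-equivariant version of Lemma \ref{lem:tspectraRmodules}), observe $(\eta_*,\eta^*)$ is a Quillen pair because $\eta^*$ commutes with the weak-equivalence-and-fibration-creating forgetful functors, and then quote \cite[Lemma 6.2]{BGKfinite} for the equivalence itself, noting that its hypotheses (a levelwise equivalence of admissible operads, forgetful functors preserving cofibrant objects, rational coefficients) are satisfied in our setting.
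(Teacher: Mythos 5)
The paper itself gives no proof of this lemma beyond the citation to \cite[Lemma 6.2]{BGKfinite}, and your proposal also defers to that reference, so the two agree; your additional sketch of the rectification argument is correct in its essentials (existence of the lifted model structures, $\eta^*$ creating fibrations and weak equivalences underlying, and the rational homotopy-orbit/strict-orbit comparison controlling $\eta_*$). The only small slip --- describing $\eta$ as an equivalence of $\Sigma$-cofibrant operads, when $\Comm$ is of course not $\Sigma$-cofibrant --- is one you immediately correct in the following sentence, and rationality of coefficients is indeed exactly what makes the bar construction $B(\Comm,\Eoneinfty,X)$ compute $\eta_*$ on a cofibrant $X$.
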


\begin{lemma}There is an adjunction
\[
\adjunction{\eta_*}
{\algin{\Eoneinfty}{K_{\Top}^{\torus} \cell (\bbSfork_{\Top})^\bT \leftmod}}
{\algin{\Comm}{K_{\Top}^{\torus} \cell (\bbSfork_{\Top})^\bT \leftmod}}
{\eta^*}
\]
induced by the map of operads in $\Top_*$, $\eta: \Eoneinfty \lra \Comm$. This adjunction is a Quillen equivalence with respect to right-induced model structures from $K_{\Top}^{\torus} \cell (\bbSfork_{\Top})^\bT \leftmod$.
\end{lemma}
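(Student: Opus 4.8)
The plan is to propagate the non-equivariant comparison of Lemma~\ref{lem:einftocomm} through the extra structure separating $\Sp_\bQ$ from $K_{\Top}^{\torus}\cell(\bbSfork_{\Top})^\bT\leftmod$: first to modules over a single commutative ring spectrum, then to modules over the $\pscr$--diagram $(\bbSfork_{\Top})^\bT$, and finally through the cellularisation. Write $(\bbSfork_{\Top})^\bT=(R_1\to R_2\leftarrow R_3)$ with $R_1=D\efp^\bT$, $R_2=(\siftyV{\bT}\sm D\efp)^\bT$ and $R_3=(\siftyV{\bT})^\bT$, all commutative ring spectra. As both left adjoints of this diagram of ring spectra are strong symmetric monoidal, $\algin{\mcO}{(\bbSfork_{\Top})^\bT\leftmod}$ is the category of modules over the $\pscr$--diagram of model categories with vertices $\algin{\mcO}{R_i\leftmod}$, and the required right--lifted model structures exist for both $\mcO=\Eoneinfty$ and $\mcO=\Comm$: at the vertices by Lemma~\ref{lem:R-mod_lift}, on the diagram by Lemma~\ref{lem:generalised_diag_lift}, and on the cellularisation by Remark~\ref{rem:cellularisation_lift}. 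The operad map $\eta\colon\Eoneinfty\to\Comm$ induces the adjunction $(\eta_*,\eta^*)$ at each vertex; since $\eta^*$ is restriction of algebra structure it commutes with every forgetful functor in sight, hence $\eta^*$ preserves and reflects both weak equivalences and fibrations at every stage, so in particular $(\eta_*,\eta^*)$ is a Quillen adjunction. Because $\eta^*$ preserves weak equivalences, to obtain a Quillen equivalence it is enough to check that the unit $A\to\eta^*\eta_*A$ is a weak equivalence for every cofibrant $A$.

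Next I would reduce to the vertices. The $(\eta_*,\eta^*)$ assemble into a map of $\pscr$--diagrams of model categories --- the two composites of left adjoints agree because the corresponding composites of right adjoints, namely restriction along $\eta$ and restriction of scalars, manifestly commute --- so by Lemma~\ref{lemma:QEondiag} it suffices to show that for each $i$ the adjunction between $\algin{\Eoneinfty}{R_i\leftmod}$ and $\algin{\Comm}{R_i\leftmod}$ is a Quillen equivalence, after which one descends along the cellularisation. The vertex statement is the $R_i$--module analogue of Lemma~\ref{lem:einftocomm}, obtained by running the rectification argument of \cite[Lemma~6.2]{BGKfinite} inside the rational symmetric monoidal model category $R_i\leftmod$ in place of $\Sp_\bQ$ (equivalently, by deducing it from Lemma~\ref{lem:einftocomm} through base change along the weak equivalence $\eta_*\eta^*R_i\simeq R_i$, after cofibrant replacement, using that base change along a weak equivalence of commutative ring spectra is a Quillen equivalence). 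Once the uncellularised diagram statement $\algin{\Eoneinfty}{(\bbSfork_{\Top})^\bT\leftmod}\simeq\algin{\Comm}{(\bbSfork_{\Top})^\bT\leftmod}$ is in place, the cellularisation is harmless: a cofibrant $\Eoneinfty$--algebra in $K_{\Top}^\torus\cell(\bbSfork_{\Top})^\bT\leftmod$ is a fortiori cofibrant in the uncellularised category, every stable equivalence is a $K_{\Top}^\torus$--cellular equivalence, and $\eta^*$ preserves and reflects $K_{\Top}^\torus$--cellular equivalences since it leaves the underlying diagram of module spectra untouched; so the unit, already a stable equivalence by the uncellularised result, is a $K_{\Top}^\torus$--cellular equivalence. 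Alternatively one simply invokes the Cellularization Principle of \cite{gscell}.

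The only genuinely non--formal step is the vertex comparison, and within it the one place where rationality is essential: for a sufficiently cofibrant $\Eoneinfty$--algebra $A$ in $R_i\leftmod$ one needs the canonical map $\mcO(n)_+\sm_{\Sigma_n}A^{\sm_{R_i}^n}\to A^{\sm_{R_i}^n}/\Sigma_n$ to be a rational equivalence --- that is, that the $\Sigma_n$--homotopy orbits of the $R_i$--module smash powers compute the strict orbits after rationalisation, by the usual transfer/averaging argument in characteristic zero. Feeding this into an induction over the skeletal filtration used to build free $\Eoneinfty$--algebras (the same filtration that appears in the proof of Lemma~\ref{lem:tspectraRmodules}) shows that $A\to\eta^*\eta_*A$ is a rational equivalence, which is the heart of the matter. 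Everything else --- existence and interaction of the lifted and cellularised model structures, objectwise detection of cofibrations and weak equivalences in the diagram category, and the compatibility of $\eta$ with the diagram structure --- is formal and is already packaged in Lemmas~\ref{lem:liftQE}, \ref{lem:generalised_diag_lift}, \ref{lem:R-mod_lift} and Remark~\ref{rem:cellularisation_lift}.
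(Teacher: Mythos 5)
The paper states this lemma without proof, clearly intending the reader to assemble it from the machinery it has just set up; your argument is precisely that assembly, and it is correct. You take the same route the paper implicitly has in mind: decompose $(\bbSfork_{\Top})^\bT$ into the three vertex rings, obtain the lifted model structures on $\Eoneinfty$- and $\Comm$-algebras over each vertex (Lemma~\ref{lem:R-mod_lift}), on the diagram category (Lemma~\ref{lem:generalised_diag_lift}), and on the cellularisation (Remark~\ref{rem:cellularisation_lift}); observe that $(\eta_*,\eta^*)$ is a map of $\pscr$-diagrams because restriction along $\eta$ commutes with restriction of scalars; reduce to the vertex comparison via Lemma~\ref{lemma:QEondiag}; settle the vertex case by the $R_i$-module version of Lemma~\ref{lem:einftocomm}; and then push through the cellularisation using that $\eta^*$ creates weak equivalences objectwise so cellular equivalences are preserved and reflected. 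Your identification of the essential non-formal input --- that rationally, homotopy $\Sigma_n$-orbits of smash powers agree with strict orbits, fed through a cell induction --- is exactly the content hiding inside the cited \cite[Lemma 6.2]{BGKfinite}. One small point of phrasing: the map $\mcO(n)_+\sm_{\Sigma_n}A^{\sm_{R_i}^n}\to A^{\sm_{R_i}^n}/\Sigma_n$ you highlight is not literally the unit of the $(\eta_*,\eta^*)$ adjunction, but rather the $n$th summand of the comparison of free functors $F_{\Eoneinfty}\to F_{\Comm}$ evaluated on the underlying module; the cell induction you mention is what converts this free-algebra comparison into the statement about the derived unit on arbitrary cofibrants. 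With that gloss your proof is complete.
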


Another application of  Lemma \ref{lem:liftQE} 
we get two results.
\begin{lemma}
There is a Quillen equivalence
\[
\adjunction{\mathbb{P} \circ |-|}
{\algin{\Comm}{   \spS_\bQ    }}
{\algin{\Comm}{   \mathrm{Sp}_\bQ  }}
{\mathrm{Sing}\circ \mathbb{U}}
\]
where both model structures are right induced from the rational positive stable model structures on $\spS$ and $\mathrm{Sp}$ respectively.
\end{lemma}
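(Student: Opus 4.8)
The plan is to obtain this lemma as another instance of Lemma \ref{lem:liftQE}, applied to the strong symmetric monoidal Quillen equivalence between rational symmetric and orthogonal spectra that underlies the displayed adjunction. First I would recall that underlying statement: writing $|-|$ for the geometric realization functor from symmetric spectra in simplicial sets to symmetric spectra in spaces, and $\mathbb{P}$ for the prolongation functor from symmetric spectra in spaces to orthogonal spectra, the composite $L = \mathbb{P}\circ|-|$ is left adjoint to $\mathrm{Sing}\circ\mathbb{U}$ and is a Quillen equivalence for the positive stable model structures; left Bousfield localizing each side at the rational equivalences gives the rational version used here. Both $|-|$ and $\mathbb{P}$ are strong symmetric monoidal --- realization because it commutes with the levelwise smash, the prolongation functor because it is induced by the (symmetric monoidal) inclusion of indexing categories and is strong symmetric monoidal for the Day convolution products, cf.\ \cite{mm02} --- so $L$ is a strong symmetric monoidal left Quillen equivalence, tensored over $\sset$. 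This puts the structural hypotheses of Lemma \ref{lem:liftQE} in place, with $\mcO = \Comm$ and $\bV = \sset$.

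It then remains to check the two homotopical hypotheses of Lemma \ref{lem:liftQE}. The right-lifted cofibrantly generated model structure on $\algin{\Comm}{\spO_\bQ}$ is the $R = \bS$ case of Lemma \ref{lem:R-mod_lift}, and the corresponding structure on $\algin{\Comm}{\spS_\bQ}$ is obtained by the symmetric-spectra version of the same argument (or directly from \cite{Shipley_convenient}). Finally one must verify that the forgetful functor $U \co \algin{\Comm}{\spS_\bQ} \to \spS_\bQ$ preserves cofibrant objects; this is the symmetric-spectra counterpart of the fact used in the proof of Lemma \ref{lem:tspectraRmodules} via \cite[Theorem 5.18]{harperhess13}, and it is precisely the step where the \emph{positive} model structure is essential, the point being that on positively cofibrant objects the symmetric groups act freely enough on smash powers that the free commutative monoid functor preserves cofibrations and acyclic cofibrations. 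Granting this, Lemma \ref{lem:liftQE} yields the asserted Quillen equivalence.

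The main obstacle is this last point: recording, with a clean reference, that commutative monoids in rational symmetric spectra with the positive stable model structure forget to cofibrant objects. This is a known but genuinely subtle feature of symmetric spectra --- it fails without the positivity hypothesis, and it is the reason the entire chain of comparisons is set up using positive model structures --- but once it is in hand the rest is a formal application of Lemma \ref{lem:liftQE}, exactly parallel to the earlier uses of that lemma in this section.
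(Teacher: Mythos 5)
Your overall strategy --- lifting the geometric realization/prolongation Quillen equivalence to $\Comm$-algebras via Lemma \ref{lem:liftQE} --- is indeed what the paper does (it offers no proof beyond ``another application of Lemma \ref{lem:liftQE}''). The structural points you list are fine: $\mathbb{P}\circ|-|$ is strong symmetric monoidal and tensored over $\sset$, and the transferred model structures exist by Lemma \ref{lem:R-mod_lift} and its symmetric-spectra analogue.

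The gap is in your verification of the remaining hypothesis: you assert that $U\co\algin{\Comm}{\spS_\bQ}\to\spS_\bQ$ preserves cofibrant objects and lean on \cite[Theorem 5.18]{harperhess13}. Both parts are wrong for the commutative operad. Harper--Hess Theorem 5.18 requires the operad to be suitably ($\Sigma$-)cofibrant, and $\Comm$ is the paradigm of an operad that fails this --- that failure is precisely the source of the difficulty with strictly commutative ring spectra. The claim itself is also false as stated: the initial $\Comm$-algebra is the sphere $\bS$, which is automatically cofibrant in the transferred model structure but is not positively cofibrant in $\spS_\bQ$ (its zeroth level is $S^0$, not a point), so $U$ does not preserve cofibrant objects even with the positivity hypothesis. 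Positivity is needed for a different reason --- so that the transferred model structure on $\Comm$-algebras exists at all, and so that the free commutative monoid functor is homotopically well behaved --- not to make $U$ preserve cofibrations.

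What is actually true, and what makes the argument go through, is weaker: by Shipley \cite{Shipley_convenient}, the underlying symmetric spectrum of a cofibrant $\Comm$-algebra in the positive (flat) stable model structure is flat ($S$-cofibrant), though not cofibrant. Since $\mathbb{P}\circ|-|$ preserves weak equivalences between flat symmetric spectra, the derived functor of $L$ is still computed correctly on the underlying object of a cofibrant $\Comm$-algebra, and the Quillen equivalence of $\Comm$-algebra categories follows. So the conclusion you want is correct and the route is the right one, but the ingredient you invoke (cofibrancy preservation via Harper--Hess) does not apply to $\Comm$; the ingredient that does the work is flatness.
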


\begin{proposition}
There is a Quillen equivalence
\[
\adjunction{H\bQ\wedge -}
{\algin{\Comm}{   \spS_\bQ   }}
{\algin{\Comm}{   (H\bQ \leftmod) }}
{U}.
\]
\end{proposition}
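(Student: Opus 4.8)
The plan is to apply Lemma~\ref{lem:liftQE} to the adjunction $(H\bQ \wedge -, U)$ between $\spS_\bQ$ and $H\bQ\leftmod$. Recall that this adjunction is a Quillen equivalence when $\spS_\bQ$ carries the positive stable model structure and $H\bQ\leftmod$ carries the positive flat stable model structure of \cite{Shipley_convenient}; this is recorded in the remark preceding Lemma~\ref{lem:einftocomm}. The left adjoint $H\bQ\wedge -$ is strong symmetric monoidal, and both categories are tensored over $\sset$ (or over symmetric spectra), with $H\bQ \wedge -$ respecting this tensoring. So the structural hypotheses of Lemma~\ref{lem:liftQE} are met once we verify the two remaining ingredients: (i) the categories of $\Comm$-algebras in $\spS_\bQ$ and in $H\bQ\leftmod$ admit right-lifted cofibrantly generated model structures, and (ii) the forgetful functor $U_{\spS_\bQ}\colon \algin{\Comm}{\spS_\bQ} \to \spS_\bQ$ preserves cofibrant objects.

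For (i), the lifted model structure on $\Comm$-algebras in rational symmetric spectra is one of the cases flagged as ``similar'' in Lemma~\ref{lem:R-mod_lift}; it follows from \cite[Theorem A.2]{BGKfinite} applied to symmetric spectra with the positive stable model structure, exactly as the orthogonal case there. For $\Comm$-algebras in $H\bQ\leftmod$, since $H\bQ$ is a commutative ring spectrum, commutative algebras in $H\bQ\leftmod$ are the same as commutative $H\bQ$-algebras, i.e.\ commutative algebras under $H\bQ$, so the existence of the lifted structure again reduces to the symmetric-spectra statement in Lemma~\ref{lem:R-mod_lift} (the final clause of that lemma's proof covers exactly the $H\bQ$-module situation). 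For (ii), the key input is that over $\bQ$ the symmetric group actions are ``invisible'': the positive (flat) rational model structures on symmetric spectra are such that the forgetful functor from commutative algebras preserves cofibrations of algebras with cofibrant underlying object — this is the rational analogue of the results invoked for the $\Eoneinfty$-case, and is where \cite[Section~III.5]{mm02}-style arguments or the rational flatness results of \cite{Shipley_convenient, richtershipley} do the work. Concretely, one checks that a cofibration of $\Comm$-algebras with cofibrant domain forgets to a cofibration in $\spS_\bQ$, using that the $n$-th symmetric power functor is homotopically well-behaved rationally because $(-)_{h\Sigma_n} \simeq (-)^{\Sigma_n}$ and $H_*(\Sigma_n;\bQ)$ is concentrated in degree $0$.

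Granting (i) and (ii), Lemma~\ref{lem:liftQE} immediately yields that the lifted adjunction $(H\bQ\wedge -, U)$ is a Quillen equivalence $\algin{\Comm}{\spS_\bQ} \simeq \algin{\Comm}{H\bQ\leftmod}$, which is the assertion. I would also remark that this is precisely the starting point for the Richter--Shipley comparison \cite{richtershipley}, so that the composite of this equivalence with theirs lands in $\Comm$-algebras in $\ch(\bQ)$; but that composite belongs to a later step and is not needed here.

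The main obstacle I anticipate is (ii), verifying that the forgetful functor from $\Comm$-algebras preserves cofibrant objects. In the non-rational setting this typically fails — it is exactly the phenomenon that makes strictly commutative ring spectra subtle — so the proof must genuinely use rationalization, and one has to be careful about which flavour of positive model structure (positive stable vs.\ positive flat/convenient) is in play on each side so that the relevant symmetric powers are flat. Once the correct model structures are fixed (positive stable on $\spS_\bQ$, positive flat stable on $H\bQ\leftmod$, matching the remark before Lemma~\ref{lem:einftocomm}), this reduces to the rational flatness statements already established in the literature cited above, so it is more a matter of careful bookkeeping than of new mathematics.
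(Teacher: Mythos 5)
Your proposal takes essentially the same approach as the paper: the Proposition is stated there without a separate proof as one of two consequences of ``Another application of Lemma~\ref{lem:liftQE}'', and your verification of that lemma's hypotheses --- lifted model structures via Lemma~\ref{lem:R-mod_lift} and \cite[Theorem~A.2]{BGKfinite}, the positive stable model structure on $\spS_\bQ$ against the positive flat stable structure on $H\bQ\leftmod$ as in the remark preceding Lemma~\ref{lem:einftocomm}, and cofibrant-preservation by the forgetful functor on the source --- is exactly what the paper intends. Your instinct that cofibrant-preservation for $U\colon\algin{\Comm}{\spS_\bQ}\to\spS_\bQ$ is the delicate point is sound; the paper is equally brief there and, like you, defers to the rational flatness results in \cite{Shipley_convenient, richtershipley, BGKfinite}.
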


These results lift to the level of diagrams of modules over a fork of commutative rings used in Section \ref{sec:algModTsp} and so does the following result.

\begin{theorem}[{\cite[Corollary 8.4]{richtershipley}}]
There is a zig-zag of Quillen equivalences between the model category of commutative
$H \bQ$--algebras (in symmetric spectra) and differential graded commutative $\bQ$--algebras.
\end{theorem}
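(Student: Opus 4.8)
The plan is to recall the argument of Richter and Shipley \cite{richtershipley}, of which this statement is Corollary 8.4; I sketch its shape. The point of departure is Shipley's zig-zag of Quillen equivalences from \cite{shiHZ} relating $H\bQ\leftmod$ (in symmetric spectra) to the category $\Ch(\bQ)$ of unbounded rational chain complexes. After rationalising, every category appearing in that zig-zag is the category of modules over a commutative monoid in a symmetric monoidal model category, and every functor is \emph{lax} symmetric monoidal --- some are strong, such as $H\bQ\wedge-$ and the equivalence of $\Ch(\bQ)$ with modules over the Eilenberg--Mac Lane object in symmetric spectra in chain complexes, while the Dold--Kan normalisation carries only the lax (shuffle) structure. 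Since a lax symmetric monoidal functor sends monoids to monoids, applying the zig-zag levelwise to commutative monoids yields an induced zig-zag of adjunctions whose right-hand endpoint is the category of commutative monoids in $\Ch(\bQ)$, i.e. commutative differential graded $\bQ$-algebras.

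The substance is to promote this induced zig-zag to one of \emph{Quillen equivalences}, and this is done in exactly the style of the present section. First one equips each intermediate category with a ``convenient'' positive/flat stable model structure (following \cite{Shipley_convenient}), so that the relevant free-algebra monad interacts well with cofibrations and Kan's lemma (Lemma \ref{lem:liftedmodel}) produces right-lifted model structures on algebras at every stage. Here it is cleanest to carry $\einf$-algebras through the zig-zag --- for $\einf$-operads the lift always exists (compare Proposition \ref{prop:liftingToOalg} and the non-equivariant analogue of Lemma \ref{lem:tspectraRmodules}) --- and to observe at each stage that the forgetful functor preserves cofibrant objects, via the Harper--Hess analysis of pushouts of free algebras \cite{harperhess13} or the EKMM filtration \cite{EKMM97}. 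With that in hand Lemma \ref{lem:liftQE} applies at each stage, and the composite is a zig-zag of Quillen equivalences between $\einf$-algebras in $H\bQ\leftmod$ and $\einf$-algebras in $\Ch(\bQ)$.

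It remains to pass between $\einf$ and $\Comm$ at the two ends. On the spectral side this is Lemma \ref{lem:einftocomm}. On the algebraic side one uses that over a field of characteristic zero the restriction-of-structure functor from commutative DGAs to $\einf$-algebras in $\Ch(\bQ)$ is the right half of a Quillen equivalence (rational rectification of $\einf$-algebras, equivalently rational formality of the $\einf$-operad). Splicing these two links onto the zig-zag of the previous paragraph gives the required zig-zag of Quillen equivalences between commutative $H\bQ$-algebras and commutative differential graded $\bQ$-algebras.

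The main obstacle is precisely the phenomenon emphasised in the introduction: a symmetric monoidal Quillen equivalence need not descend to commutative (or $\einf$-) algebras, because a lax non-strong symmetric monoidal functor --- here the normalisation with its shuffle transformation --- need not preserve cofibrant algebras, and because the lift of a weak equivalence of algebras need not remain a weak equivalence after applying the free-algebra monad. Circumventing this forces one to choose the flat model structures with care at \emph{every} stage so that the cofibrancy hypothesis of Lemma \ref{lem:liftQE} holds and so that the generating (acyclic) cofibrations of algebras are sent to weak equivalences all the way down; verifying this stage by stage is the technical heart of \cite{richtershipley}, and for our purposes it may simply be quoted.
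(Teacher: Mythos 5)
The paper gives no proof here: it simply quotes the statement as Corollary~8.4 of Richter and Shipley \cite{richtershipley}, and you do the same, so on the level of what is actually required your proposal agrees with the paper. Your additional paragraphs reconstruct the shape of the cited argument accurately: lift Shipley's zig-zag \cite{shiHZ} to $\einf$-algebras stage by stage using right-lifted positive/flat model structures, then rectify to $\Comm$ on the spectral side (as in Lemma~\ref{lem:einftocomm}) and to commutative DGAs on the chain side via rational rectification of $\einf$-algebras. One small imprecision worth flagging: you describe the main obstacle as ``precisely the phenomenon emphasised in the introduction,'' but the introduction's warning is about the loss of equivariant norm structure under idempotent splitting for finite groups, which is a distinct issue from the lax-but-not-strong symmetric monoidality of Dold--Kan normalization and the cofibrancy bookkeeping that Richter and Shipley actually navigate; your account of that latter obstacle is the right one for \cite{richtershipley}, it just should not be identified with the equivariant-norms phenomenon.
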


The model structure on commutative $H \bQ$--algebras is lifted from the positive
flat stable model structure on symmetric spectra, see for example \cite{Shipley_convenient}. The model structure on
differential graded commutative $\bQ$--algebras has fibrations the surjections
and weak equivalences the homology isomorphisms.

We can summarise the above in the next lemma, where $\bbSfork_t$ is a diagram of commutative rings obtained from $(\bbSfork_{\Top})^\bT$ by applying the result of \cite{shiHZ}. 
\begin{lemma} The adjunction
\[
\xymatrix@R=1pc @C=7pc{
& \algin{\Eoneinfty}{K_{\Top}^{\torus} \cell (\bbSfork_{\Top})^{\bT} \leftmod}
\ar@<+0ex>[dd]^{\mathrm{of\ Quillen\ equivalences}} & **[l] \mathrm{in\ Sp}
\\ \  \\
& \algin{\Comm}{K_t \cell \bbSfork_t \leftdmod}
\ar@<+0ex>[uu]^{\mathrm{zig-zag}} & **[l] \mathrm{in\ Ch}(\bQ)
}
\]
is a Quillen equivalence.
\end{lemma}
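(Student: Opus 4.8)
The plan is to exhibit the displayed zig-zag as the concatenation of Quillen equivalences already assembled in this section, verifying at each stage the hypotheses of Lemma~\ref{lem:liftQE} and Lemma~\ref{lemma:QEondiag} and using Remark~\ref{rem:cellularisation_lift} to carry the cellularisations along. The first stage is the operad change. Lemma~\ref{lem:einftocomm} gives a Quillen equivalence $\eta_*\dashv\eta^*$ between $\Eoneinfty$- and $\Comm$-algebras in $\Sp_\bQ$; by Lemma~\ref{lem:R-mod_lift} the analogous lifted model structures exist on $\Eoneinfty$- and $\Comm$-algebras in $R\leftmod$ for each commutative ring spectrum $R$ occurring as a vertex of $(\bbSfork_{\Top})^{\bT}$, and Lemma~\ref{lem:generalised_diag_lift} then produces the right-lifted model structures on $\Eoneinfty$- and $\Comm$-algebras in the diagram category $(\bbSfork_{\Top})^{\bT}\leftmod$ (its filtered-colimit hypothesis is automatic, since all the relevant functors are defined objectwise and the free-algebra functors commute with filtered colimits). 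Since $\eta_*$ is strong symmetric monoidal and the forgetful functors from $\Eoneinfty$- and $\Comm$-algebras preserve cofibrant objects (Lemma~\ref{lem:R-mod_lift} together with the cofibrancy statement of Lemma~\ref{lem:tspectraRmodules}), Lemma~\ref{lem:liftQE} applied objectwise and assembled by Lemma~\ref{lemma:QEondiag} gives a Quillen equivalence of the two diagram categories of algebras; Remark~\ref{rem:cellularisation_lift}, and a further application of Lemma~\ref{lem:liftQE} to the underlying cellularised equivalence, transports it to the $K_{\Top}^{\torus}$-cellularised structures, the cells on the $\Comm$-side being by definition the derived images of those on the $\Eoneinfty$-side.

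For the second stage I would run the chain of symmetric monoidal Quillen equivalences for $\Comm$-algebras set up immediately above, passing from orthogonal spectra to symmetric spectra, thence to $H\bQ$-modules in symmetric spectra, and finally --- via the Richter--Shipley zig-zag \cite{richtershipley} --- to differential graded commutative $\bQ$-algebras. Each left adjoint in this chain is strong symmetric monoidal, and the relevant forgetful functors from $\Comm$-algebras preserve cofibrant objects, by Lemma~\ref{lem:R-mod_lift} and its evident counterparts for $H\bQ$-modules and chain complexes (compare the cofibrancy statements of Lemma~\ref{lem:tspectraRmodules} and \cite[Corollary~8.4]{richtershipley}). So Lemma~\ref{lem:liftQE} applies at every step; lifting to the fork by Lemma~\ref{lemma:QEondiag} and cellularising by Remark~\ref{rem:cellularisation_lift} then yields a zig-zag
\[
\algin{\Comm}{K_{\Top}^{\torus}\cell(\bbSfork_{\Top})^{\bT}\leftmod}
\ \simeq\
\algin{\Comm}{K_t\cell \bbSfork_t\leftdmod}
\]
of Quillen equivalences, where $\bbSfork_t$ and $K_t$ are the images of $(\bbSfork_{\Top})^{\bT}$ and $K_{\Top}^{\torus}$ under \cite{shiHZ}. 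Composing with the operad-change equivalence of the first stage gives the assertion.

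The main obstacle is bookkeeping rather than a new idea: one must check, step by step, that every left Quillen functor in the zig-zag of Section~\ref{sec:algModTsp} up to $H\bQ\leftmod$ is genuinely \emph{strong} symmetric monoidal --- so that it induces a functor on $\mcO$-algebras --- and that the forgetful functor from $\mcO$-algebras preserves cofibrant objects at each vertex, which is exactly the input Lemma~\ref{lem:liftQE} demands and which Lemmas~\ref{lem:tspectraRmodules} and \ref{lem:R-mod_lift} are designed to furnish. Two further points need care: the Richter--Shipley equivalence is itself a zig-zag through auxiliary model categories, so these checks must be made for each intermediate adjunction and not just the endpoints; and one must confirm that the cellularisations match along every arrow --- equivalently, that the cell objects $K_t$ really are the derived images of $K_{\Top}^{\torus}$ --- so that Remark~\ref{rem:cellularisation_lift} and the cellularised form of Lemma~\ref{lem:liftQE} apply. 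Once these are in place, Lemma~\ref{lemma:QEondiag} does the rest formally.
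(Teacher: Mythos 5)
Your overall route is the same as the paper's: perform the operad change from $\Eoneinfty$ to $\Comm$ first, then run the non-equivariant chain of symmetric monoidal Quillen equivalences ($\Sp_\bQ \to \spS_\bQ \to H\bQ\leftmod \to$ Richter--Shipley) at the level of $\Comm$-algebras, lifting everything to the fork and cellularising as you go. The second stage is fine as written and matches the paper.

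There is, however, a mismatch in the first stage. You invoke Lemma~\ref{lem:liftQE} ``applied objectwise'' to obtain the operad-change Quillen equivalence between $\algin{\Eoneinfty}{(\bbSfork_{\Top})^\bT\leftmod}$ and $\algin{\Comm}{(\bbSfork_{\Top})^\bT\leftmod}$. But Lemma~\ref{lem:liftQE} is designed for a different situation: it takes a Quillen equivalence $L\dashv R$ \emph{between two base categories} $\bC$ and $\bD$ and lifts it to $\mcO$-algebras \emph{for one fixed operad} $\mcO$. The operad change $\eta_*\dashv\eta^*$, by contrast, is an adjunction between $\Eoneinfty$-algebras and $\Comm$-algebras \emph{over the same base category}; it is not a lift of any Quillen equivalence of the underlying module categories, so Lemma~\ref{lem:liftQE} cannot be applied to it. The statement you want is genuine, and the paper records it as the (unproved) lemma immediately following Lemma~\ref{lem:einftocomm}; the correct justification is to run the argument of Lemma~\ref{lem:einftocomm} (i.e.\ \cite[Lemma 6.2]{BGKfinite}) directly in each category of $R$-modules, using the identification $\algin{\mcO}{R\leftmodin\Sp_\bQ}\cong R\leftmodin\algin{\mcO}{\Sp_\bQ}$ supplied by the distributivity law, and then to assemble across the fork by Lemma~\ref{lemma:QEondiag} and cellularise via Remark~\ref{rem:cellularisation_lift}. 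So the gap is a wrong citation rather than a missing idea, but as written the first stage does not actually establish the operad-change equivalence at the level needed.
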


The formality argument of certain commutative rings and 
the removal of cellularisation of Section \ref{sec:algModTsp}
passes to Quillen equivalences as below, via Lemma \ref{lem:liftQE}.
\[
\xymatrix@R=1pc @C=7pc{
& \algin{\Comm}{K_t \cell \bbSfork_t \leftdmod}
\ar@<+0ex>[dd]^{\mathrm{of\ Quillen\ equivalences}} & **[l] \mathrm{in\ Ch}(\bQ)
\\ \  \\
& \algin{\Comm}{K_a  \cell \bbSfork_a \leftdmod}
\ar@<+0ex>[uu]^{\mathrm{zig-zag}}
\ar@<+1ex>[dd]^{\Gamma} & **[l] \mathrm{in\ Ch}(\bQ)
\\ \ \\
& \algin{\Comm}{d \mcA(\mathbb{T})_{dual}}
\ar@<+1ex>[uu]^{\mu} & **[l] \mathrm{in\ Ch}(\bQ)}
\]

We collect all the Quillen equivalences from this section into one result.
\begin{theorem}\label{thm:fullcompare}
There is a zig-zag of Quillen equivalences
\[
\algin{\Eoneinfty}{  (\TSP_\bQ)   }
\ \simeq \
\algin{\Comm} d\mcA(\torus),
\]
where $ d\mcA(\torus)$ denotes the algebraic model for rational $\torus$-spectra from \cite{gre99, BGKS}.
\end{theorem}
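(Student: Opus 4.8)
The plan is to assemble Theorem \ref{thm:fullcompare} by concatenating the individual Quillen equivalences established earlier in this section, exactly mirroring the zig-zag diagram displayed in the introduction. First I would record the top three stages: starting from $\algin{\Eoneinfty}{\TSP_\bQ}$, the Cellularization Principle identifies this with $\algin{\Eoneinfty}{K_{\Top} \cell \bbSfork_{\Top} \leftmod}$ via the pullback/$\bbSfork\smashprod-$ adjunction (using Lemmas \ref{lem:liftQE}, \ref{lem:generalised_diag_lift} and Remark \ref{rem:cellularisation_lift} to know the lifted model structures exist and the lifted adjunction is a Quillen equivalence, since $U_\bC$ preserves cofibrant objects by Lemma \ref{lem:tspectraRmodules}); then taking $\torus$-fixed points via $\zeta_\#$ passes to $\algin{\Eoneinfty}{K_{\Top}^\torus \cell (\bbSfork_{\Top})^\bT \leftmod}$, now in non-equivariant spectra.

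Next I would invoke the change-of-operad step: Lemma \ref{lem:einftocomm} (and its diagram-level counterpart stated just after it) gives a Quillen equivalence $\algin{\Eoneinfty}{K_{\Top}^\torus \cell (\bbSfork_{\Top})^\bT \leftmod} \simeq \algin{\Comm}{K_{\Top}^\torus \cell (\bbSfork_{\Top})^\bT \leftmod}$. From this point on everything is phrased for $\Comm$-algebras. I would then string together: the passage orthogonal $\to$ symmetric spectra $\to$ $H\bQ$-modules (the two lemmas on $\mathbb{P}\circ|-|$ and $H\bQ\wedge-$, lifted to the diagram-of-modules setting as noted), followed by the Richter--Shipley zig-zag (the cited \cite[Corollary 8.4]{richtershipley} and its module-diagram lift) to reach $\algin{\Comm}{K_t \cell \bbSfork_t \leftdmod}$ in $\Ch(\bQ)$ --- this is summarised already as ``The adjunction $\ldots$ is a Quillen equivalence.'' Finally I would apply the formality argument for the diagram of commutative rings $\bbSfork_t \simeq \bbSfork_a$, the simplification of cells $K_t \simeq K_a$, the functor $\Gamma$ with adjoint $\mu$, and the removal of the cellularisation via \cite[11.5]{tnqcore} --- each of these carried up to $\Comm$-algebras by Lemma \ref{lem:liftQE}, since at every stage the relevant left adjoint is strong symmetric monoidal and the forgetful functor preserves cofibrant objects --- to land at $\algin{\Comm}{d\mcA(\torus)_{dual}}$.

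The genuine content is not in the concatenation itself, which is bookkeeping, but in verifying at each stage that the hypotheses of Lemma \ref{lem:liftQE} hold: one needs the lifted model structures to exist on both sides (supplied by Lemmas \ref{lem:tspectraRmodules}, \ref{lem:generalised_diag_lift}, \ref{lem:R-mod_lift} and Remark \ref{rem:cellularisation_lift}), the underlying Quillen equivalence to already be known at the level of $\bC$ and $\bD$ (this is precisely Section \ref{sec:algModTsp}, run with positive stable model structures as noted in the Remark opening this section), and crucially the forgetful functor $U_\bC$ to preserve cofibrant objects. The main obstacle is the last point in the differential-graded range: for $\Comm$-algebras (as opposed to $\Eoneinfty$-algebras) the forgetful functor preserving cofibrant objects is exactly the kind of statement that fails in the equivariant world, so one must check it genuinely holds here --- which it does because we have already descended to non-equivariant chain complexes over $\bQ$, where cofibrant commutative differential graded $\bQ$-algebras have underlying cofibrant (in particular, degreewise projective) chain complexes, and the analogous statement for commutative $H\bQ$-algebras in the flat stable model structure is what makes \cite{richtershipley} applicable. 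Assembling these verifications stage by stage, and noting that the composite of the displayed zig-zags is the asserted zig-zag, completes the proof.
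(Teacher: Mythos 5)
Your proposal reproduces the paper's argument essentially verbatim: assemble the displayed zig-zag by lifting each Quillen equivalence of Section \ref{sec:algModTsp} to operad-algebras via Lemma \ref{lem:liftQE} (with the lifted model structures from Lemmas \ref{lem:tspectraRmodules}, \ref{lem:generalised_diag_lift}, \ref{lem:R-mod_lift} and Remark \ref{rem:cellularisation_lift}), switch from $\Eoneinfty$ to $\Comm$ once in non-equivariant spectra via Lemma \ref{lem:einftocomm}, pass through Richter--Shipley, and finish with the formality and decellularisation steps. The closing remarks about where the forgetful-functor-preserves-cofibrants hypothesis genuinely has content are a useful gloss but do not alter the route, which matches the paper.
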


We may extend the above results to modules over 
rational $\Eoneinfty$--ring $\bT$-spectra. 
Rather than give the somewhat complicated 
definition of a module over an algebra
over an operad (and the monoidal product of such modules), 
we use a different model category of spectra
and simply talk about modules over a commutative ring object. 

Recall a variant of the category of rational $\torus$--spectra: 
the category of unital $\mcL(1)$--modules in orthogonal $\bT$-spectra from \cite{BlumbergHillL1}, where $\mcL$ is the linear isometries operad for the universe $U=\bR^\infty$. Thus the additive universe is the complete $\bT$-universe and the multiplicative universe is taken to be $\bR^\infty$. 
This category will be denoted $\lspec$. 
Recall further that 
\[
\algin{\Comm}{\lspec}
\simeq
\algin{\Eoneinfty}{\TSP_\bQ},
\]
so that an $\Eoneinfty$--ring in rational orthogonal $\torus$--spectra has a strictly commutative in 
rational unital $\mcL(1)$--modules in orthogonal $\bT$-spectra, 
$\lspec$.

We use the fact that one may rewrite our zig-zag of Quillen equivalences between 
$\algin{\Eoneinfty}{\TSP_\bQ}$ and commutative rings in 
$d\mcA(\torus)$ to start from 
$\algin{\Comm}{\lspec}$
and use 
unital $\mcL(1)$--modules in orthogonal $\bT$-spectra
in place of $\TSP$, 
as discussed in Remark \ref{rmk:threechoices}.

\begin{corollary}\label{cor:modulesMQE} For any rational $\Eoneinfty$--ring $\bT$-spectrum $R$ there is a zig-zag of monoidal Quillen equivalences
\[
R\leftmodin{  (\lspec)  }
\ \simeq \
\widetilde{R}\leftmodin d\mcA(\torus),
\]
where $\widetilde{R}$ is the derived image of $R$ under the zig-zag of Quillen equivalences of Theorem \ref{thm:fullcompare}.
\end{corollary}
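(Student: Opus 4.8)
The plan is to transport $R$ along the module-level version of the zig-zag of Theorem~\ref{thm:fullcompare}. Using the rewriting discussed in Remark~\ref{rmk:threechoices}, I would regard that zig-zag as beginning at $\algin{\Comm}{\lspec}$, so that the $\Eoneinfty$-ring $R$ corresponds to a strictly commutative monoid $R'$ in $\lspec$. Choosing a cofibrant commutative monoid $R^c\to R'$ and using that module categories over weakly equivalent cofibrant monoids are Quillen equivalent (by the work of Schwede and Shipley on equivalences of monoidal model categories), we get $R'\leftmodin\lspec\simeq R^c\leftmodin\lspec$, so we may assume from the outset that $R$ is cofibrant as a commutative monoid.

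The heart of the argument is then an induction along the vertices of the zig-zag. Each vertex is of the form $\algin{\Comm}{\mcC_i}$ for a cellular monoidal model category $\mcC_i$ satisfying the monoid axiom, and each connecting functor is the lift to commutative algebras of a strong symmetric monoidal left Quillen functor $\ell_i\colon\mcC_i\to\mcC_{i+1}$ participating in a symmetric monoidal Quillen equivalence (this holds for all the steps up to $H\bQ\leftmod$), followed by the Richter--Shipley zig-zag \cite{richtershipley}. For a cofibrant commutative monoid $A$ in $\mcC_i$, base change along the unit map $A\to\ell_i A$ gives a Quillen pair $A\leftmodin\mcC_i\adjunct\ell_i A\leftmodin\mcC_{i+1}$ whose left adjoint is strong symmetric monoidal, since $\ell_i$ preserves the coequalizers defining the relative tensor products; this Quillen pair is a Quillen equivalence by the module-comparison theorem of Schwede and Shipley, exactly as Lemma~\ref{lem:liftQE} does at the level of $\mcO$-algebras. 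Iterating this, starting with $R$ in $\lspec$ and taking a cofibrant replacement of the commutative monoid produced at each stage whenever needed (again invoking invariance of module categories under weak equivalences), propagates $R$ through the whole zig-zag; the Richter--Shipley portion carries modules along by the module-level comparison that accompanies their statement (as in Shipley's treatment of $H\bZ$-algebras \cite{shiHZ}), and the formality step and the removal of cellularisation of Section~\ref{sec:algModTsp} likewise pass to modules because their left adjoints preserve cofibrant commutative monoids.

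Finally, the composite of the derived base-change functors sends $R$ to its derived image under the zig-zag of Theorem~\ref{thm:fullcompare}, that is, to $\widetilde{R}$, which is a genuine commutative monoid in $d\mcA(\torus)$ because $d\mcA(\torus)_{dual}$ is a monoidal model category \cite{barnesmonoidalmodelso2}, and $\widetilde{R}\leftmodin d\mcA(\torus)$ is then a monoidal model category by the usual monoid-axiom argument. Assembling the steps gives the asserted zig-zag of monoidal Quillen equivalences $R\leftmodin\lspec\simeq\widetilde{R}\leftmodin d\mcA(\torus)$.

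I expect the main obstacle to be bookkeeping rather than any new idea: the model categories $\lspec$, the positive stable categories of spectra, and $d\mcA(\torus)_{dual}$ do not have cofibrant monoidal units, so one cannot use the naive strong-monoidal-Quillen-pair statement and must instead check that each step is a weak symmetric monoidal Quillen equivalence satisfying Schwede and Shipley's unit condition, verify the monoid axiom at each vertex so that the module model structures exist, and choose the cofibrant replacements of the propagating ring object compatibly so that the final $\simeq$ really is a zig-zag. All of these ingredients are available in the cited literature; the content of the corollary is that they fit together along the particular zig-zag of Section~\ref{sec:algModTsp}.
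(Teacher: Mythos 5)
Your approach is essentially the same as the paper's: replace $R$ by a strictly commutative monoid in $\lspec$, then lift the zig-zag of Theorem~\ref{thm:fullcompare} (in its $\lspec$-based form) to categories of modules, invoking Schwede--Shipley's module comparison at each symmetric monoidal Quillen equivalence. The paper states this more tersely, citing Blumberg--Hill \cite[Theorem 4.13]{BlumbergHillL1} for the passage through unital $\cL(1)$-modules alongside Schwede--Shipley \cite[Theorem 3.12]{SchwedeShipleyMon}; your extra attention to cofibrant replacements and the non-cofibrant-unit issue is legitimate bookkeeping but does not change the argument.
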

\begin{proof}
Taking the symmetric monoidal category of $\bar{R}$-modules to be the starting point, we may lift the zig-zag of Quillen equivalences described just before the current corollary 
to categories of modules using Blumberg and Hill \cite[Theorem 4.13]{BlumbergHillL1} 
and Schwede and Shipley \cite[Theorem 3.12]{SchwedeShipleyMon}.
All steps of the zig-zag are symmetric monoidal, since the Quillen equivalences were so. The algebraic model $d\mcA(\torus)$ is taken with the dualizable model structure from \cite{barnesmonoidalmodelso2} as in the current paper.
The end of the zig-zag is then a symmetric monoidal category $\widetilde{R}\leftmodin d\mcA(\torus)$, where  $\widetilde{R}$ can be taken to be the derived image of $R$ under the zig-zag of Quillen equivalences of Theorem \ref{thm:fullcompare}, and hence is a commutative ring in $d\mcA(\torus)$.
\end{proof}

%%%%%%%%%%%%%%%%%%%%%%%%%%%%%%%%%%%%%%%%%%%%%%
\part{An application to equivariant elliptic cohomology}

The purpose of the second part is to apply the results of Part 1 to a 
case of particular interest. 

\section{An island of algebraic geometry in rational \texorpdfstring{$\T$}{T}-spectra}
\subsection{Overview}
As mentioned in the introduction, given an  elliptic curve $C$ over a
$\Q$-algebra $K$, together with some coordinate data,  one may
construct a $\T$-equivariant elliptic cohomology $EC$ associated to $C$
\cite{gre05}.  Using $C$ and the
coordinate data  one may write down an  object $EC_a$ of $d\cA (\T)$,
which by  \cite{gre99} corresponds to a  $\T$-spectrum $EC_{\Top}$,
and by \cite{BGKS} this may be taken to be a ring. The construction of $EC_a$ is recalled in  Subsection \ref{subsec:ECa}. Furthermore,  $d\cA (\T)$ is a symmetric 
monoidal category and $EC_a$ is visibly a commutative monoid. 

By Corollary \ref{cor:modulesMQE}, $EC_{\Top}$  
can be chosen to be an 
$\Eoneinfty$-algebra in rational $\T$-spectra, which for this part we take to be $\lspec$. This is the category of unital $\cL(1)$-modules in $\TSP_{\bQ}$, see Blumberg and Hill \cite{BlumbergHillL1}.
It therefore makes sense to discuss the monoidal model category of
$E_{\infty}^1$-modules over $EC_{\Top}$, and the following theorem states that it is
equivalent to a category of sheaves of quasi-coherent modules over
$C$. The exact definitions of the following categories will be given
as we proceed.

Since $EC$ is
2-periodic, we form a 2-periodic version $P\cOC$ of the structure
sheaf $\cOC$. The non-empty open sets of the usual Zariski topology on $C$ consist of the complements  of arbitrary finite sets of points, but we use the torsion point (tp) topology in which
only points of finite order may be deleted. The purpose of Part 2 is to prove the following result. 

\begin{theorem}
\label{cor:main}
There is a symmetric monoidal Quillen equivalence 
\[
EC_{\Top}\leftmodin\TSP_\bQ \simeq 
\textnormal{quasi-coherent} \endash P\cOC \leftdmodin 
\textnormal{sheaves}/C_{tp}
\]
giving an equivalence of tensor triangulated categories 
\[
\ho( EC_{\Top} \leftmodin \TSP_\bQ) \simeq  D(PC_{tp}), 
\]
where $\leftdmod$ denotes differential objects in graded modules. 
\end{theorem}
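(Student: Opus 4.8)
The plan is to combine the main comparison result of Part 1 (Theorem \ref{thm:fullcompare}, together with its module-level enhancement Corollary \ref{cor:modulesMQE}) with an explicit identification of the algebraic side. Concretely, I would proceed in three stages. First, apply Corollary \ref{cor:modulesMQE} to the $\Eoneinfty$--ring $\bT$--spectrum $R = EC_{\Top}$: this yields a zig-zag of symmetric monoidal Quillen equivalences
\[
EC_{\Top}\leftmodin\lspec \ \simeq\ \widetilde{EC_{\Top}}\leftmodin d\mcA(\torus),
\]
where $\widetilde{EC_{\Top}}$ is the derived image of $EC_{\Top}$ under the zig-zag of Theorem \ref{thm:fullcompare}. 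Since $EC_a$ is visibly a commutative monoid in $d\mcA(\torus)$ and corresponds to $EC_{\Top}$ under that zig-zag, we may identify $\widetilde{EC_{\Top}}$ with $EC_a$ (up to a monoidal weak equivalence of commutative monoids), so the right-hand side becomes $EC_a\leftmodin d\mcA(\torus)$ with its dualizable monoidal model structure.

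Second, I would identify $EC_a$--modules in $d\mcA(\torus)$ with the category of quasi-coherent $P\cOC$--sheaves on $C_{tp}$. This is where the algebraic geometry enters, and it is the heart of the proof: one needs to recall from \cite{gre05} the explicit description of $EC_a$ as a diagram of graded rings indexed by the torsion-point topology on $C$ (localizations of the structure sheaf at torsion points, the ``fork'' shape coming from the Tate square becoming, on the algebraic model, a statement about sections over open sets and their complements), and then check that a module over this diagram of rings is exactly a quasi-coherent sheaf of $P\cOC$--modules in the torsion-point topology. Here I would use the standard equivalence between quasi-coherent sheaves on a scheme and modules over the structure sheaf, adapted to the ``torsion point'' Grothendieck topology whose covers are determined by deleting finite sets of torsion points; the $2$-periodicity accounts for the passage from $\cOC$ to $P\cOC$, and the ``$\leftdmod$'' (differential objects in graded modules) matches the differential structure carried by objects of $d\mcA(\torus)$. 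One must be careful that the model structures match: the dualizable model structure on the module category on the topological side should correspond to the injective (or a suitably cellularized) model structure on quasi-coherent sheaves, and I would cite \cite{barnesmonoidalmodelso2} and the diagram-of-model-categories machinery of Section \ref{sec:diagramsmodcat} for this.

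Third, I would pass to homotopy categories: a symmetric monoidal Quillen equivalence induces an equivalence of the underlying tensor triangulated categories, so
\[
\ho(EC_{\Top}\leftmodin\TSP_\bQ)\ \simeq\ \ho\!\left(\textnormal{quasi-coherent}\endash P\cOC\leftdmodin\textnormal{sheaves}/C_{tp}\right)\ =:\ D(PC_{tp}),
\]
the last equality being the definition of the derived category of the torsion-point-topologized $2$-periodic elliptic curve. I should also remark that $\TSP_\bQ$ and $\lspec$ have equivalent homotopy categories of modules over a given commutative ring object, by the results recalled before Corollary \ref{cor:modulesMQE} (namely $\algin{\Comm}{\lspec}\simeq\algin{\Eoneinfty}{\TSP_\bQ}$), so working with $\lspec$ in place of $\TSP_\bQ$ loses nothing.

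The main obstacle is the second stage: establishing a precise, model-categorical dictionary between the diagram-of-modules description of $EC_a$--modules coming out of the algebraic model and the sheaf-theoretic category of quasi-coherent $P\cOC$--modules over $C_{tp}$. This requires unwinding the definition of $EC_a$ from \cite{gre05}, verifying that its underlying diagram of graded rings really does assemble to (a $2$-periodic, differential version of) the structure sheaf in the torsion-point topology, and matching up the relevant model structures (dualizable vs.\ injective/cellularized) so that the comparison is a genuine Quillen equivalence rather than merely an equivalence of homotopy categories. Everything else is either a direct appeal to Part 1 or a standard consequence of having a symmetric monoidal Quillen equivalence.
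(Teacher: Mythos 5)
Your overall strategy is the same as the paper's: use Corollary \ref{cor:modulesMQE} to reduce to $EC_a$-modules in $d\mcA(\T)$, identify that category with sheaves, and then pass to homotopy categories. Stages (1) and (3) are exactly right and are handled exactly as you say. The problem is in stage (2), which you correctly identify as the heart of the proof but which cannot be carried out by a direct appeal to ``the standard equivalence between quasi-coherent sheaves on a scheme and modules over the structure sheaf, adapted to the torsion point topology.'' That standard equivalence compares $\cOC$-modules with quasi-coherent sheaves on the same space, but the category that comes out of Corollary \ref{cor:modulesMQE} is not of that shape: a module over $EC_a$ in $d\mcA(\T)$ is a \emph{fork diagram} $N \to P \leftarrow V$ over the diagram of rings $\Gamma P\cOCfork$ (sections over the three corners of a Hasse square), and there is no direct comparison functor from this to $P\cOC \leftdmod$.

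The paper's proof of Theorem \ref{cor:main} is precisely the construction of the missing bridge, and it mirrors step by step the Hasse-square analysis used for $\T$-spectra in Section \ref{sec:algModTsp}. After an elementary reformulation to $d\cA(PC)$ (Lemma \ref{lem:ECamodcAPC}) and a desheafification of the skyscraper and constant sheaves $d\cA(PC)\simeq d\cA(P\cOC)$ (Lemma \ref{lem:desheaf}), the key steps are the Cellular Skeleton Theorem (Lemma \ref{lem:CSTS}), identifying the abelian category $d\cA(P\cOC)$ with the cellularization $\textnormal{cell}\endash P\cOCfork\leftdmod$, and then the Cellularization Principle (Lemma \ref{lem:PBfork}), recovering $P\cOC\leftdmod/C_{tp}$ from that cellularization. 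Both of these steps require first constructing a suitable monoidal (flat $tp$-)model structure on differential graded quasi-coherent sheaves (Proposition \ref{prop:monmod}), which is not the injective model structure you mention, and then an argument \`a la Lemma \ref{lem:CSTT} that cellular equivalences of torsion objects are homology isomorphisms. You gesture at all of this (you mention the fork shape and a ``suitably cellularized'' model structure), but the actual work — the Hasse square for sheaves over $C$, the $tp$-flat model structure, the cellular skeleton argument — is where the content of the theorem lives, and it cannot be reduced to a citation of the standard sheaves/modules dictionary. That is the genuine gap in your stage (2).
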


\begin{proof} We will prove the theorem by showing that both
  categories are formal and have the same abelian skeleton, $\cA (PC)$. More
  precisely, we   construct the following zig-zag of Quillen equivalences 
\begin{multline*}
EC_{\Top} \leftmodin \TSP_\bQ
\stackrel{(1)}\simeq 
EC_a \leftmodin d \cA (\T)
\stackrel{(2)}\simeq 
d\cA (PC)\\
\stackrel{(3)}\simeq 
d\cA (P\cOC) 
 \stackrel{(4)}\simeq 
\mbox{cell} \endash P\cOCfork \leftdmod /C_{tp}
\stackrel{(5)}\simeq 
P\cOC \leftdmod/C_{tp}
\end{multline*}

\begin{description}

\item[Equivalence (1)] is an application of the monoidal equivalence of Corollary  
\ref{cor:modulesMQE}. 

\item[Equivalence (2)] comes from the elementary reformulation of the abelian 
category stated in Lemma \ref{lem:ECamodcAPC}. 

\item[Equivalence (3)] arises by replacing skyscraper and constant sheaves by 
the associated modules as in Proposition \ref{lem:desheaf}

\item[Equivalence (4)] is the Cellular Skeleton Theorem \ref{lem:CSTS} in this context. 

\item[Equivalence (5)] is an immediate application of the Cellularization 
Principle (Lemma \ref{lem:PBfork}). 
\end{description}

\end{proof}

\begin{remark}
There are similar results for Lurie's elliptic 
cohomology associated to a derived elliptic curve
\cite{LurieElliptic1}. Lurie constructs the representing $\Eoneinfty$-ring spectra as sections of a structure 
sheaf constructed from a  derived elliptic curve. At the level of 
derived algebraic geometry the counterpart of our theorem is built 
into his machinery, and over $\Q$ classical consequences can be 
inferred by theorems such as \cite[Theorem 2.1.1]{LurieElliptic1}.

Unpublished work of Gepner and the second author
analyses the relationship between Lurie's theories and the theories $EC_{\Top}$.  
Lurie's theories are defined and natural for all compact
Lie groups,  in particular the coordinate data is determined by the
coordinate at the identity pullback along the power maps. Accordingly,
not all of the theories $EC_a$ are restrictions of such theories. 
\end{remark}

\section{Elliptic curves}
Let $C$ be an elliptic curve over  a $\Q$-algebra $K$. There are certain
associated structures that we recall. For more details on the theory of elliptic curves we refer the reader to \cite{silverman}.

\subsection{Functions, divisors and differentials}
We write $e$ for the identity of $C$ as an abelian group,  
 $C[n]=\ker (C\stackrel{n}\lra C)$ for the points of
order dividing $n$ and $\Cn$ for the points of exact order $n$. 
Coming to divisors, Abel's theorem states that $\Sigma_i n_i[x_i]$ is the divisor
of a regular function if and only if $\Sigma_i n_i=0$ and $\Sigma_i
n_i x_i=e$. 

We write $\cOC$ for the structure sheaf of the curve $C$, and $\cK$
for the (constant) sheaf of meromorphic functions with poles only at
points of finite order. We then write  $\cO_{C,x}$ for the ring of 
functions regular at $x$, and  $(\cOC)_x^{\wedge}$ for its completion. 
For a divisor $D$ we write 
$\cOC(D)=\{ f\in \cK \st \mathrm{div} (f)+D\geq 0\}$ for the sheaf of functions with
poles bounded by $D$.  

 By the Riemann-Roch theorem $H^0(C; \cOC(ne))$ is $n$-dimensional over $K$ if
$n>0$. It follows that there is a short exact sequence
$$0\lra C \lra \Pic (C)\lra \Z\lra 0$$
where $\Pic(C)$ is the Picard group of line bundles on $C$.
Thus any line bundle is determined up to equivalence by a point on $C$
and its degree. 
We are especially concerned with the line bundles associated to sums
of divisors $C[n]$ where we have in particular $\cOC (C[n])\cong \cOC
  (n^2e)$.

Since $C$ is an abelian group, the sheaf 
  $\OmegaC$ of differentials is a trivializable line bundle. 
We will make our theory 2-periodic by using $\OmegaC$ for the
periodicity. 

\begin{definition}
The periodic structure sheaf is the sheaf of graded rings $\PcOC=\bigoplus_{n \in \Z}(\OmegaC)^{\tensor n}$,
where $(\OmegaC)^{\tensor n}$ is the degree $2n$ part. A {\em periodic} $\cOC$-module $M$ is a graded module over $\PcOC$. 
\end{definition}

\subsection{Derived categories of sheaves of modules}
The structure sheaf $\cOC$ is a sheaf of rings over $C$ and we  consider complexes of 
 sheaves of quasi-coherent $\cOC$-modules. 

The non-empty open sets of the usual Zariski topology on $C$ consist
of the complements  of arbitrary finite sets of points. We use the
torsion point (tp) topology where the non-empty open sets are obtained  by
deleting a finite sets of points, each of which has finite order. 
In effect, we only permit poles at points of finite order.

Consider four classes of maps $f:\cP\lra \cQ$ of sheaves over
$C_{tp}$. 
\begin{itemize}
\item $\cF_1=\{ f\st f: \cP\stackrel{\cong}\lra \cQ\}$
\item $\cF_2=\{ f\st f(D): \cP(D)\stackrel{\cong}\lra \cQ(D) \mbox{ for all 
   effective torsion point divisors $D$} \}$
\item $\cF_3=\{ f\st f_x: \cP_x\stackrel{\cong}\lra \cQ_x \mbox{ for all 
points $x$ of finite order} \}$
\item $\gl=\{ f\st \Gamma (f): \Gamma(\cP)\stackrel{\cong}\lra \Gamma (\cQ) 
  \}$
\end{itemize}

The first three classes are in fact the same. 

\begin{lemma}\label{lem:monmod}
We have equalities $\cF_1=\cF_2=\cF_3$.  We use $\cF_{tp}$ to denote it 
and we call elements of this class \emph{tp-isomorphisms}. We call a map $f$ between complexes of sheaves a \emph{tp-equivalence} if $H_*(f)$ is a tp-isomorphism.
\end{lemma}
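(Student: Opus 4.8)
The plan is to prove the chain of equalities $\cF_1 = \cF_2 = \cF_3$ by showing both inclusions in each comparison, using the local nature of sheaves on $C_{tp}$ and the definition of the twisted sheaves $\cP(D)$. The key observation is that all four candidate notions are really about the stalks at torsion points, so the content is purely local commutative algebra at the local rings $\cO_{C,x}$.

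First I would treat $\cF_1 = \cF_3$. The inclusion $\cF_1 \subseteq \cF_3$ is immediate: an isomorphism of sheaves induces isomorphisms on all stalks. For the reverse inclusion, suppose $f_x$ is an isomorphism for all torsion points $x$. Since the tp-topology only has open sets whose complements are finite sets of torsion points, the stalks of a sheaf on $C_{tp}$ at the non-torsion points are all equal to the ``generic'' sections — more precisely, every non-torsion point is in every non-empty open set, so there is really only one relevant extra stalk, namely $\Gamma(U)$ for $U$ running over the cofiltered system of non-empty opens, and this is a colimit of the torsion-point data. Concretely, since a sheaf on $C_{tp}$ is determined by its values on the basis of opens $C \setminus D$ for $D$ an effective torsion-point divisor, and such values are recovered from the torsion stalks together with gluing, $f$ being a stalkwise isomorphism at all torsion points forces $f$ to be an isomorphism on each basic open, hence an isomorphism of sheaves. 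I would write this out as: $f$ is injective and surjective as a map of sheaves iff it is so on all stalks of the tp-site, and the only stalks are the $\cP_x$ for $x$ of finite order (the "stalk at the generic point" being itself assembled from these).

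Next, $\cF_2$: I claim $\cF_1 \subseteq \cF_2$ trivially (an isomorphism $f$ gives isomorphisms $f(D) = f \otimes \cOC(D)$ after twisting, since twisting by the invertible sheaf $\cOC(D)$ is exact and functorial). For $\cF_2 \subseteq \cF_3$, note that the stalk $\cP_x$ can be recovered from the twisted global-or-local sections: for a torsion point $x$ and $D = n[x]$ with $n$ large, $\cP(D)$ has enough sections near $x$ to see $\cP_x$ as $x$ ranges and $n \to \infty$; more cleanly, $\cP_x = \colim_D \Gamma(\cP(D))$-localized-at-$x$, or simply observe $\cP(D)_x = \cP_x$ for any $D$ supported away from $x$, and choosing $D$ supported away from $x$ shows $f(D)$ an isomorphism forces $f_x$ an isomorphism. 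Finally $\cF_3 \subseteq \cF_1$ is the reverse inclusion proved in the previous paragraph, closing the cycle $\cF_1 \subseteq \cF_2 \subseteq \cF_3 \subseteq \cF_1$. The statement about tp-equivalences between complexes is then a formal consequence: $H_*$ commutes with taking stalks (stalks are filtered colimits, which are exact) and with twisting by the flat sheaf $\cOC(D)$, so $H_*(f)$ being a tp-isomorphism in any one of the three equivalent senses is equivalent to it being one in all of them.

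The main obstacle, and the only place requiring genuine care, is the inclusion $\cF_3 \subseteq \cF_1$: one must be sure that a map of sheaves on $C_{tp}$ which is an isomorphism at every torsion-point stalk is globally an isomorphism, i.e.\ that there is no "hidden" behaviour at the non-torsion points of $C$. This hinges on the precise definition of the tp-site — that non-empty opens are exactly complements of finite torsion-point sets — so that the non-torsion points do not contribute independent stalks but only the filtered colimit over all non-empty opens, which is itself determined by the torsion data. I would make this rigorous by recalling that for this Grothendieck topology the category of sheaves is equivalent to a category of compatible systems indexed by effective torsion-point divisors (this is presumably set up in the cited references \cite{gre99, BGKS} or in the surrounding text), and then a stalkwise isomorphism at torsion points is exactly an isomorphism of such systems.
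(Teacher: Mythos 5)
Your argument is correct and follows essentially the same route as the paper: the cycle of inclusions is closed by (i) exactness of tensoring with the invertible sheaf $\cOC(D)$ (giving $\cF_1\subseteq\cF_2$), (ii) the identification of the stalk $\cP_x$ as a filtered colimit of sections of twists $\cP(D)$ over torsion divisors $D$ avoiding $x$ (giving $\cF_2\subseteq\cF_3$), and (iii) a local-to-global argument (giving $\cF_3\subseteq\cF_1$). The paper's one-line proof gives exactly these three justifications, though its $\supseteq$ symbols appear to have the orientation reversed relative to the arguments it cites; your write-up has the directions consistent.

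Your explicit flagging of $\cF_3\subseteq\cF_1$ as the one genuinely non-trivial step is valuable --- the paper dismisses it as ``the local to global property of sheaves'' --- but the heuristic you offer (that the generic stalk is ``itself assembled from'' the torsion-point stalks) is not quite the right mechanism and would be hard to make precise: the generic stalk is the colimit of $\cP(C\setminus D)$ over \emph{all} torsion divisors $D$, whereas $\cP_x$ is the colimit over only those $D$ avoiding $x$, and these are different colimits, neither determining the other in an obvious way. The cleaner observation is topological: in $C_{tp}$ the closed points are exactly the torsion points (closed sets are $\emptyset$, $C$, and finite torsion sets), so given any section $s$ of $\ker f$ over $U=C\setminus S$, for each torsion $x\in U$ one can shrink to $U_x\ni x$ with $s|_{U_x}=0$, and the $U_x$ form a cover of $U$ because $\bigcap_x (C\setminus U_x)$ is a finite torsion set containing $S$ and avoiding every torsion $x\notin S$, hence equals $S$. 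Injectivity follows by the sheaf axiom, and surjectivity similarly after gluing local lifts (using the injectivity just proved). The generic stalk never needs to be inspected because it never governs a cover. This is likely the intended content of ``local to global,'' and slotting it in would firm up the one soft spot in your proof.
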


\begin{proof}
It is clear that $\cF_1\supseteq \cF_2$ since we can take the tensor
product with the locally free sheaf $\cOC(D)$. It is clear that
$\cF_2\supseteq \cF_3$ since $\cF_x$ is the direct limit of sections
of $\cF (D)$ over $D$ not containing $x$.

The proof that $\cF_3\supseteq \cF_1$ is the local to global property
of sheaves.   
\end{proof} 

The derived category $D(C_{tp})$  is formed from complexes of sheaves 
by inverting all homology tp-isomorphisms.  The method of Lemma \ref{lem:vanishing} below shows that not 
every global equivalence is a tp-equivalence of sheaves, but since
 $\cF \subseteq \gl$ we obtain a localization
$$D(C_{tp})\lra D(C_{tp})[\gl^{-1}]=: D_{\gl}(C). $$

\subsection{Model categories of sheaves of modules}

The natural way for a homotopy theorist to construct these derived categories
is as homotopy categories of model categories. As seen in
\cite{hov01sheaves},  there are numerous different model structures with the class
 of homology isomorphisms as weak equivalences. 

One method of constructing model structures is by specifying a
generating set  $\bbK$  of small objects. Writing
$S^n(M)$ for  the complex with $M$ in degree $n$ and zero elsewhere
and $D^{n+1}(M)$ for the mapping cone of the identity map of
$S^n(M)$ as usual we take generating cofibrations and acyclic cofibrations  to be
$$I(M)=\{ S^{n-1}(M)\lra D^n(M) \st n\in \Z, M\in \bbK\}$$
$$J(M)=\{ 0\lra D^n(M) \st n\in \Z, M\in \bbK\}. $$
We then attempt to use these to generate a model structure. 

We are particularly interested in monoidal model structures so we want
to take $\bbK$ to consist of 
flat objects. We define
$$\bbK_{tp}=\{ \cOC (D)\st D \mbox{ a divisor of torsion
    points}  \}.$$

\begin{proposition}
\label{prop:monmod}
The sets $I(\bbK_{tp})$ and $J(\bbK_{tp})$ generate a proper
monoidal model structure on the category of differential graded sheaves of quasi-coherent 
$\cOC$-modules, with tp-equivalences as weak equivalences. We call it the \emph{flat $tp$-model structure}.
\end{proposition}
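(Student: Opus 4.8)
The goal is to verify the hypotheses of a standard cofibrantly-generated model-structure recognition theorem for the sets $I(\bbK_{tp})$ and $J(\bbK_{tp})$, together with the monoidality (pushout-product) axiom and properness. I would invoke the version of Kan's recognition theorem adapted to categories of chain complexes of sheaves — essentially the strategy of Hovey \cite{hov01sheaves} — which reduces the construction to checking: (i) the underlying category of differential graded quasi-coherent $\cOC$-modules is bicomplete (this is classical); (ii) the generating sets permit the small object argument, i.e.\ the domains $S^{n-1}(\cOC(D))$ and $0$ are small relative to $I(\bbK_{tp})$-cell and $J(\bbK_{tp})$-cell; (iii) $J(\bbK_{tp})$-cell complexes are tp-equivalences; and (iv) a map has the right lifting property against $I(\bbK_{tp})$ if and only if it is an $I(\bbK_{tp})$-injective tp-equivalence.

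\textbf{Key steps.} First I would record that the category of quasi-coherent $\cOC$-modules on $C_{tp}$ is a Grothendieck abelian category, so its category of chain complexes is locally presentable; smallness of all domains is then automatic, disposing of (ii). Second, for (iii): each generating acyclic cofibration $0 \to D^n(\cOC(D))$ has contractible (indeed, acyclic) codomain, and since $\cOC(D)$ is flat (here using Lemma \ref{lem:monmod} to identify tp-isomorphisms with stalkwise isomorphisms, so flatness can be checked on stalks $\cO_{C,x}$, which are discrete valuation rings and hence the stalks of $\cOC(D)$ are free of rank one), pushouts and transfinite compositions of such maps remain tp-equivalences — this is the usual argument that a transfinite composite of cobase changes of maps with contractible flat codomain is a filtered-colimit-of-acyclics computation, which is exact in a Grothendieck category. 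Third, for (iv), I would show $J(\bbK_{tp})$-inj $=$ (epimorphisms that are stalkwise surjective with acyclic stalkwise kernel), i.e.\ the tp-trivial fibrations, by a stalkwise lifting argument using that $\cO_{C,x}$ is a PID so complexes of $\cO_{C,x}$-modules have a transparent homotopy theory; and $I(\bbK_{tp})$-inj is the stalkwise surjections, again checked on stalks. Combining these identifications gives $I(\bbK_{tp})$-inj $=$ $J(\bbK_{tp})$-inj $\cap$ (tp-equivalences), which is the remaining hypothesis of the recognition theorem. Finally, properness follows because all objects are fibrant-ish in the relevant sense and tp-equivalences are stalkwise quasi-isomorphisms (left properness: pushout along a cofibration is computed stalkwise and cofibrations are stalkwise degreewise split monomorphisms with flat cokernel, so they preserve quasi-isomorphisms; right properness: pullback along a stalkwise surjection is stalkwise exact), and the monoidal axiom reduces, via the explicit generating sets, to checking that $\cOC(D) \tensor_{\cOC} \cOC(D') \cong \cOC(D+D')$ is again in $\bbK_{tp}$ and that the pushout-product of two generating cofibrations is a cofibration, a direct computation with the $S^\bullet$/$D^\bullet$ notation.

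\textbf{Main obstacle.} The delicate point is \emph{not} any single axiom but the interplay between the \emph{tp-topology} and quasi-coherence: one must be sure that inverting tp-isomorphisms (rather than all stalkwise isomorphisms over the full Zariski site) is compatible with the generating set $\bbK_{tp}$, i.e.\ that $I(\bbK_{tp})$-injectivity really does capture exactly the tp-trivial fibrations and not something coarser. Concretely, the hard part is verifying that a map of complexes of quasi-coherent sheaves which has the right lifting property against all $S^{n-1}(\cOC(D)) \to D^n(\cOC(D))$ for torsion divisors $D$ is a tp-equivalence — this needs that the sheaves $\cOC(D)$ for torsion $D$ generate the category in the appropriate sense for the tp-topology, which in turn rests on Lemma \ref{lem:monmod} (the equality $\cF_1 = \cF_2 = \cF_3$) and the Riemann–Roch description of sections. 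Once that generation statement is in hand, the remaining verifications are the routine Grothendieck-abelian-category manipulations sketched above, and I would cite \cite{hov01sheaves} for the general machine rather than reproving it.
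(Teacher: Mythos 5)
Your overall strategy is the same as the paper's: invoke Hovey's machinery from \cite{hov01sheaves}, use the generation statement for $\bbK_{tp}$ (Lemma \ref{lem:genSetK}) together with the colimit-over-divisors computation of stalks, and reduce the pushout-product axiom to the flatness of $\cOC(D)$ and the closure of $\bbK_{tp}$ under tensor product. You also correctly identify the crux: that $\cOC(D)$ for torsion $D$ detects tp-equivalences, which rests on Lemma \ref{lem:monmod} ($\cF_1=\cF_2=\cF_3$).

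However, there is a genuine gap in your step (iv). First, a clerical point: you have the roles of $I$- and $J$-injectives reversed ($J(\bbK_{tp})$-inj should be the fibrations, $I(\bbK_{tp})$-inj the trivial fibrations), and with your labelling the claimed equality $I\textnormal{-inj}=J\textnormal{-inj}\cap(\textnormal{tp-equivalences})$ becomes false on its face. More seriously, even after fixing the swap, your proposed characterisation of the fibrations as exactly the maps that are stalkwise surjective at torsion points is not correct. A map has RLP against $J(\bbK_{tp})$ iff $\Gamma(\cP_n(D))\to\Gamma(\cQ_n(D))$ is surjective for every torsion divisor $D$; this is a condition on global sections, and since $C$ is projective the global-sections functor is not exact on quasi-coherent sheaves. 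The implication goes only one way: $J$-injectivity (plus the colimit argument of Lemma \ref{lem:genSetK}) yields stalkwise surjectivity, but a stalkwise surjection need not be $\Gamma(-(D))$-surjective. The paper avoids this trap entirely by invoking \cite[Theorem 1.7]{hov01sheaves}, which packages the construction so that only the single one-directional implication is required: if $\cP$ is $\bbK_{tp}$-flabby then $\Hom(M,\cP)$ is acyclic. That implication is exactly what the colimit/stalk argument proves. Your attempt to verify the raw Kan recognition conditions by identifying $I$-inj and $J$-inj with explicit stalkwise classes requires a two-sided characterisation that is unavailable here, and this is not a detail that can be patched without essentially reverting to Hovey's formulation.

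Your monoidality and properness sketches, and the smallness argument via the Grothendieck/locally presentable structure, are fine and match the paper in spirit.
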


We show that these sets generate a proper model structure follows from 
Quillen's argument as codified by  \cite[Theorem 1.7]{hov01sheaves}. 
The fact that it  is monoidal follows from flatness of objects in $\bbK_{tp}$. 
Before proceeding, we record the usual calculation of maps out of discs and spheres and categorical generators. 

\begin{lemma}
\label{lem:dnsn}
 For a complex $\cP$ of sheaves, 
$$\Hom (D^n(M), \cP)=\Hom (M, \cP_n)$$
and 
$$\Hom (S^n(M), \cP)=\Hom (M, Z_n(\cP)). \qed$$
\end{lemma}

\begin{lemma}\label{lem:genSetK}
The set $\bbK_{tp}$ is a set of generators for quasi-coherent $tp$-sheaves.
\end{lemma}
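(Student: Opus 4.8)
The statement to prove is that $\bbK_{tp}=\{\cOC(D)\st D\text{ a divisor of torsion points}\}$ is a set of generators for the Grothendieck category of quasi-coherent $tp$-sheaves, i.e. that every such sheaf is a quotient of a direct sum of copies of objects of $\bbK_{tp}$ (equivalently, that the functors $\Hom_{\cOC}(\cOC(D),-)$ are jointly faithful). Note first that $\bbK_{tp}$ is genuinely a set, since the divisors supported on the torsion points of $C$ form a set. The plan is to show that for an arbitrary quasi-coherent $tp$-sheaf $\cP$, the subsheaf $\cP'\subseteq\cP$ given by the sum of the images of all maps $\cOC(D)\to\cP$ with $\cOC(D)\in\bbK_{tp}$ is already all of $\cP$; this immediately exhibits $\cP$ as the target of a canonical epimorphism from $\bigoplus_{\cOC(D)\in\bbK_{tp}}\ \bigoplus_{\cOC(D)\to\cP}\cOC(D)$.

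To carry this out I would fix a non-empty $tp$-open $U=C\setminus T$, with $T$ a finite set of torsion points, and a section $s\in\cP(U)$. By quasi-coherence (compare the description of stalks in the proof of Lemma~\ref{lem:monmod}), sections over $U$ are sections of $\cP$ with poles of bounded order along $T$, so $\cP(U)=\colim_E\Gamma(C,\cP\tensor\cOC(E))$, the filtered colimit taken over effective divisors $E$ supported on $T$; hence $s$ is the image of some $t\in\Gamma(C,\cP\tensor\cOC(E))$ for such an $E$. Since $\cOC(E)$ is invertible with dual $\cOC(-E)$, one has $\Gamma(C,\cP\tensor\cOC(E))\cong\Gamma(C,\mathcal{H}om_{\cOC}(\cOC(-E),\cP))=\Hom_{\cOC}(\cOC(-E),\cP)$; let $\widetilde{s}\co\cOC(-E)\to\cP$ be the map corresponding to $t$. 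Because $E$ is supported on $T$, the restriction $\cOC(-E)|_U$ equals $\cOC|_U$, which contains the unit section; tracing the identifications, $\widetilde{s}(U)$ carries this unit to $s$, so $s$ lies in the image of $\widetilde{s}$, and $\cOC(-E)\in\bbK_{tp}$ since $-E$ is a divisor of torsion points. Thus $\cP'(U)=\cP(U)$ for every $tp$-open $U$, so $\cP'=\cP$, which is what was needed.

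I expect the only real point requiring care to be the quasi-coherence identification $\cP(C\setminus T)=\colim_E\Gamma(C,\cP\tensor\cOC(E))$ — the $tp$-analogue of the standard fact that on a Noetherian scheme the sections of a quasi-coherent sheaf over the complement of an effective divisor are obtained by inverting that divisor; in the present set-up this holds essentially by the definition of the $tp$-site, and it is precisely this that makes the torsion-point divisors suffice. As a cross-check, one can also reach the conclusion via Lemma~\ref{lem:monmod}: under the identifications there, invertibility of $\cOC(D)$ gives $\cP(D)\cong\Hom_{\cOC}(\cOC(-D),\cP)$, so the equality $\cF_2=\cF_{tp}$ says exactly that the family of functors $\Hom_{\cOC}(\cOC(D),-)$ is jointly conservative, which — together with the covering constructed above (or with cocompleteness and well-poweredness of the category) — is the assertion that $\bbK_{tp}$ generates. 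Finally I would remark that each $\cOC(D)$ is coherent, hence finitely presented and small, which is what makes $\bbK_{tp}$ usable as the generating set in Proposition~\ref{prop:monmod}.
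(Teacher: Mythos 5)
Your argument is correct and is essentially the paper's: both rest on the identification $\cP(C\setminus T)=\colim_k \Hom(\cOC(-kD),\cP)$ for $D$ an effective torsion-point divisor with support $T$, which in turn comes from the enriched isomorphism $\Hom(\cOC(-D),\cP)\cong\cP(D)$. The only presentational difference is that the paper computes stalks at torsion points as filtered colimits of such Hom groups, whereas you make explicit the equivalent formulation that every quasi-coherent $tp$-sheaf is a quotient of a direct sum of objects of $\bbK_{tp}$.
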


\begin{proof}
By definition the stalk of a sheaf $\cF$ at $x$ is the direct limit of
$\cF (U)$ over open sets $U$ containing $x$. The complement of such an open set $U$ is a closed set not containing $x$. Let $\cV_x$ denote the collection of closed sets not containing $x$ and $\mcD_x$ denote the collection of effective divisors not containing $x$. 

Since $\mcD_x$ is cofinal in $\cV_x$,  
$$\cF_x=\colim_{D\in \mcD_x}\cF(X\setminus D). $$
Finally
$$\cF(X\setminus D)=\colim_k \Gamma \cF (kD) \cong \colim_k\Hom (\cOC(-kD), \cF) . $$

The last step follows from 
the enriched isomorphism 
\[
\Hom (\cOC(-D), \cF)\cong \cF (D). \qedhere
\]
\end{proof}

\begin{remark}
The proof applies to an arbitrary quasi-projective variety with the
Zariski topology.
\end{remark}

\begin{proof}[Proof of Proposition \ref{prop:monmod}.]
As usual,  we define fibrations to be $\Jinj$ (i.e. the maps with the
 right lifting property with respect to all elements of $J$) and
cofibrations to be $\Icof$ (i.e. the maps with the left lifting
property with respect to all elements of $\Iinj$).

It remains to verify the conditions of \cite[Theorem
1.7]{hov01sheaves}. Firstly, $\bbK_{tp}$ is a generating
set by Lemma \ref{lem:genSetK} so we need to show that if $\cP$ is $\bbK_{tp}$-flabby then $\Hom (M, \cP)$
is acyclic for all $M\in \bbK_{tp}$. 

Suppose that $\cP$ is $\bbK_{tp}$-flabby. By Lemma \ref{lem:dnsn},
this means that the map 
$$\Gamma (\cP(D)_n)=\Hom(D^n(\cOC (-D)), \cP)\lra \Hom(S^{n-1}(\cOC
(-D)), \cP)
\lra \Gamma (Z_n\cP(D))$$
is surjective for all $D$. Taking colimits as in the lemma, we see
that
$$\cP_n \lra Z_n\cP$$
is an epimorphism on stalks, which is to say it is an epimorphism of
sheaves. Thus every cycle is a boundary and $\cP$ is
acyclic. Tensoring with the flat sheaf $\cOC(D)$ we see that
$\Hom (\cOC (-D), \cP)$ is acyclic as required. 

For the monoidal statement we need to check the pushout product
axiom. By \cite[Chapter 4]{hov99}, we need only check that $i\Box
j$ is an acyclic cofibration when  $i\in I$ and $j\in J$ and that
$i\Box i'$ is a cofibration for $i, i' \in I$. 

 If $i=(S^{m-1}(M)\lra
D^m(M))$ and $j=(0\lra D^n(N))$ this involves considering the map 
$$S^{m-1}(M)\tensor D^n(N)\lra D^m(M)\tensor D^n(N). $$
This is obtained from 
$$M\tensor N\lra D^1(M\tensor N)$$
by suspending and taking the mapping cones of the identity. In all our
examples $M\tensor N$ is in the class $\bbK_{\cC}$ so this is a
generating cofibration and the mapping cone of the identity will give
another cofibration with both terms acyclic. 

For $i=(S^{m-1}(M)\lra D^m(M)),  i'=(S^{m'-1}(M')\lra D^{m'}(M'))$ the
resulting map comes from the one for $
\overline{i}=(S^{m-1}(\Q)\lra D^m(\Q)),
\overline{i}'=(S^{m'-1}(\Q)\lra D^{m'}(\Q))$
in chain complexes over $\Q$ by tensoring with $M\tensor M'$. Since 
$\overline{i}\Box \overline{i}'$ is a cofibration in
$\Q$-modules  it follows that
$i\Box i'$ is a cofibration. 
\end{proof}

\begin{remark}
We will need the periodic variant, where we start from the sheaf
$\PcOC$  of {\em graded} rings.  We then consider
differential graded sheaves of quasi-coherent 
$\PcOC$-modules in the tp-topology, and form $D(\PcOC)$ as before. 
\end{remark}

\section{Hasse squares}

\subsection{The Hasse square for \texorpdfstring{$\torus$}{T}-spectra}

It will be useful for the reader to recall the model for rational $\bT$--spectra from Section \ref{sec:algModTsp}, because the models for quasi-coherent sheaves over an
elliptic curve are constructed analogously.

In particular, recall the Quillen equivalence between 
a cellularisation of 
$\Raforkmod$ and the algebraic model $d\cA (\T)$. 
We are replacing the cellularization by restricting to the torsion
subcategory, which we think of as the essential skeleton. 
Since we are preparing for the analogue, we recall the
  outline, which is an elaboration of \cite[Section 5]{gscell}. 
The essential fact is that we have an adjunction where
  the left adjoint is inclusion $i$  (clearly strong symmetric 
monoidal) and the right adjoint $\Gamma$ is the 
torsion functor. 

This is a Quillen pair, and we apply the
Cellularization Principle \cite[Theorem 2.1]{gscell} to get a Quillen equivalence. 
The main content of the proof is that a cellular equivalence between
torsion objects is a homology isomorphism. We recall the proof 
from \cite{tnqcore} partly because it is much simpler for the circle
and partly because we will need to adapt it to sheaves in Lemma
\ref{lem:CSTS} below.

\begin{lemma}
\label{lem:CSTT}
(Cellular Skeleton Theorem \cite[11.5]{tnqcore})
There is a monoidal Quillen equivalence 
\[
\jcell \Raforkmod \simeq d\cA (\T). 
\]
\end{lemma}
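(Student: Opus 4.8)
The plan is to invoke the Cellularization Principle of \cite[Theorem 2.1]{gscell} applied to the adjunction $(i, \Gamma)$ where $i \colon d\cA(\T) \hookrightarrow \Raforkmod$ is the inclusion of the torsion subcategory (the "essential skeleton") and $\Gamma$ is the torsion functor. First I would verify that $(i,\Gamma)$ is a Quillen pair: $i$ is the inclusion of a full subcategory closed under the relevant (co)limits, is strong symmetric monoidal, and preserves cofibrations and trivial cofibrations when $\Raforkmod$ carries the diagram injective model structure and $d\cA(\T)$ carries the dualizable model structure of \cite{barnesmonoidalmodelso2}; dually $\Gamma$ preserves fibrations and trivial fibrations. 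Since $i$ is strong monoidal, the Cellularization Principle will moreover upgrade to a \emph{monoidal} Quillen equivalence, which is exactly the form needed later.

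Next I would identify the cells. On the $\Raforkmod$ side the cellularization is at the set $K_a$ of derived images of the standard generators $\T/H_+$, described in \cite[Lemma 4.2.2]{BGKS}; on the $d\cA(\T)$ side the corresponding cells are their images under $\Gamma \circ (\text{cofibrant replacement})$, and one checks these generate $d\cA(\T)$ (equivalently, that $d\cA(\T)$ is already "cellular" with respect to them, so that no cellularization of the target is needed). The Cellularization Principle then reduces the claim to showing that the derived unit and counit are cellular equivalences — and since everything in sight is built from the cells $K_a$, it suffices to check this on the cells themselves, where it is a direct computation: a cell of $\Raforkmod$ is already torsion, so $i\Gamma$ applied to it (derived) returns the same object up to equivalence.

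The main content, and the step I expect to be the genuine obstacle, is the assertion that a $K_a$-cellular equivalence between torsion objects is an actual homology isomorphism (objectwise quasi-isomorphism of diagrams). This is the circle-group analogue of \cite[Section 5]{gscell} and the proof recalled from \cite{tnqcore}: one must show that for a torsion $\Rafork$-module, the homotopy groups are detected by maps out of the cells, using that $\cOcF \cong \prod_n \Q[c]$ and that torsion modules over $\Q[c]$ are built from the residue field $\Q = \Q[c]/(c)$, so that $c$-power torsion is exactly what the cells see. I would then note, as the excerpt promises, that this argument is simpler for $\T$ than in general and will need to be adapted — replacing modules by sheaves and the family of finite subgroups by torsion points on $C$ — in the proof of Lemma \ref{lem:CSTS}. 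Finally, combining the Quillen equivalence $K_a \cell \Rafork \leftdmod \simeq d\cA(\T)$ from \cite[11.5]{tnqcore} with the monoidality just established yields the stated $\jcell \Raforkmod \simeq d\cA(\T)$.
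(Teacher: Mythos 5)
Your high-level strategy matches the paper exactly: apply the Cellularization Principle to the $(i,\Gamma)$ adjunction, note that $i$ is strong symmetric monoidal so the equivalence is monoidal, and reduce the claim to showing that a $K_a$-cellular equivalence between objects of $d\cA(\T)$ is an actual homology isomorphism. The gap is in your sketch of that last step, and it is not merely an omission of detail. An object $X=(N\lra \tF\tensor V)$ of $d\cA(\T)$ is \emph{not} a torsion $\Rafork$-module: the $V$-part is ``generic'' and $N$ need not be torsion over $\cOcF$. Consequently, the paper's argument has two genuinely distinct stages, and your sketch only covers the second. First one uses $[S^{-W},X]_*=0$ for representation spheres with $W^{\T}=0$ and takes the colimit over such $W$ to conclude that $e(V)=(\cEi N\stackrel{\cong}\lra \tF\tensor V)$ is acyclic, hence $V=0$; only then is $N=T$ a torsion $\cOcF$-module and $X=f(T)$, at which point the $\Q[c]$-torsion/residue-field argument you describe (now using the algebraic cells $\sigma_n$ modelling $\T\sm_{\T[n]}e_nS^{-1}$) gives $e_nT\simeq 0$ for each $n$, whence $T\simeq 0$. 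Relatedly, your remark that ``a cell of $\Raforkmod$ is already torsion'' is not quite right: the sphere cell $S^0\simeq(\cOcF\to\tF\leftarrow\Q)$ has nonzero $V$-part and is not torsion, though like all of $K_a$ it does lie in the essential image of $i$, which is what the Cellularization Principle actually needs.
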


\begin{proof} 
We show that if $X=(N\lra \tF\tensor V)$ in $d\cA (\T)$ is cellularly trivial
then $X\simeq 0$.

First note 
$$0 \simeq \Hom (S^{-W}, X)\cong (\Sigma^WN\lra \tF \tensor V)$$
so that 
$$0 \simeq \colim_{W^{\T}=0} \Hom(S^{-W}, X)=(\cEi N\stackrel{\cong}\lra \tF\tensor
V)=e(V)$$
is trivial. Hence $V=0$,  $N=T$ is a torsion module and $X=f(T)$.
Since $T$ is torsion, $T\cong \bigoplus_n 
e_nT$. Now consider the algebraic counterpart of the basic cell
$\Sigma^{-1}\sigma_n=\T\sm_{\T[n]}e_nS^{-1}$, namely $(\Q_n \lra \tF\tensor 0)$, where $\T[n]$ are points of order $n$ and $\Q_n$ denotes $\Q$ at degree $0$ at the spot corresponding to $C_n$ in the $\cOcF$-module. 
Then we have  
$$0\simeq \Hom (\sigma_n, f(T))=\Hom_{\Q[c]} (\Q, e_nT)$$
Hence $e_nT\simeq 0$. Since this applies to all $n\geq 1$ we see
$T\simeq 0$ as required.  
\end{proof}

For comparison it is illuminating to identify the non-trivial objects $X=(\beta: N \lra 
\tF\tensor V)$ of $d\cA(\bT)$ so that $\piT_*(X)=0$. 
\begin{lemma}
\label{lem:vanishing}
If $T$ is an $\cE$-divisible $\cE$-torsion module with action map
$\mu: \cOcF\tensor T \lra T$ then the object 
$$A(T)=(NT\lra \tF\tensor T) \mbox{ with } NT=\ker(\tF\tensor T\stackrel{\mu}\lra T)$$
has $\pi^{\T}_*(A(T))=0$. If $X$ has $\pi^{\T}_*(X)=0$ then $X\simeq
A(T)$ where $T=\pi^{\T}_*(X\sm \Sigma E\cF_+)$. 
\end{lemma}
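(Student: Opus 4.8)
The plan is to extract both statements from the long exact sequence that computes $\piT_*$ of an object $X=(\beta\co N\to\tF\tensor V)$ of $d\cA(\T)$ as the homology of the homotopy pullback of the cospan $N\xrightarrow{\beta}\tF\tensor V\leftarrow V$ (the map $V\to\tF\tensor V$ being the canonical one), namely
\[
\cdots\longrightarrow\piT_n(X)\longrightarrow H_n(N)\oplus H_n(V)\longrightarrow H_n(\tF\tensor V)\longrightarrow\piT_{n-1}(X)\longrightarrow\cdots,
\]
which is part of the analysis identifying $d\cA(\T)$ with rational $\T$--spectra (see \cite{gre99, BGKS} and Section~\ref{sec:algModTsp}), together with the algebraic form of the isotropy separation cofibre sequence $\efp\to S^0\to\widetilde{E}\cF$ and the identifications of $X\sm\widetilde{E}\cF$ and $X\sm\efp$ it yields inside $d\cA(\T)$.

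For the first statement, I would first observe that $A(T)$ does lie in $d\cA(\T)$: since $T$ is $\cE$--divisible the action extends to a map $\bar\mu\co\tF\tensor T\to T$ which is split by $t\mapsto 1\tensor t$, hence surjective with kernel $NT$, and since $T$ is $\cE$--torsion the short exact sequence $0\to NT\to\tF\tensor T\xrightarrow{\bar\mu}T\to 0$ yields $\cEi(NT)\cong\tF\tensor T$. The same splitting $\tF\tensor T=NT\oplus(1\tensor T)$ then shows that for $X=A(T)$ --- whose structure map is the inclusion $NT\hookrightarrow\tF\tensor T$ --- the middle map $H_*(NT)\oplus H_*(T)\to H_*(\tF\tensor T)$ of the long exact sequence above is an isomorphism; hence $\piT_*(A(T))=0$.

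For the second statement, given $X$ with $\piT_*(X)=0$, I would smash the cofibre sequence $\efp\to S^0\to\widetilde{E}\cF$ with $X$; vanishing of $\piT_*(X)$ makes the resulting connecting map an isomorphism $\piT_*(X\sm\widetilde{E}\cF)\xrightarrow{\ \cong\ }\piT_{*-1}(X\sm\efp)=\piT_*(X\sm\Sigma\efp)=:T$. On one side $X\sm\efp$ is a module over $\efp$, so $T$ is $\cE$--torsion; concretely, on homology $T$ is the cokernel of the localisation map $H_*(N)\to\cEi H_*(N)$ as an $\cOcF$--module, which is visibly $\cE$--torsion and $\cE$--divisible, and this fixes the action $\mu\co\cOcF\tensor T\to T$. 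On the other side, vanishing of $\piT_*(X)$ makes the middle map of the long exact sequence an isomorphism $H_*(N)\oplus H_*(V)\xrightarrow{\cong}H_*(\tF\tensor V)$, under which $H_*(N)$ becomes the kernel of a surjection $\tF\tensor H_*(V)\to H_*(V)$ restricting to the identity on the canonical copy $1\tensor H_*(V)$. Using $\piT_*(X\sm\widetilde{E}\cF)\cong H_*(V)$ to match $H_*(V)$ with $T$, this surjection is identified with $\bar\mu$, so $X\simeq A(T)$ on homology; if $X$ carries a nonzero differential the same splitting is used to produce a quasi-isomorphism $X\simeq A(T)$ in $d\cA(\T)$, so in effect the statement asserts such $X$ are formal.

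The step I expect to be the main obstacle is the bookkeeping needed to carry this out precisely: describing $X\sm\widetilde{E}\cF$ and $X\sm\efp$ explicitly inside $d\cA(\T)$, and above all checking that the $\cOcF$--module structure on $T$ read off from the $\efp$--side (where $\cE$--torsion is manifest) is the one that makes the comparison $X\to A(T)$ an actual morphism of $\pscr$--diagrams --- not merely an isomorphism of underlying objects --- and a $\piT_*$--isomorphism (equivalently, by the structure of $d\cA(\T)$, a vertexwise homology isomorphism). A subsidiary point, already needed in the first part, is to justify that for an $\cE$--divisible $\cE$--torsion $T$ the extension $\bar\mu$ of the action to $\tF\tensor T$ exists (this rests on $\cE$--divisible modules being injective among $\cE$--torsion modules) and hence that $A(T)\in d\cA(\T)$.
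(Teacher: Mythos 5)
Your proposal travels the same circle of ideas as the paper's proof of the second statement, but the technical packaging is noticeably different, and the difference matters for the step you (rightly) flag as the main obstacle. You work at the level of homology, using the Mayer--Vietoris sequence for the homotopy pullback $N\to\tF\tensor V\leftarrow V$ and the isotropy-separation cofibre sequence to identify $T=\piT_*(X\sm\Sigma\efp)$. The paper instead reduces immediately to the chain level: it splits $X$ into even and odd parts, and shows that if $K=\ker\beta\neq 0$ the cofibre sequence $X\to e(V)\to f(T)\oplus\Sigma f(K)$ would force $\piT_*(\Sigma f(K))$ to be even, which a nonzero torsion $K$ cannot produce; so $\beta$ is injective at the chain level. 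This gives an honest short exact sequence (indeed an injective resolution) $0\to X\to e(V)\to f(T)\to 0$ with $T=(\tF\tensor V)/N$, and then $\piT_*(X)=0$ if and only if the connecting map $V=\piT_*(e(V))\to\piT_*(f(T))=T$ is an isomorphism. The payoff of the chain-level reduction is exactly the bookkeeping you were worried about: once $V\cong T$, the surjection $\tF\tensor V\to T$ is $\cOcF$-linear by construction (it is a quotient map), and under the identification $\tF\tensor V\cong\tF\tensor T$ it literally becomes $\bar\mu$, so $N=\ker(\tF\tensor V\to T)\cong NT$ on the nose and $X\simeq A(T)$ needs no separate formality argument. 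In your version the $\cOcF$-action on $T$ is imported from the $\efp$-side and you still need to check that the surjection $q\colon\tF\tensor H_*(V)\to H_*(V)$ produced by inverting the Mayer--Vietoris map is $\cOcF$-linear and equal to $\bar\mu$, and then lift the resulting homology isomorphism to a zigzag of quasi-isomorphisms; neither is automatic without something playing the role of the paper's injectivity reduction. Your first part, via the splitting $\tF\tensor T=NT\oplus(1\tensor T)$ coming from $\cE$-divisibility, is correct and more explicit than the paper's treatment, which leaves the forward direction implicit in the ``if and only if.''
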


\begin{proof} Suppose that $X=(N\stackrel{\beta}\lra \tF\tensor V)$ has $\pi^{\T}_*(X)=0$.
First, we note that any $X$ splits as an even part $X_{ev}$ and an odd 
part $X_{od}$. Since $X$ is $\piT_*$-acyclic so are its even and odd 
parts. We may therefore assume without loss of generality that $X$ is 
even. Next, we see that $\beta $ must be injective. Otherwise, if 
$K=\ker (\beta)$ we have a cofibre sequence 
$$X\lra e(V)\lra f(T)\oplus \Sigma f(K), $$
where $T=(\tF\tensor V)/N$.

Since $\piT_*(e(V))=V$ is in even degrees, we must have 
$\piT_*(\Sigma f(K))$ in even degrees, but $K$ is a torsion module, 
so that if it is not zero it will have some odd degree homotopy.

Since $X$ has injective structure map $\beta$,  we have 
an injective  resolution 
$$0\lra X\lra e(V)\lra f(T)\lra 0, $$
which is $\cE$-torsion and divisible. Now $\piT_*(X)=0$ if and only if $V=\piT_*(e(V))\lra 
\piT_*(f(T))=T$ is an isomorphsism, so that $X\simeq A(T)$ as
required. 
\end{proof}

\subsection{The Hasse square for sheaves over \texorpdfstring{$C$}{C}}

Returning to the elliptic curve $C$, just as in Section \ref{sec:algModTsp} there is a pullback square
$$\xymatrix{\cOC \ar[r]\ar[d]  &\cK\ar[d]\\
\cOCFh
\ar[r] &\cK \tensor_{\cOC}\cOCFh
}$$
of sheaves of rings over $C$. Recall that $\cK$ denotes the constant sheaf of meromorphic functions with poles only at points of finite order. We note that there is a product splitting 
$$\cOCFh \cong  \prod_n\cOCnhat. $$

We write $\cOCforkmod$ for the category of differential modules over the diagram 
$$\cOCfork=\left(
\vcenter{\xymatrix{
&\cK\ar[d]\\
\cOCFh\ar[r] &\cK \tensor_{\cOC}\cOCFh 
}}\right)$$
of sheaves of rings.

Once again, we arrange this on a line for typographical reasons 
$$\cOCFh 
\stackrel{l}\lra \cEi \cOCFh \stackrel{\iota}\lla \cK. $$
We write  $\cOCforkmod$ for the category of all diagrams 
$$\cN \stackrel{k}\lra \cP \stackrel{m}\lla \cV$$ 
of differential modules over $\cOCfork$ where $k$ is a map of differential $\cOCFh$-modules and $m$ is a map of differential $\cK$-modules. The category  $\cA (\cOC)$ consists of diagrams 
where the maps induce isomorphisms $l_*N\cong P$ and $\iota_*V\cong 
P$. 

 Just as happened for modules over the sphere
spectrum, the model category of  differential graded $\cOC \leftmod$ with
$tp$-equivalences from Lemma \ref{lem:monmod} can
be recovered from differential modules over $\cOCfork$.

\begin{lemma}
\label{lem:PBfork}
There is a Quillen equivalence  
\[
\cOC \leftdmod \simeq 
\textnormal{cell} \endash \cOCfork \leftdmod,
\]
where the model structure on the left is the one from Proposition \ref{prop:monmod} and the one on the right is the cellularization (at the derived images of the generators $\cO (D)$ of differential graded $\cOC$-modules) of  the diagram injective model structure built from the $tp$-flat model structures on the categories of sheaves.
\end{lemma}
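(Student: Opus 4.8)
The plan is to deduce this from the Cellularization Principle \cite[Theorem 2.1]{gscell}, following the pullback construction of \cite[Proposition 4.1]{gsmodules} exactly as in the recovery of $\TSP_\bQ$ from $\bbSfork_{\Top}\leftmod$ in Section~\ref{sec:algModTsp}. The starting point is the Hasse square of sheaves of rings over $C_{tp}$ displayed just before the Lemma, whose fork is $\cOCfork$. Since $\cOC$ maps to each vertex of this fork through the ring homomorphisms $\cOC\to\cOCFh$, $\cOC\to\cEi\cOCFh$, $\cOC\to\cK$, extension of scalars at each vertex assembles into a functor
\[
F\co \cOC\leftdmod \lra \cOCfork\leftdmod, \qquad M \longmapsto \left(\cOCFh\tensor_{\cOC}M \to \cEi\cOCFh\tensor_{\cOC}M \lla \cK\tensor_{\cOC}M\right),
\]
which is strong symmetric monoidal, with right adjoint $\Gamma$ the pullback of the diagram. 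Because cofibrations and weak equivalences in the diagram-injective structure are created objectwise, and $F$ carries each flat generator $\cOC(D)$ to a twist of the monoidal unit of $\cOCfork\leftdmod$ (using that $\cOC(D)$ becomes free of rank one over each of $\cOCFh$, $\cEi\cOCFh$ and $\cK$), the pair $(F,\Gamma)$ is a Quillen pair; and the vertices being cellular and proper (Proposition~\ref{prop:monmod}) the cellularisations of $\cOCfork\leftdmod$ exist.

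Next I would apply the Cellularization Principle to the set $Q=\bbK_{tp}=\{\cOC(D)\}$ of generators. By Lemma~\ref{lem:genSetK} this set generates $\cOC\leftdmod$, so the $Q$-cellularisation of $\cOC\leftdmod$ coincides with $\cOC\leftdmod$ itself; and by construction the cellularisation of $\cOCfork\leftdmod$ at the derived images $\mathbf{L}F(\cOC(D))$ is exactly the model category named on the right of the Lemma. The Cellularization Principle then yields the asserted Quillen equivalence provided its single remaining hypothesis holds: that for every effective torsion divisor $D$ the derived unit
\[
\cOC(D) \lra \mathbf{R}\Gamma\,\mathbf{L}F(\cOC(D))
\]
is a $tp$-equivalence. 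Since $\mathbf{R}\Gamma\,\mathbf{L}F(\cOC(D))$ is the homotopy pullback of $\cOCFh\tensor\cOC(D)\to\cEi\cOCFh\tensor\cOC(D)\lla\cK\tensor\cOC(D)$ and $\cOC(D)$ is a line bundle, tensoring with it is exact and the claim reduces to the case $D=0$: that the Hasse square
\[
\xymatrix{\cOC\ar[r]\ar[d] & \cK\ar[d] \\ \cOCFh\ar[r] & \cEi\cOCFh}
\]
is a homotopy pullback of complexes of sheaves in the $tp$-structure.

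This square is the geometric counterpart of the Tate square used on the topological side, and establishing that it is a \emph{homotopy} pullback, not merely the strict pullback of sheaves of rings it visibly is, is the substance of the proof. Since $tp$-equivalences are detected on stalks at finite-order points (Lemma~\ref{lem:monmod}), I would verify it stalkwise there. At a point $x$ of exact order $n$, the product splitting $\cOCFh\cong\prod_n\cOCnhat$ shows that only the factor supported at $x$ survives, and the square of stalks becomes the arithmetic fracture square relating the local ring $\cOC_x$, its completion, and the corresponding rings of meromorphic germs. One checks this has vanishing derived correction because the relevant maps are injective and completion induces an isomorphism modulo every power of the maximal ideal, so it is a homotopy pullback on $tp$-stalks, as required. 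It is crucially \emph{not} a pullback on stalks at points of infinite order, where the completion terms vanish while $\cOC_x\to\cK_x$ is not an isomorphism; this is precisely why the Lemma is formulated in the $tp$-topology. Finally, since $F$ is strong symmetric monoidal the resulting Quillen equivalence is monoidal, which is what is needed for Equivalence~(5) in the proof of Theorem~\ref{cor:main}.

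The main obstacle, then, is not the formal cellularisation machinery but this verification: one must identify the $tp$-stalks of the Hasse square at the torsion points with an honest fracture square of (complete) local rings and check that no $\lim^1$- or higher-derived term intervenes, so that the strict pullback of sheaves of rings is genuinely the homotopy pullback in the chosen model structure.
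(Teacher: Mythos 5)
Your proposal follows the same strategy as the paper's proof: apply the Cellularization Principle to the extension-of-scalars/pullback Quillen pair, with $\bbK_{tp}=\{\cOC(D)\}$ as the set of cells. The paper's proof is considerably terser—it leaves the derived-unit verification implicit in the remark that $\cOC$ is the pullback of $\cOCfork$ and that $\cOC(D)$ is locally free—whereas you correctly isolate and expand the substantive point, namely that the Hasse square becomes a homotopy pullback after tensoring with each $\cOC(D)$, and verify it at torsion-point stalks where $tp$-equivalences are detected by Lemma~\ref{lem:monmod}.
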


\begin{proof}
Extension of scalars from $\cOC$ to $\cOCfork$ is left adjoint to the
pullback functor. If we use  the $tp$-flat model structures on the categories
of sheaves and the diagram injective model structure, then this is a
Quillen pair. 

The generators $\cO (D)$ of differential $\cOC$-modules are small, and have small
images under the derived left adjoint. The fact that the $\cOC$ is the pullback of the diagram
$\cOCfork$ and the fact that $\cOC(D)$ is locally free shows that all generators are locally free. The result then follows from
the Cellularization Principle \cite{gscell}. 
\end{proof}

Similarly, we may obtain a graded abelian category if we replace
$\cOC$ by $\PcOC$, and then form $d\cA (\PcOC)$.

We give the  categories of differential graded modules over the
individual graded sheaves of rings in $\PcOCfork$ the
flat $tp$-model structures, which we have seen to be 
proper and monoidal.  We then give the category $\PcOCforkmod$ the 
diagram-injective model structure.

\begin{lemma} \label{lem:CSTS}
(Cellular Skeleton Theorem)
There is a monoidal Quillen equivalence 
\[
\jcell \PcOCforkmod \simeq d\cA (\PcOC)
\]
where the left hand side is cellularised at the derived images of the generators of the differential $\PcOC$-modules.
Both of these model categories are Quillen equivalent to differential $\PcOC$-modules with the $tp$-flat model structure and
so all have homotopy categories $D(PC)$. 
\end{lemma}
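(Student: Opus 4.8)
The plan is to follow closely the proof of the Cellular Skeleton Theorem for $\T$-spectra (Lemma \ref{lem:CSTT}), transporting each step from $\Raforkmod$ to $\PcOCforkmod$. First I would set up the adjunction: the left adjoint is the inclusion $i\co d\cA(\PcOC)\hookrightarrow \PcOCforkmod$ of the full subcategory of diagrams whose structure maps induce isomorphisms $l_*\cN\cong\cP$ and $\iota_*\cV\cong\cP$, and the right adjoint $\Gamma$ is the torsion functor (the analogue of the $\cE$-torsion functor in the algebraic model). As in the topological case, $i$ is clearly strong symmetric monoidal, so this is a strong symmetric monoidal Quillen pair between the diagram-injective model structure on $\PcOCforkmod$ and the dualizable model structure on $d\cA(\PcOC)$; one then invokes the Cellularization Principle \cite[Theorem 2.1]{gscell} to promote it to a monoidal Quillen equivalence after cellularising the left-hand side at the derived images of the generators of differential $\PcOC$-modules.

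The heart of the argument, exactly as in Lemma \ref{lem:CSTT}, is to show that a cellularly trivial torsion object $X=(\cN\lra \cEi\cOCFh\tensor\cV)$ in $d\cA(\PcOC)$ is contractible. Writing $S^{-W}$ for the algebraic counterparts of the representation-sphere cells, one computes $0\simeq\colim_{W}\Hom(S^{-W},X)$ and deduces (using $\cEi\cN\cong\cEi\cOCFh\tensor\cV$) that $\cV=0$, so $\cN$ is a torsion module and $X$ is of the form $f(\mcT)$ for a torsion $\PcOC$-module $\mcT$. Now $\cOCFh$ splits as $\prod_n\cOCnhat$ and hence $\mcT$ splits as $\bigoplus_n e_n\mcT$; testing against the basic cell $\sigma_n$ corresponding to the point(s) of exact order $n$ (the analogue of $\Sigma^{-1}\sigma_n=\T\sm_{\T[n]}e_nS^{-1}$) gives $0\simeq\Hom_{\cOCnhat}(\ast,e_n\mcT)$, forcing $e_n\mcT\simeq 0$ for every $n\geq1$, hence $\mcT\simeq0$ and $X\simeq0$. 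This is precisely the step where the proof must be adapted to sheaves, and I expect it to be the main obstacle: one needs the local structure of $\cOCnhat$ (a completed local ring) to play the role that $\Q[c]$ played, and one needs the generators $\cO(D)$ to detect the torsion summands the way the stable cells $\T/H_+$ did; the torsion-point topology is exactly what makes this work, since only points of finite order contribute cells.

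Finally I would record the consequences. Since both $\jcell\PcOCforkmod$ and $d\cA(\PcOC)$ are now Quillen equivalent to each other, and $\cOC\leftdmod$ (resp. its periodic analogue $\PcOC\leftdmod$) with the $tp$-flat model structure of Proposition \ref{prop:monmod} is Quillen equivalent to $\textnormal{cell}\endash\PcOCforkmod$ by the (periodic version of the) Hasse-square comparison Lemma \ref{lem:PBfork}, all three model categories have the same homotopy category, which we call $D(PC)$. The monoidality of the equivalence follows because every functor in sight — the inclusion $i$, the extension-of-scalars along $\PcOC\to\PcOCfork$, and the cellularisation — is strong symmetric monoidal or a monoidal Quillen equivalence, the flat $tp$-model structures being monoidal by Proposition \ref{prop:monmod}; so the induced equivalence of homotopy categories is one of tensor triangulated categories.
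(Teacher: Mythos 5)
Your overall strategy matches the paper's: same adjunction $(i,\Gamma)$, same use of the Cellularization Principle, same two-stage reduction (first kill $V$ via a colimit argument, then kill the torsion piece summand by summand). But you have left a genuine gap exactly at the step you yourself flag as ``the main obstacle,'' and the paper's proof supplies the missing sheaf-theoretic ingredient that your sketch does not.

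Two points. First, a notational slip that is worth correcting because it hides the relevant objects: the cells in the sheaf setting are not $S^{-W}$ but $\cO(-D)$ for torsion-point divisors $D$, and the colimit you want is $\colim_D \Hom(\cOC(-D),X)$, giving
\[
0\simeq \colim_D \Hom(\cOC(-D),X)=(\colim_D N(D)\stackrel{\cong}\lra \tF\tensor V)=e(V),
\]
forcing $V=0$ and hence $X=f(T)$ with $T$ a torsion sheaf. You wrote $\colim_W\Hom(S^{-W},X)$, importing the representation-sphere notation from Lemma \ref{lem:CSTT} without translating it into divisors; this works out morally but obscures the role of the tp-topology.

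Second, and more seriously, in your final step you write $0\simeq\Hom_{\cOCnhat}(\ast,e_n\mcT)$ with $\ast$ undefined. In Lemma \ref{lem:CSTT} the test object was $\Q$ (the residue field of $\Q[c]$), realized as the algebraic counterpart $(\Q_n\lra\tF\tensor 0)$ of the basic cell $\sigma_n$, and it sits in the cell class because $\T/C_{n+}$ does. In the sheaf setting the analogous test object has to be \emph{produced from the generators} $\cO(D)$ of the cellularisation, since only those are available. The paper does this via the short exact sequence
\[
0\lra \cOC\lra \cOC(C\langle n\rangle)\lra i^{C\langle n\rangle}_*k\lra 0,
\]
where $i^{C\langle n\rangle}\co C\langle n\rangle\hookrightarrow C$ is the inclusion of the points of exact order $n$. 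Both $\cOC$ and $\cOC(C\langle n\rangle)$ are in $\bbK_{tp}$, so $f(i^{C\langle n\rangle}_*k)$ (the image of the skyscraper sheaf) is a cell, and testing against it gives $0\simeq\Hom(f(i^{C\langle n\rangle}_*k),f(T))=\Hom_{C\langle n\rangle}(k,T|_{C\langle n\rangle})$, whence $T|_{C\langle n\rangle}\simeq 0$ for all $n$ and so $T\simeq 0$. Without identifying this cell (and verifying it is in the cell class), your argument does not close, since the residue-field test object is not \emph{a priori} one of the generators. This is precisely the sheaf-theoretic content that distinguishes the Cellular Skeleton Theorem for $PC$ from its $\T$-spectrum predecessor, and your proposal leaves it as an acknowledged but unfilled hole.
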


\begin{proof}
We begin with the adjunction 
$$\adjunction{i}{d\cA (\PcOC)}{\PcOCforkmod}{\Gamma}. $$
The left adjoint is inclusion $i$ and the right adjoint $\Gamma$ is the 
torsion functor constructed just like in the case of $\cA (\T)$, see \cite{BGKS}.  
The left adjoint  is clearly strong symmetric 
monoidal. 

Passing to differential graded objects, we obtain  a monoidal Quillen
pair. The objects $\cOC (D)$ with $D$ a torsion point divisor lie in 
$d\cA (\cOC)$, so the Cellularization  Principle \cite{gscell} shows that the
cellularizations are Quillen equivalent.

Finally, we observe that any  
$\cOC(D)$-cellular equivalence of torsion objects for all  torsion point divisors $D$ is a homology
isomorphism. The proof is the precise analogue of the one in $d\cA
(\T)$ (Lemma \ref{lem:CSTT}). We show that if $X=(N\lra \tF\tensor V)$ is $\cOC(D)$-trivial for
all torsion point divisors $D$ then $X\simeq 0$.

First note 
\[
0 \simeq \Hom (\cO (-D), X)\cong (N(D)\lra \tF \tensor V)
\]
so that 
\[
0 \simeq \colim_D \Hom(\cOC (-D), X)=(\colim_D N(D)\stackrel{\cong}\lra \tF\tensor
V)=e(V)
\]
is trivial. Hence $V=0$,  $N=T$ is torsion sheaf and $X=f(T)$. It follows that its
stalks at non-torsion points are trivial. Finally we use the short
exact sequence 
$$0\lra \cOC\lra \cOC(C\langle n\rangle )\lra i^{C\langle n\rangle }_*k\lra 0$$
where $ i^{C\langle n\rangle }: C\langle n\rangle \lra \T$ denotes the inclusion.
We see 
\[
0\simeq \Hom (f(i^{C\langle n\rangle }_*k), f(T))=\Hom_{C\langle n\rangle } (k, T|_{C\langle n\rangle})
\]
Hence $T|_{C\langle n\rangle }\simeq 0$. Since this covers all torsion points
$T\simeq 0$ as required. 
\end{proof}

\subsection{From algebra to algebraic geometry}
We now wish to observe that the category $\cA (\cOC)$ of diagram of sheaves
is equivalent to a  purely algebraic category $\cA (C)$ of modules 
over the diagram of rings. All results in this section apply equally well to periodic modules, but since the results are about abelian categories we will state them for the non-periodic version for brevity.

 We note that the diagram $\cOCfork$ consists of constant sheaves and
 skyscraper sheaves, so it is equivalent to the  diagram of
 rings 
$$\Gamma \cOCfork
=\left(
\vcenter{\xymatrix{
&\cK\ar[d]\\
\prod_n \Gamma \cO_{\Cn}^{\wedge} 
\ar[r] &\cK \tensor_{\cOC}\prod_n \Gamma \cO_{\Cn}^{\wedge} 
}}\right)$$
(in modules over the ground ring $K$) formed by taking global sections. 
 We let $\GcOCforkmod$ denote  the category of modules over $\Gamma \cOCfork$
 and we let $\cA (C)$ to be the category of these $\Gamma \cOCfork$-modules so that the horizontal and
 vertical maps are extensions of scalars. We again arrange this on a line for typographical reasons 
$$\prod_n \Gamma \cO_{\Cn}^{\wedge} \stackrel{l}\lra 
\cK \tensor_{\cOC}\prod_n \Gamma \cO_{\Cn}^{\wedge} 
\stackrel{\iota}\lla \cK. $$
We write  $\GcOCforkmod$ for the category of all diagrams 
$$\cN \lra \cP \lla \cV$$ 
of modules over $\Gamma \cOCfork$. The category  $\cA (C)$ consists of diagrams 
where the maps induce isomorphisms $l_*N\cong P$ and $\iota_*V\cong 
P$. 

\begin{lemma}
\label{lem:desheaf}
We have an equivalence of categories 
$$\mbox{$\cA (\cOC)$}\simeq \cA (C).  $$
\end{lemma}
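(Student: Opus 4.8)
The plan is to build the equivalence ``vertex by vertex'', replacing each sheaf of rings in $\cOCfork$ and each quasi-coherent sheaf of modules over it by the corresponding ring, resp.\ module, of global sections, and then checking that this respects the structure maps of the diagram and the isomorphism conditions cutting out $\cA(\cOC)$. I would first record the relevant geometry: $\cK$ is a constant sheaf on the irreducible space $C_{tp}$, while $\cOCFh\cong\prod_n\cOCnhat$ and $\cEi\cOCFh=\cK\tensor_{\cOC}\cOCFh$ are products of skyscraper sheaves, the $n$-th factor being supported on the finite set $\Cn$ of points of exact order $n$. Two elementary observations then give the vertexwise comparisons. \textbf{(i)} If $\cR$ is a product of skyscraper sheaves of rings at closed points of $C_{tp}$, then any quasi-coherent sheaf of $\cR$-modules has vanishing stalks off the supporting set, and, being supported on closed points, is the (product of) skyscraper(s) of its stalks; since $\Gamma$ computes those stalks, it is an equivalence from $\cR$-modules to $\Gamma\cR$-modules, inverse to the skyscraper functor. \textbf{(ii)} Since $\Gamma$ is exact on constant sheaves, a quasi-coherent sheaf $\cM$ of $\cK$-modules has a global presentation by free constant sheaves, which $\Gamma$ sends to a presentation of the $\Gamma\cK$-module $\Gamma\cM$; thus $\cM$ is the constant sheaf on $\Gamma\cM$, and $\Gamma$ is an equivalence from quasi-coherent $\cK$-modules to $\Gamma\cK$-modules. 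Applying \textbf{(i)} at the $\cOCFh$- and $\cEi\cOCFh$-vertices and \textbf{(ii)} at the $\cK$-vertex, together with the fact that $\Gamma$ commutes with products, gives equivalences of the module categories at each vertex.

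Next I would assemble these into an equivalence between the category of diagrams of quasi-coherent modules over $\cOCfork$ and $\GcOCforkmod$. The one thing to check is that $\Gamma$ intertwines the extension-of-scalars functors $l_*=\cEi\cOCFh\tensor_{\cOCFh}-$ and $\iota_*=\cEi\cOCFh\tensor_{\cK}-$ with the corresponding extensions of scalars along $\Gamma\cOCfork$: because all the sheaves in sight are constant sheaves and products of skyscrapers, these tensor products are computed stalkwise and are carried by $\Gamma$ to the algebraic tensor products of global sections. Consequently a diagram $\cN\to\cP\lla\cV$ satisfies $l_*\cN\cong\cP$ and $\iota_*\cV\cong\cP$ exactly when its image under $\Gamma$ satisfies the analogous conditions defining $\cA(C)$, so the equivalence of diagram categories restricts to the asserted $\cA(\cOC)\simeq\cA(C)$; the periodic variant follows verbatim since every functor in sight is grading-compatible.

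The step needing the most care is the identification of the module categories at the individual vertices --- the claim that every sheaf of modules in play is reconstructible from its global sections. At the skyscraper vertices this rests on the fact that a quasi-coherent sheaf supported set-theoretically on closed points is the skyscraper of its stalks; at the constant vertex it is precisely the quasi-coherence hypothesis that does the work, excluding pathological $\cK$-modules (such as extensions by zero off a proper open set) that have too few global sections. Once these identifications are secured, what remains is bookkeeping.
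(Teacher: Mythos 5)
Your overall strategy matches the paper's exactly: compare vertex by vertex, using global sections $\Gamma$, with a constant-sheaf argument at the $\cK$-vertex and a skyscraper argument at the other two, then assemble and restrict to the subcategories where the structure maps are extensions of scalars. This is the right plan.

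However, your step (i) contains a genuine error that the paper has to work around. You assert that a quasi-coherent sheaf of modules over a product $\cR = \prod_n i_n(\cdot)$ of skyscraper sheaves ``has vanishing stalks off the supporting set.'' This fails in the tp-topology once infinitely many torsion points are involved. Take a non-torsion point $x$: the open neighbourhoods of $x$ are $C\setminus F$ with $F$ a finite set of torsion points, and the sections of a product of skyscrapers over $C\setminus F$ are the product over torsion points not in $F$. Passing to the colimit over $F$ computes the stalk at $x$ to be the quotient of the full product by the direct sum --- which is generally nonzero. So the sheaf $\cOCFh = \prod_n\cOCnhat$ is \emph{not} the ``(product of) skyscraper(s) of its stalks at closed points'' with trivial stalks elsewhere, and $\Gamma$ is not simply inverse to a ``skyscraper-of-stalks'' functor. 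The paper's proof explicitly acknowledges that the finite-support analysis does not extend automatically to infinitely many points, and instead constructs by hand an inverse functor $\mathbf{t}$ from $\Gamma\cOCfork$-modules back to sheaves: on a $\prod_n\Gamma\cOCnhat$-module $\cM$ it defines the sections over the cofinal open sets $U(n_1,\ldots,n_t)=C\setminus\coprod_j C\langle n_j\rangle$ to be the idempotent quotients $\cM/e(n_1,\ldots,n_t)\cM$, and then verifies that $\mathbf{\Gamma}$ and $\mathbf{t}$ are mutually inverse. That construction, or something equivalent to it, is needed to close the gap in your (i). Your step (ii) (the constant-sheaf vertex), the intertwining of extension of scalars, and the restriction to $\cA(\cOC)\simeq\cA(C)$ are all fine and track the paper.
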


\begin{proof}
For clarity,  in this proof we will introduce notation to distinguish between sheaves and
global sections. 

At the generic point we note that the sheaf $\widetilde{\cK}$ of meromorphic
functions is constant at $\cK=\Gamma \widetilde{\cK}$. Similarly any quasi-coherent
module $\widetilde{\cM}$ over the sheaf $\widetilde{\cK}$ is also constant 
(at $\cM=\Gamma \widetilde{\cM}$). This means that passage to
sections gives an equivalence between sheaves of modules over $\widetilde{\cK}$
and modules over the ring $\cK$.

Next, if $\cF$ is any skyscraper sheaf of rings concentrated over a
finite set of points $x_1, \ldots, x_s$ passage to global sections gives an
isomorphism 
$$\Gamma \cF\stackrel{\cong}\lra \prod_{i=1}^s \cF_{x_i}$$
between the global sections and the product of the stalks.
Furthermore,   any $\cF$-module $\cN$ is a skyscraper over $x_1,
\ldots , x_s$, because $0=1$ in the stalk at any other
point. Next, we have an isomorphism 
$$\Gamma \cN\stackrel{\cong}\lra \prod_{i=1}^s \cN_{x_i}. $$
It follows that there is an equivalence of categories between sheaves of 
$\cF$-modules and modules over $\prod_{i=1}^s \cF_{x_i}$. Finally,
this is natural for maps of sheaves of rings concentrated over $x_1, \ldots  , x_s$.

Next, if $\widetilde{\cC}$ is constant at $\cC$ and $\cN$ is concentrated at $x_1, \ldots,
x_s$ then passage to global sections gives an isomorphism
$$\Gamma : \Hom(\widetilde{\cC}, \cN)\stackrel{\cong}\lra
\prod_{i=1}^s\Hom(\cC, \cN_{x_i}). $$

Finally we turn to the  question of dealing with an infinite
number of points. In our case we have the sheaf, $\cOCFhat$ which is
itself a product: $\cOCFhat=\prod_n \cOCnhat$. 
Since $\Gamma$ is lax monoidal, passage to sections therefore gives a functor 
\[
\Gamma : \cOCFhat \leftmod=
\left[ \prod_n\cOCnhat \right] \leftmod
\lra 
\left[ \prod_n\Gamma \cOCnhat \right] \leftmod. 
\]

Altogether, taking sections at all three points of the diagram gives a functor 
\[
\mathbf{\Gamma}: \cOCfork \leftmod \lra \Gamma\cOCfork \leftmod. 
\]

Next we define a functor in the other direction
\[
\mathbf{t}: \Gamma \cOCfork \leftmod \lra \cOCfork \leftmod.
\]
On the two right hand entries,  we use the constant sheaf functor. 
For the bottom left, we note that if $M$ is a module over 
$\prod_n\Gamma\cOCnhat$ we may define a sheaf $t\cM$ as
follows. The open sets are obtained by choosing a finite set $n_1,
\ldots , n_t$ of orders and taking 
$$U(n_1, \ldots , n_t)=C\setminus \coprod_{j}C\langle
n_j\rangle. $$
Associated to the numbers $n_1, \ldots , n_t$ there is an idempotent
$e(n_1, \ldots , n_t)$ in $\prod_n\Gamma\cOCnhat$  supported on the subgroups of these orders and 
we take 
$$t(\cM)(U(n_1, \ldots, n_t))=\cM/e(n_1, \ldots , n_j)\cM. $$

The map $\cM \lra \cP$ along the horizontal map in diagram defining $\Gamma \cOCfork$ gives a map
$t(\cM)\lra \widetilde{\cP}$, where $\widetilde{\cP}$ is the
constant sheaf.  Since $\Gamma$ and $t$ are inverse equivalences, so
are $\mathbf{\Gamma}$ and $\mathbf{t}$.
\end{proof}

\begin{remark}It is clear that the analogous statement works for differential objects and also for $\PcOC$ and $PC$ instead of $\cOC$ and $C$ respectively. Thus we have an equivalence of categories 
$$\mbox{$d\cA (\PcOC)$}\simeq d\cA (PC).  $$
\end{remark}

\section{The algebraic model of \texorpdfstring{$\T$}{T}-equivariant elliptic cohomology}

\subsection{The algebraic object of an elliptic curve with coordinates}
\label{subsec:ECa}
Coordinate data on our elliptic curve $C$ consists of functions $t_1, t_2, \ldots
$ with $t_n$ vanishing to the first order on the points of exact order
$n$. We also let $Dt$ denote the regular differential agreeing with
$dt_1$ at $e$.

 This data allows us to write down an object $EC_a$ in the algebraic model for rational $SO(2)$-spectra. Indeed we
will define 
$$EC_a=(NC \stackrel{\beta}\lra \cEi \cOcF\tensor VC)$$
where $\beta$ is injective. We take 
$$VC=P\cK$$
to  consist of the meromorphic functions made periodic, and $NC$ to be the following
module of regular functions: 
$$NC=\ker \left[ \cOcF \tensor VC  \stackrel{p}\lra \bigoplus_{n\geq 1}H^1_{C\langle n
  \rangle}(C; \PcOC) \right]$$
where the principal part map $p$ has $n$th component 
$$p(c^v\tensor f)=\overline{  \left( \frac{t_n}{Dt}\right)^{v(n)}f
},   $$
where $v: \cF \lra \Z$ is a function which is zero almost everywhere (it is
checked in \cite[Lemma 10.8]{gre05} that this determines the map $p$). 

\begin{remark}
For $i=0, 1$ we have 
$$EC_{-i}^{\T}(S^V)=H^i(C;\cO (D(V)). $$
The complete proof is given in \cite[Section 10]{gre05}, but the $i=0$ part of the result
is immediate from the definition. Indeed,  $c^v\tensor f$ lies in the
kernel of $p$ if, for every point $Q$ of finite order $n$,  the pole
of $f$  at $Q$ is of order $\leq v(n)$ (i.e., $\nu_Q(f)+v(n)\geq 0$):
$$\ker (p|_{c^v\tensor \cK})=\Gamma (\cO (D(v))), $$
where 
$$D(v)=\sum_nv(n)C\langle n \rangle.$$ 
\end{remark}

\subsection{Multiplicativity of \texorpdfstring{$EC_a$}{ECa}}

The product of two meromorphic functions is meromorphic. 
Indeed if $f$ and $g$ have  poles at points only of finite order
the product $fg$ has the same property. Hence $\cK$ is a commutative
ring and $P\cK$ and $\cEi\cOcF\tensor P\cK$ are commutative graded
rings in even degrees. 

\begin{lemma}  (\cite[Theorem 11.1]{gre05})
Multiplication of meromorphic functions defines a 
 map $EC_a\tensor EC_a \lra EC_a$ making $EC_a$ into a commutative
 monoid in $d\cA (\T)$. 
\end{lemma}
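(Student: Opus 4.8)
The plan is to verify directly that the multiplication-of-functions map respects the structure maps of the two objects involved, so that it defines a morphism in $d\cA(\T)$, and then to check that the resulting multiplication is associative, unital and commutative (the last being immediate since everything happens in even degrees). Recall that an object of $d\cA(\T)$ is a diagram $(N \stackrel{\beta}\lra \cEi\cOcF\tensor V)$ with $\beta$ injective, and that $EC_a = (NC \stackrel{\beta}\lra \cEi\cOcF\tensor VC)$ with $VC = P\cK$ and $NC = \ker(p)$ where $p$ is the principal part map. So I would begin by recording the ring structures already identified in the paragraph preceding the statement: $\cK$ is a commutative ring, hence $P\cK$ and $\cEi\cOcF\tensor P\cK$ are commutative graded rings concentrated in even degrees, and multiplication gives maps $VC\tensor VC\lra VC$ and $(\cEi\cOcF\tensor VC)\tensor(\cEi\cOcF\tensor VC)\lra \cEi\cOcF\tensor VC$ (using commutativity of $\cOcF$ to interleave the tensor factors).

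The heart of the argument is to show that multiplication carries $NC\tensor NC$ into $NC$, i.e. that the kernel of the principal part map $p$ is closed under multiplication, so that we get an induced map $NC\tensor NC\lra NC$ compatible with $\beta\tensor\beta$ and $\beta$. For this I would work one prime power at a time, using the product splitting of the target $\bigoplus_{n\geq 1}H^1_{C\langle n\rangle}(C;\PcOC)$ and the formula $p(c^v\tensor f) = \overline{(t_n/Dt)^{v(n)}f}$ for the $n$th component. Given $c^v\tensor f$ and $c^w\tensor g$ in $NC$, their product is $c^{v+w}\tensor fg$, and the $n$th component of $p$ applied to it is $\overline{(t_n/Dt)^{v(n)+w(n)}fg}$. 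The point is a local valuation estimate at each point $Q$ of exact order $n$: since $c^v\tensor f\in\ker p$ we have $\nu_Q(f)+v(n)\geq 0$ and similarly $\nu_Q(g)+w(n)\geq 0$, so $\nu_Q(fg)+v(n)+w(n)\geq 0$, which says exactly that $(t_n/Dt)^{v(n)+w(n)}fg$ is regular at $Q$, hence maps to zero in $H^1_{C\langle n\rangle}$. (Equivalently, in the language of the remark, $\ker(p|_{c^v\tensor\cK}) = \Gamma(\cO(D(v)))$, and $\cO(D(v))\cdot\cO(D(w))\subseteq \cO(D(v+w))$ since $D(v)+D(w) = D(v+w)$.) This shows $NC$ is a subring of $\cEi\cOcF\tensor VC$, and since $\beta$ is just the inclusion, compatibility of the multiplications with $\beta$ is automatic.

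Having produced the pairing $EC_a\tensor EC_a\lra EC_a$ as a map of diagrams, the remaining verifications are formal: associativity and commutativity are inherited from the corresponding properties of multiplication in $\cEi\cOcF\tensor P\cK$ (restricting to the subobject $NC$), and the unit is the constant function $1$, which lies in $NC$ because $D(0) = 0$ and $1\in\Gamma(\cOC) = \Gamma(\cO(D(0)))$. I expect the main obstacle to be purely bookkeeping: checking that the identification $\cEi\cOcF\tensor(VC\tensor VC)\cong(\cEi\cOcF\tensor VC)\tensor_{\cEi\cOcF}(\cEi\cOcF\tensor VC)$ used to define the product on the big object, together with the module-theoretic definition of $\tensor$ in $d\cA(\T)$, really does restrict to the naive pointwise multiplication of functions on $NC$; once the local valuation estimate above is in hand, there is no genuine difficulty.
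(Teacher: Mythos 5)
The paper itself does not prove this lemma: it just records the observation that $\cK$, $P\cK$ and $\cEi\cOcF\tensor P\cK$ are commutative rings (the paragraph you quote), and then cites \cite[Theorem 11.1]{gre05} for the statement. So there is no internal proof to compare against, and your write-up is a genuine attempt to fill in the argument behind the citation.

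Your approach --- observe that everything lives in even degrees, that the outer pieces $VC=P\cK$ and $\cEi\cOcF\tensor VC$ are visibly commutative rings, and that the only real content is the closure of $NC=\ker p$ under multiplication, which you reduce to the valuation inequality $\nu_Q(fg)+v(n)+w(n)\ge\nu_Q(f)+v(n)+\nu_Q(g)+w(n)\ge 0$, or equivalently $\cO(D(v))\cdot\cO(D(w))\subseteq\cO(D(v)+D(w))$ --- is exactly the right one, and the unit and associativity observations are correct. One small point worth tightening: you verify closure of $\ker p$ on pure tensors $c^v\tensor f$ with $v$ a monomial exponent, but a general element of $\cOcF\tensor VC$ is a finite sum $\sum_i a_i\tensor f_i$ with $a_i\in\prod_n\Q[c]$ arbitrary polynomials, and for such sums the pole orders of the individual summands tell you nothing directly. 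The clean way to close this gap is to note that, for each $n$, the $n$th component of $p$ factors as the composite
\[
\cOcF\tensor P\cK \xrightarrow{\ \pi_n\tensor\id\ } \Q[c]\tensor P\cK \xrightarrow{\ c\mapsto t_n/Dt\ } P\cK \longrightarrow H^1_{\Cn}(C;\PcOC),
\]
where the first two arrows are ring homomorphisms and the last is the quotient by the subring of functions regular near $\Cn$. Thus $\ker p_n$ is the preimage of a subring under a ring map, hence a subring, and $\ker p=\bigcap_n\ker p_n$ is a subring; your pure-tensor valuation estimate is precisely the underlying local reason that the composite is multiplicative modulo regular functions. With that remark inserted, the proof is complete, and your closing comment about the bookkeeping of the $\tensor$ in $d\cA(\T)$ versus pointwise multiplication is also apt and correct.
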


\subsection{From modules over \texorpdfstring{$EC_a$}{ECa} to modules 
over \texorpdfstring{$\Gamma \cOCfork$}{C}. }
The connection between differential $EC_a$-modules and $d\cA (PC)$ is entirely
elementary.

\begin{lemma}
\label{lem:ECamodcAPC}
We have an equivalence of abelian categories 
$$EC_a\leftmodin\cA (\T)\simeq \cA (PC). $$
\end{lemma}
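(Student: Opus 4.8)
The statement to prove is the equivalence of abelian categories $EC_a\leftmodin\cA(\T)\simeq \cA(PC)$. The strategy is entirely elementary bookkeeping: unwind both sides as diagrams of modules and match the data. Recall that $EC_a=(NC\stackrel{\beta}\lra \cEi\cOcF\tensor VC)$ with $VC=P\cK$, so that as a commutative monoid in $\cA(\T)$ it is the triple $(NC, \cEi\cOcF\tensor VC, VC)$ in the torsion-subcategory sense of Section \ref{sec:algModTsp}. The first step is to spell out what an $EC_a$-module in $\cA(\T)$ is: an object $(N\lra \cEi\cOcF\tensor V)$ of $\cA(\T)$ together with a compatible action, which by the diagram structure amounts to an $NC$-module $N$, a module $V$ over the ring $VC=P\cK$, and a module over $\cEi\cOcF\tensor VC$ in the middle, with the structure maps being maps of modules over the respective rings and inducing isomorphisms $l_*N\cong P$ and $\iota_*V\cong P$ after extension of scalars.

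**Key steps.** First I would observe that, because $NC\subseteq \cOcF\tensor P\cK$ and $NC$ is the kernel of the principal part map $p$, an $NC$-module structure is governed by the ring $NC$, which is exactly (the global sections over the torsion point topology of) the periodic structure sheaf: that is, $NC\cong P\Gamma\cOC^{\wedge}_{\cF}$-style data matching the bottom-left vertex $\prod_n P\Gamma\cO_{\Cn}^{\wedge}$ of $\Gamma P\cOC^{\lrcorner}$. More precisely, the defining kernel description of $NC$ identifies it with the ring appearing in the diagram $\Gamma\PcOC^{\lrcorner}$ whose module category is $\cA(PC)$ by Lemma \ref{lem:desheaf} and the remark following it. Second, I would match the generic vertex: $VC=P\cK$ is literally the ring $\cK$ made periodic, which is the right-hand vertex of $\Gamma\PcOC^{\lrcorner}$, so modules over $VC$ are precisely $P\cK$-modules, the right-hand entry of $\cA(PC)$. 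Third, the middle vertex $\cEi\cOcF\tensor VC = \cEi\cOcF\tensor P\cK$ is identified with $\cK\tensor_{\cOC}\prod_n P\Gamma\cO_{\Cn}^{\wedge}$ via the computation $\cEi\cOcF=\colim_{V^{\T}=0}\Sigma^V\cOcF$ together with the fact that inverting the Euler classes $\cE$ corresponds to localising the completed structure sheaf, i.e. tensoring up to $\cK$. Once the three rings are matched, an $EC_a$-module in $\cA(\T)$ — a triple $(N, \text{middle}, V)$ with structure maps of the appropriate module type satisfying the $\cA(\T)$ isomorphism conditions — is visibly the same data as an object of $\cA(PC)$, and this correspondence is manifestly functorial in both directions and mutually inverse. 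I would package this as: the monoid $EC_a$ in the diagram category $\bbSfork_a\leftdmod$ (restricted to the torsion subcategory $\cA(\T)$) is, vertex by vertex, the diagram of rings $\Gamma\PcOC^{\lrcorner}$, whence modules correspond.

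**Main obstacle.** The only genuine content — the rest is definition-chasing — is the identification of the rings at the three vertices, and in particular checking that the bottom-left ring $NC$ really is $\prod_n P\Gamma\cO_{\Cn}^{\wedge}$ rather than something merely quasi-isomorphic to it. This rests on the description of $NC$ as $\ker[\cOcF\tensor VC \stackrel{p}\lra \bigoplus_{n\geq 1}H^1_{\Cn}(C;\PcOC)]$: one must see that, componentwise in $n$, this kernel is exactly $P\Gamma\cO_{\Cn}^{\wedge}$ (the periodic global sections of the completion of the local ring at points of order $n$), using that the principal part at $\Cn$ detects precisely the obstruction to a meromorphic function being regular at those points, i.e. to lying in the completed local ring. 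This is essentially the $i=0$ case of the remark in Subsection \ref{subsec:ECa} (the identification $\ker(p|_{c^v\tensor\cK})=\Gamma(\cO(D(v)))$), reorganised to exhibit the ring rather than individual sections. Everything else — the module-category equivalences at the generic and middle vertices, the compatibility of the structure maps, and the isomorphism conditions defining $\cA(\T)$ matching those defining $\cA(PC)$ — is routine once this identification is in place, and the functors realising the equivalence are the obvious ones (pass from a module over $EC_a$ to the diagram of its vertices, and conversely assemble a diagram over $\Gamma\PcOC^{\lrcorner}$ into an $EC_a$-module).
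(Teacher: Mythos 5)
The proposal rests on a claim that I don't believe is correct, and that is definitely not what the paper's proof does: namely, that the ring $NC$ is \emph{isomorphic} to $\prod_n P\Gamma\cO_{\Cn}^{\wedge}$, i.e.\ to the nub vertex of $\Gamma \PcOCfork$. These are different rings. The ring $NC$ is a subring of $\cOcF\tensor P\cK$ consisting of elements $c^v\tensor f$ with the pole of $f$ bounded by $v$ at each torsion point; each coordinate involves a \emph{polynomial} variable $c$, and the regularity constraint makes $NC$ much smaller than a product of completed local rings. By contrast $\prod_n P\Gamma\cO_{\Cn}^{\wedge}$ is a full product over all $n$ of (periodic versions of) completed local rings at the finitely many points of order $n$, which is enormous. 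In particular a map $NC\lra\prod_n P\Gamma\cO_{\Cn}^{\wedge}$ determined by the coordinates $t_n/Dt$ does exist, but it is far from surjective, so the vertex-by-vertex ring matching you propose is not available. Because you explicitly flag the ring identification as ``the only genuine content'' and then call the rest definition-chasing, this is not a minor imprecision but the load-bearing step of your argument.

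The paper's proof works differently and does not need any such ring isomorphism. Given an $EC_a$-module $X=(N\lra\tF\tensor V)$, the action map $EC_a\tensor X\lra X$ directly makes $V$ a $P\cK$-module and $N$ an $NC$-module, and the remaining content is that the Euler classes $e(V)(n)=(t_n/Dt)^{d_n}$ (with $d_n=\dim_\C V^{\T[n]}$) furnished by $EC_a$ are precisely the data needed to convert the $c$-adic picture over $\cOcF$ into the geometric picture over the completed local rings. That is, the correspondence is a translation dictionary that uses the coordinates $t_n/Dt$ as the bridge between the abstract variable $c$ and the local parameter at $\Cn$, applied to the \emph{constrained} subcategories (torsion over the nub, generic fibre at the vertex, with the gluing forced by $\cEi N\cong\tF\tensor V$). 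This is why the equivalence of module categories holds even though the underlying diagram rings are not isomorphic. Your step identifying the generic vertex $VC=P\cK$ with $\cK$ is of course fine, but the nub and middle vertex are not matched by ring isomorphisms, and your proposal supplies no replacement for the Euler-class bookkeeping that does the real work.
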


\begin{proof}
This is an immediate reformulation: we just need to make it apparent
how the same structures are expressed in the two categories. 

We have $EC_a=(NC\lra \tF\tensor P\cK)$ so that if $X=(N \lra \tF\tensor
V)$ is a module over $EC_a$ we have a map 
$$EC_a\tensor X=(NC\tensor_{\cOcF}N \lra \tF\tensor P\cK\tensor V)
\lra (N \lra \tF\tensor V) =X. $$
This makes $V$ into a $P\cK$-module, $N$ into an $NC$-module, and the
Euler classes $e(V)$
 come from $EC_a$ (locally $e(V)(n)=(t_n/Dt)^{d_n}$,
where $d_n=\dim_{\C}(V^{\T[n]})$). 
For definition and detailed discussion of Euler classes in case of a circle group see \cite{gre99}.
\end{proof}

This lemma clearly extends to the level of differential objects in both abelian categories and thus finishes the proof of Theorem \ref{cor:main}.

\bibliographystyle{alpha}
\bibliography{somebib}
\end{document}